\documentclass[11pt]{article}

\usepackage[english]{babel}
\usepackage[utf8x]{inputenc}
\usepackage[colorinlistoftodos]{todonotes}
\usepackage{float}

\usepackage{amsmath,amssymb,amsthm,cleveref}
\usepackage{graphicx}
\usepackage{mathrsfs}

\renewcommand{\baselinestretch}{1.25}

\newtheorem{theorem}{Theorem}[section]
\newtheorem{lemma}[theorem]{Lemma}
\newtheorem{proposition}[theorem]{Proposition}
\newtheorem{corollary}[theorem]{Corollary}
\newtheorem{claim}[theorem]{Claim}
\newtheorem{observation}[theorem]{Observation}
\newtheorem{conjecture}[theorem]{Conjecture}

\theoremstyle{definition}
\newtheorem{definition}[theorem]{Definition}
\newtheorem{example}[theorem]{Example}
\theoremstyle{remark}
\newtheorem{remark}[theorem]{Remark}

\numberwithin{equation}{section}

 \newcommand{\To}{\longrightarrow}
 \newcommand{\Map}[3]{#1\, :\, #2\To #3}

 \newcommand{\Nat}{\mathbb{N}}

 \newcommand{\set}[1]{\left\{#1\right\}}
 \newcommand{\Set}[2]{\set{#1\, :\, #2}}

\usepackage{url}

\title{The Path Partition Conjecture and 3-(quasi-)Transitive Digraphs}

\author{Shamik Ghosh\thanks{Department of Mathematics, Jadavpur University, 
        Kolkata, India. Corresponding author\\ (email: ghoshshamik@yahoo.com) 
    }\and Jing Huang
\thanks{Department of Mathematics and Statistics,
University of Victoria, Victoria, B.C., Canada.\\ (email: huangj@uvic.ca). Research
supported by NSERC}}
\date{}

\begin{document}

\maketitle
% ----------------------------------------------------------------
\begin{abstract}
A digraph $D = (V,E)$ is 3-transitive (respectively, 3-quasi-transitive) if whenever
$(u,v,w,z)$ is a directed path of length 3, $(u,z) \in E$ (respectively, $(u,z) \in E$
or $(z,u) \in E$). These digraphs extend the concepts of 
transitive and quasi-transitive digraphs respectively. For 3-transitive and 3-quasi-transitive digraphs, only the strong case has been characterized. 

In this paper we introduce the notion of semi-acyclic digraphs and use it to describe
completely the structure of 3-transitive and 3-quasi-transitive digraphs. 
 As an application of this type of structure description, we prove that the path partition conjecture holds 
for the class of extended semi-acyclic digraphs, which generalizes two previously known 
results on the conjecture.\\
\noindent \textbf{Keywords:} 3-transitive digraph, 3-quasi-transitive digraph, 
semi-acyclic digraph, structure characterization, path partition conjecture
\end{abstract}

\section{Introduction}

A digraph $D = (V,E)$ is {\em transitive} (respectively, {\em quasi-transitive}) 
if whenever $(x, y, z)$ is a directed path of length 2, $(x,z) \in E$ (respectively, 
$(x,z) \in E$ or $(z,x) \in E$). Extending the notions of transitive and quasi-transitive digraphs, Hern\'andez-Cruz 
\cite{HC} and Bang-Jensen \cite{JBJ} introduced 3-transitive and 3-quasi-transitive 
digraphs. A digraph $D = (V,E)$ is called {\em 3-transitive} (respectively, 
{\em 3-quasi-transitive}) if whenever $(u,v,w,z)$ is a directed path of length 3, 
$(u,z) \in E$ (respectively, $(u,z) \in E$ or $(z,u) \in E$). It is worth noting that
every transitive digraph is 3-transitive but not all quasi-transitive digraphs are 
3-quasi-transitive. 3-transitive digraphs are also studied by the name of bitransitive graphs in the context of bipartite digraphs \cite{DGGS}.

The purposes of this paper are two fold. First, we provide structural descriptions of 
3-transitive and 3-quasi-transitive digraphs. These descriptions characterize these
two classes of digraphs in terms of a common structure called semi-acyclic digraphs.
Previously only strong 3-transitive and strong 3-quasi-transitive digraphs were 
characterized. Second, we prove that the path partition conjecture which is widely open
for general digraphs holds for extended semi-acyclic digraphs. Extended semi-acyclic 
digraphs strictly contain the classes of extended acyclic digraphs and 
3-quasi-transitive digraphs (and hence 3-transitive digraphs). Thus, our result 
generalizes the previously known results which assert that the path partition conjecture
holds for extended acyclic digraphs and 3-quasi-transitive digraphs.  

Transitive and quasi-transitive digraphs can be obtained from semicomplete and acyclic
transitive digraphs by substitutions.
Let $H$ be a digraph on vertices $v_1, v_2, \dots, v_r$ and let $S_1, S_2, \dots, S_r$ 
be (vertex-disjoint) digraphs. We use $H[S_1, S_2, \dots, S_r]$ to denote the digraph 
$D$ with vertex set 
\[V(S_1) \cup V(S_2) \cup \cdots \cup V(S_r)\]
and arc set
\[\bigcup\limits_{i=1}^r E(S_i) \bigcup \{(u,v):\ u \in V(S_i), v \in V(S_j), 
    (v_i,v_j) \in E(H)\}.\]
That is, $D$ is obtained from $H$ by {\em substituting} $S_1, S_2, \dots, S_r$ for 
the vertices $v_1, v_2, \dots, v_r$.

\begin{theorem} \label{2-trans}
Let $D$ be a transitive digraph. Then $D = H[S_1, S_2, \dots, S_r]$ where $H$ is 
an acyclic transitive digraph and $S_i$ is a complete digraph for each 
$i = 1, 2, \dots, r$.
\end{theorem}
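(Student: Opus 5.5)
The plan is to take the strong components of $D$ as the substituted digraphs $S_i$ and the condensation of $D$ as $H$. Let $S_1, S_2, \dots, S_r$ be the subdigraphs of $D$ induced by its strong components, and let $H$ be the digraph on $v_1, \dots, v_r$ with $(v_i,v_j) \in E(H)$ for $i \neq j$ exactly when $D$ has some arc from $V(S_i)$ to $V(S_j)$. Then $H$ is acyclic, as is every condensation. The proof then reduces to three claims: each $S_i$ is complete; $H$ is transitive; and $D = H[S_1, \dots, S_r]$, meaning that whenever one arc goes from $S_i$ to $S_j$, every arc from $S_i$ to $S_j$ is present.

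\emph{Each $S_i$ is complete.} We first note that in a transitive digraph, every directed path $(x_0, x_1, \dots, x_k)$ yields the arc $(x_0, x_k)$. This follows by induction on $k$, applying transitivity to $(x_0, x_{k-1}, x_k)$. Now take distinct $u, v$ in the same strong component. There is a directed path from $u$ to $v$, so $(u,v) \in E$. Hence $S_i$ is complete. Loops do not arise, since we work with digraphs without loops.

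\emph{Arcs between components are complete.} Suppose $(u,v) \in E$ with $u \in V(S_i)$, $v \in V(S_j)$ and $i \neq j$. Let $u' \in V(S_i)$ and $v' \in V(S_j)$ be arbitrary. Then $(u', \dots, u, v, \dots, v')$ is a directed walk, using $(u',u)$ if $u' \neq u$ and $(v,v')$ if $v' \neq v$, both supplied by completeness. The vertices along it are distinct, because the components are disjoint. So it is a path, and the path observation gives $(u',v') \in E$. Consequently the arcs of $D$ are exactly the arcs inside the $S_i$ together with all pairs $(u,v)$ such that $(v_i,v_j) \in E(H)$, which is precisely $D = H[S_1, \dots, S_r]$.

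\emph{$H$ is transitive.} If $(v_i,v_j), (v_j,v_k) \in E(H)$, pick $u \in S_i$, $w \in S_j$, $z \in S_k$. By the previous step $(u,w), (w,z) \in E$. Transitivity of $D$ then gives $(u,z) \in E$. We have $i \neq k$ because $H$ is acyclic, so $(v_i,v_k) \in E(H)$. There is no real obstacle in this argument. The only points needing care are using distinct vertices so the path observation applies, and checking that $i \neq k$ via acyclicity.
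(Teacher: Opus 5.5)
Your proof is correct: the path observation, the fact that arcs between two strong components are all present once one is, the acyclicity and transitivity of the condensation, and the distinctness checks needed to apply transitivity are all handled properly. The paper states this theorem as known background without proof, but your argument follows the same template the paper uses for its 3-transitive analogue (Theorem~\ref{3-trans}, via the skeleton $D^\#$, Lemma~\ref{lemhash} and Lemma~\ref{sa1}); it is simpler here because transitivity makes every strong component complete, so no component has to be split into two parts.
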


\begin{theorem} \cite{BH} \label{2-quasi-trans}
Let $D$ be a quasi-transitive digraph. 
\begin{itemize}
\item If $D$ is non-strong, then $D = H[S_1, S_2, \dots, S_r]$ where $r \geq 2$, $H$ is 
an acyclic transitive digraph, and $S_i$ is a strong quasi-transitive digraph for each
$i = 1, 2, \dots, r$.
\item If $D$ is strong, then $D = H[S_1, S_2, \dots, S_r]$ where $H$ is a strong
semicomplete digraph and $S_i$ is either a vertex or a non-strong quasi-transitive
digraph for each $i = 1, 2, \dots, r$.
\end{itemize}
\end{theorem}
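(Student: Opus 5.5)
The plan is to treat the two cases separately. Both rely on one basic propagation fact: if $x \to y \to z$ in a quasi-transitive digraph, then $x$ and $z$ are adjacent. Throughout I use that every induced subdigraph of a quasi-transitive digraph is again quasi-transitive.

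\emph{Non-strong case.} Let $S_1,\dots,S_r$ be the strong components of $D$; then $r\ge 2$ and each $S_i$ is strong and quasi-transitive. First I show that an arc between two components forces all arcs in that direction. Suppose $u\in S_i$, $v\in S_j$, $i\ne j$, and $(u,v)\in E$. If $(v,v')\in E(S_j)$, then $u\to v\to v'$ makes $u,v'$ adjacent. The arc $(v',u)$ is impossible, since it would merge $S_i$ and $S_j$, so $(u,v')\in E$. Because $S_j$ is strong, every vertex of $S_j$ is reachable from $v$ inside $S_j$, and induction along such paths gives $u\Rightarrow S_j$. Symmetrically, for an in-neighbour $u'$ of $u$ in $S_i$, the path $u'\to u\to v$ forces $(u',v)$, because $(v,u')$ is again impossible. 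Hence $S_i\Rightarrow S_j$.

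Now let $H$ be the condensation, with $(v_i,v_j)\in E(H)$ iff some arc goes from $S_i$ to $S_j$. Then $D=H[S_1,\dots,S_r]$ and $H$ is acyclic. $H$ is also transitive: if $S_i\Rightarrow S_j\Rightarrow S_k$, pick $x\to y\to z$ through the three components. Then $x,z$ are adjacent, and $(z,x)$ would create a cycle in $H$, so $(x,z)\in E$ and hence $(v_i,v_k)\in E(H)$.

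\emph{Strong case.} The key ingredient is a lemma: if $D$ is strong quasi-transitive with at least two vertices, then the complement of the underlying graph $U(D)$ is disconnected. Assuming this lemma, let $Q_1,\dots,Q_r$ ($r\ge2$) be the vertex sets of the components of $\overline{U(D)}$. Vertices in different $Q_i$ are adjacent in $D$. I then show the arcs between $Q_i$ and $Q_j$ are uniform, using nonadjacent pairs (edges of $\overline{U(D)}$).
\begin{itemize}
\item Let $x\in Q_i$ and let $y,y'\in Q_j$ be nonadjacent. If $(x,y)\in E$ and $(y',x)\in E$, then $y'\to x\to y$ forces $y,y'$ adjacent, a contradiction. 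So $x$ relates to $y$ and $y'$ in the same direction.
\item Connectivity of $Q_j$ in $\overline{U(D)}$ propagates this, and the symmetric argument works inside $Q_i$. So all arcs between $Q_i$ and $Q_j$ go one way.
\item If $|Q_j|\ge2$, a $2$-cycle $x\leftrightarrow y$ is excluded: take $y'$ nonadjacent to $y$. Whichever way the arc between $x$ and $y'$ goes, one of $y'\to x\to y$ or $y\to x\to y'$ forces $y,y'$ adjacent. Two singleton classes may share a $2$-cycle, which a semicomplete $H$ allows.
\end{itemize}
Thus $D=H[D\langle Q_1\rangle,\dots,D\langle Q_r\rangle]$ with $H$ semicomplete. $H$ is strong because $D$ is strong and $D$ maps onto $H$. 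Each $D\langle Q_i\rangle$ with $|Q_i|\ge2$ has connected complement of its underlying graph, so by the lemma it is not strong.

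\emph{The lemma (main obstacle).} I would prove it by induction on $|V(D)|$, using the non-strong case already established. Pick a vertex $v$ such that $D-v$ is not strong; if every $D-v$ is strong, take instead a minimal strong subdigraph or a longest-cycle argument to find such a vertex. Write $D-v=H'[T_1,\dots,T_s]$ with $H'$ acyclic transitive. Strong connectivity of $D$ forces $v$ to have an in-neighbour in each terminal $T$ and an out-neighbour in each initial $T$. Applying the propagation fact through the complete joins $T_a\Rightarrow T_b$, I would aim to show that $v$ is adjacent to every vertex of $D-v$. I would also aim to show that an initial and a terminal component are completely joined. The second claim should give a partition of $V(D)$ into two nonempty sets with every cross pair adjacent, so that $\overline{U(D)}$ is disconnected. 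Controlling $v$'s relations to the middle components $T_b$ is where I expect the real work. My first attempt would be a minimal counterexample, with repeated use of $x\to y\to z \Rightarrow x\sim z$ along paths from $v$ back to $v$.
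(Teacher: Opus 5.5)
The paper gives no proof of this statement (it is quoted from Bang-Jensen and Huang), so your proposal has to stand on its own. Your non-strong case is complete and correct. So is your reduction of the strong case to the lemma that $\overline{U(D)}$ is disconnected whenever $D$ is strong quasi-transitive with $|V(D)|\ge 2$: the uniform-arc argument over the components $Q_i$ of $\overline{U(D)}$, the exclusion of $2$-cycles when $|Q_i|\ge 2$ or $|Q_j|\ge 2$, strongness of the quotient, and non-strongness of each $D[Q_i]$ with $|Q_i|\ge 2$ all check out.

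\textbf{The gap.} That lemma carries the whole strong case, and you only plan it. The plan's case split also cannot work as stated. You insist on a vertex $v$ with $D-v$ non-strong, and when every $D-v$ is strong you propose to ``find such a vertex'' by other means. But such a vertex need not exist. Take the digraph obtained from a directed triangle by substituting two independent vertices for each vertex: every $D-v$ is strong, and no vertex is adjacent to all others. So no auxiliary argument can produce the vertex you want. Your second claim, that initial and terminal components are completely joined, also does not yield a partition of $V(D)$ with all cross pairs adjacent. The partition you actually want is $\{v\}$ versus $V(D)\setminus\{v\}$, and your first claim alone gives it.

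\textbf{The missing idea.} Induct on $|V(D)|$, take an \emph{arbitrary} vertex $v$ when $|V(D)|\ge 3$, and apply the induction hypothesis to $D-v$ whenever it is strong. The base case $|V(D)|=2$ is a $2$-cycle, whose complement has no edges.

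\emph{Case $D-v$ strong.} Induction plus your uniform-arc argument give $D-v=S'[Q_1,\dots,Q_r]$ with $r\ge 2$ and $S'$ strong semicomplete. Take an out-neighbour $u\in Q_j$ of $v$ and an out-neighbour $Q_k$ of $Q_j$ in $S'$. Then $v\to u\to w$ makes $v$ adjacent to every $w\in Q_k$. So $Q_k$ is completely adjacent to $V(D)\setminus Q_k$, and $\overline{U(D)}$ is disconnected.

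\emph{Case $D-v$ not strong.} Write $D-v=H'[T_1,\dots,T_s]$ as in your first part. Strongness of $D$ gives an arc from $v$ into every source component of $H'$, and an arc from every sink component to $v$.
If $T_a$ is not a source, transitivity of $H'$ gives a source $T_I$ with $T_I\longmapsto T_a$. Then $v\to u\to w$ with $u\in T_I$ shows that $v$ is adjacent to all of $T_a$. The case where $T_a$ is not a sink is dual. So the middle components, which you expected to be the real work, are immediate.
Finally, $T_a$ cannot be isolated in $H'$. Since $s\ge 2$, there is a source $T_c\ne T_a$, and $x\to v\to y$ with $x\in T_a$, $y\in T_c$ would force an arc between $T_a$ and $T_c$.
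Hence $v$ is adjacent to every other vertex, and $\{v\}$ is a component of $\overline{U(D)}$.
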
 

The descriptions of transitive and quasi-transitive digraphs as shown in the theorems 
above are particularly useful in the design of efficient algorithms for solving 
combinatorial problems (e.g., the Hamiltonian cycle problem \cite{BH}). Moreover, these digraphs are interesting because of their relation to comparability graphs and partially ordered sets. The underlying graphs of transitive 
digraphs and quasi-transitive digraphs are {\em comparability graphs} \cite{gallai,ghouila,HH-lex,H}. 

In this paper, we provide structure descriptions analogous to Theorems~\ref{2-trans} and 
\ref{2-quasi-trans} for 3-transitive and 3-quasi-transitive digraphs. Our descriptions 
use a common structure, namely, semi-acyclic digraphs (defined below), and have potential
for solving combinatorial problems for these two classes of digraphs. 

\begin{definition}
A digraph is called {\em semi-acyclic} if it has no directed cycle of length other than 
$2$ and the $2$-cycles are pairwise vertex-disjoint. For convenience we use 
$\langle x,y\rangle$ to denote two cycle with endpoints $x, y$ (i.e., $(x,y), (y,x)$ 
are {\em symmetric} arcs). 
\end{definition}

Let $H$ be a semi-acyclic digraph on vertex set 
$\{x_i, y_i: i \in I\} \cup \{z_j: j \in J\}$
where $\langle x_i, y_i\rangle$, $i \in I$ are the 2-cycles in $H$.
For each $i \in I$, let $T_i$ be a bipartite digraphs with bipartition $(X_i,Y_i)$.
For each $j \in J$, let $Z_j$ be a digraph.
We say that $X_i, Y_i, Z_j$ correspond to $x_i, y_i, z_j$ respectively.
Denote $\mathscr{X} = \{X_i: i \in I\}$, $\mathscr{Y} = \{Y_i: i \in I\}$ and
$\mathscr{Z} = \{Z_j: j \in J\}$. 
We use $H[\mathscr{X},\mathscr{Y},\mathscr{Z}]$ to denote the digraph whose vertex set
is
\[\bigcup\limits_{i\in I} (X_i \cup Y_i) \bigcup\limits_{j\in J} Z_j.\]
The arc set of $D$ consists of all arcs in
\[\bigcup\limits_{i \in I} E(T_i) \bigcup\limits_{j \in J} E(Z_j)\]
and all arcs defined below: If $(u,v)$ is a non-symmetric arc in $H$ where $u, v$ 
correspond $M, N \in \mathscr{X} \cup \mathscr{Y} \cup \mathscr{Z}$ respectively, 
then all arcs from $M$ and $N$ are present in $D$. We say that $D$ is obtained from $H$ 
by {\em substituting} bipartite digraphs $T_i = (X_i,Y_i)$ (for the 2-cycles) and 
digraphs $Z_j$ (for the vertices not on 2-cycles).

We will prove the following two theorems:

\begin{theorem} \label{main1}
Let $D$ be a 3-transitive digraph. Then $D$ is obtained from a 3-transitive semi-acyclic
digraph $H$ by substituting complete bipartite digraphs for the 2-cycles and complete 
digraphs or directed triangles with at most two symmetric arcs for the vertices not
on 2-cycles.
\end{theorem}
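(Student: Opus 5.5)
We first decompose $D$ into its strong components and then build $H$ from them. Known results on strong 3-transitive digraphs (Hern\'andez-Cruz) say that a strong 3-transitive digraph is either complete, a complete bipartite digraph, or a directed 3-cycle with at most two symmetric arcs (more precisely, a 3-cycle with $0$, $1$ or $2$ symmetric arcs). If we cannot cite this, we will prove it directly. Take a strong component $S$ with at least two vertices. If $S$ contains an odd cycle, then applying 3-transitivity along closed walks shows that every pair of vertices is joined in both directions, except in the exceptional 3-vertex case. If $S$ is bipartite, the same argument shows that every vertex of one side dominates, and is dominated by, every vertex of the other side.

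The key steps are as follows.

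\textbf{Step 1.} Classify the strong components as above: complete digraphs (including single vertices), complete bipartite digraphs $(X,Y)$ with both sides nonempty, and exceptional triangles.

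\textbf{Step 2.} Show that arcs between distinct strong components are \emph{homogeneous} with respect to the parts. If $S$ is complete or a triangle and $v\notin S$, then $v$ sends an arc to some vertex of $S$ only if it sends arcs to all of $S$. To see this, suppose $(v,s)\in E$ and $t\in S$. A path $v\to s\to s'\to t$ inside $S$ gives $(v,t)$ by 3-transitivity; for a triangle of length 3 this works directly. In a complete $S$ with $|S|\le 2$, we use a length-3 walk that becomes a path after choosing suitable vertices. Small cases, such as $|S|=2$ where $s\to t\to s$ is only a 2-path, must be checked separately; there we may need paths through $v$'s other neighbors, or else accept a singleton $S$.

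For a complete bipartite component $(X,Y)$, a path $v\to x\to y\to x'$ gives $(v,x')$ for all $x'\in X$. So $v$ is homogeneous to $X$, but not necessarily to $Y$. This is exactly why $X$ and $Y$ become separate vertices $x_i,y_i$ of $H$ joined by a 2-cycle. The dual statements hold for in-arcs.

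\textbf{Step 3.} Define $H$ by contracting each complete or triangle component to a single vertex $z_j$, and each bipartite component to a 2-cycle $\langle x_i,y_i\rangle$. Arcs of $H$ are placed between parts whenever one (equivalently, by Step 2, every) arc exists in $D$. Then $D=H[\mathscr{X},\mathscr{Y},\mathscr{Z}]$ by construction. $H$ is semi-acyclic because any cycle in $H$ other than the designated 2-cycles would lift to a cycle of $D$ meeting two strong components. Also, an arc between two contracted classes cannot be symmetric unless the classes are the two sides of one bipartite component.

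The remaining task is that $H$ is 3-transitive. A 3-path in $H$ lifts to a 3-path in $D$ by choosing representatives. Distinct vertices of $H$ give distinct representatives, and for the 2-cycles we choose $x\in X_i$ and $y\in Y_i$. The closing arc in $D$ then projects back to $H$.

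The main obstacle is Step 2 in degenerate situations: components with one or two vertices, and the interplay where $v$ is adjacent to only one side of a bipartite component. We must also verify that arcs from a bipartite side to an outside component do not create symmetric arcs in $H$ across different components, which could violate the pairwise disjointness of 2-cycles. We would handle these by explicit case analysis using 3-paths that pass through the component.
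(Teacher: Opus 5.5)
Your overall route matches the paper's. You classify strong components by Hern\'andez-Cruz's lemma, prove the homogeneity statement (the paper's Lemma~\ref{lemhash}), contract to the skeleton, and get semi-acyclicity because arcs in both directions between different strong components are impossible. You then get 3-transitivity of $H$ by lifting a 3-path to distinct representatives. Your Step 2 arguments for complete bipartite components ($v\to x\to y\to x'$), for complete components of order at least $3$, and for the exceptional triangles are correct. Your worry about symmetric arcs in $H$ across different components is already settled by your own remark in Step 3.

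The gap is the case you left open: a strong component $S=\{s,t\}$ with $s\leftrightarrow t$. Your Step 2 claim is false for it, and you do not resolve it; the first remedy you suggest provably cannot work. Take $V=\{v,s,t\}$ with arcs $v\to s$, $s\to t$, $t\to s$. There is no path of length $3$ at all, so $D$ is vacuously 3-transitive, yet $v\to s$ and $v\not\to t$. No argument through $v$'s other neighbors can force $v\to t$. And if $\{s,t\}$ is contracted to a single vertex $z_j$, as your Step 3 prescribes for complete components, the substitution would require $v\to t$, so $D$ is not obtained from $H$.

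The fix is a classification choice, not further case analysis. The complete digraph on two vertices is the complete bipartite digraph with $X=\{s\}$, $Y=\{t\}$, so it must be placed in the bipartite class and become a 2-cycle $\langle x_i,y_i\rangle$ of $H$. Only singletons, complete components of order at least $3$, and the triangles become vertices $z_j$. This is exactly why the paper requires $|S|>2$ in $\mathscr{S}_3$. With that convention, homogeneity for this component is trivial (each side is a single vertex), and the rest of your argument goes through unchanged.
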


\begin{theorem} \label{main2}
Let $D$ be a 3-quasi-transitive digraph. Then $D$ is obtained from a 3-transitive
semi-acyclic digraph $H$ by substituting strong semicomplete bipartite digraphs for
the 2-cycles and strong semicomplete digraphs or the digraphs $F_n$ in Figure~\ref{fnn1}
for the vertices not on 2-cycles.
\end{theorem}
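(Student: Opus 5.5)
The plan is to reduce to the strong components of $D$ and then reassemble the condensation using the 3-quasi-transitive condition. First I would invoke the known characterization of strong 3-quasi-transitive digraphs (Bang-Jensen, Hern\'andez-Cruz). A strong 3-quasi-transitive digraph is either semicomplete, or semicomplete bipartite, or one of the exceptional digraphs $F_n$. So each strong component $S$ of $D$ is a strong semicomplete digraph, a strong semicomplete bipartite digraph, or some $F_n$. In the bipartite case I record the bipartition $(X_S,Y_S)$.

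Next I would study how arcs between distinct strong components behave. The key lemma is this: if $S,S'$ are distinct strong components, $|S|\ge 2$, and some arc goes from $S$ to $S'$, then every vertex of $S$ dominates every vertex of $S'$ (and symmetrically for arcs into $S$). The one refinement is that when $S$ is bipartite, the arcs out of $S$ may leave only one side. To prove it, take an arc $(u,v)$ with $u\in S$, $v\in S'$. Any $w\in S$ with a path of length 2 to $u$ inside $S$ gives a path $(w,\cdot,u,v)$ of length 3, so $w$ and $v$ are adjacent. Since $S'$ does not reach $S$, the arc must be $(w,v)$. In a strong semicomplete digraph, or in $F_n$, every vertex has length-2 walks to $u$, possibly after iterating. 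In a semicomplete bipartite digraph only the vertices on $u$'s side do, which is exactly the source of the split into $X_i,Y_i$. Applying the same argument on the $S'$ side with paths $(u,v,\cdot,\cdot)$ propagates the arcs to all of $S'$.

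I would then build $H$. Its vertices are the non-bipartite strong components, each contracted to one vertex $z_j$, together with the two sides $x_i,y_i$ of each bipartite component. The arcs of $H$ are the induced complete adjacencies, plus the 2-cycle $\langle x_i,y_i\rangle$. Because the condensation is acyclic and the $2$-cycles come only from the bipartite components, $H$ is semi-acyclic. Since $D$ consists of complete substitutions, $D=H[\mathscr X,\mathscr Y,\mathscr Z]$.

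It remains to show that $H$ is 3-transitive, and I expect this to be the main obstacle. Take a path $(a,b,c,d)$ in $H$ and lift it to a path of length 3 in $D$. The 3-quasi-transitive condition gives $(a,d)$ or $(d,a)$. If $a$ and $d$ lie in different strong components, acyclicity forces $(a,d)$. Otherwise they lie in the same 2-cycle component, or $a=d$. The degenerate cases need care: when the path runs through a 2-cycle, a lift to a genuine path might not exist. There I would pick the lifts inside the complete bipartite pieces and check the parity of the sides. Note also that lifts through $F_n$ or semicomplete pieces that create a short cycle are excluded by acyclicity of the condensation.
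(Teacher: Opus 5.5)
Your proposal is correct and follows essentially the paper's route: classify the strong components by the strong 3-quasi-transitive characterization, and propagate an arc along 2-paths inside a component to get complete adjacency between blocks (for semicomplete components this uses a Hamiltonian cycle plus a chord, as in Lemma~\ref{quasilem}); then lift 3-paths of the skeleton to $D$, using that arcs between distinct strong components go one way. Two small points need tightening: first, a strong component on two vertices must be put in the bipartite class, since as a semicomplete 2-cycle it has no 2-paths and your propagation fails; second, the blocks $T_i$ are only semicomplete bipartite, not complete bipartite, so an arc $x_i\to y_i$ of $H$ is lifted using some existing arc of $T_i$ from $X_i$ to $Y_i$ (guaranteed by strong connectivity), and since no two arcs of a 3-path in $H$ lie in the same 2-cycle, such lifts always exist and no parity issue arises.
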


Let $D = (V,E)$ be a digraph. For a subset $\emptyset\neq X\subseteq V$, let $D[X]$
denote the subdigraph of $D$ induced by $X$. For a path \(P\) in $D$, let \(|P|\)
denote its order, that is, the number of vertices of \(P\). Let
\[
        \lambda(D)=\max\{|P|: P \text{ is a path in }D\}.
\]
The following conjecture is known as the {\em path partition conjecture}.

\begin{conjecture} \label{ppc}
For every digraph $D = (V,E)$ and every pair of positive integers 
$\lambda_1, \lambda_2$ satisfying $\lambda_1+\lambda_2=\lambda(D)$, there is a partition
$V=A\cup B$ such that 
$$ \lambda(D[A])\leq\lambda_1,\qquad \lambda(D[B])\leq\lambda_2.$$
\end{conjecture}

The path partition conjecture remains wide open although it has been verified for 
some classes of digraphs. Arroyo and Galeana-S\'anchez \cite{AG} proved that every
strong 3-quasi-transitive digraph satisfies the path partition conjecture. 
They also showed that the conjecture holds for digraphs which can be obtained from 
acyclic digraphs by substituting digraphs that meet the conjecture. 
Bo et. al. \cite{BGLL} proved that every 3-quasi-transitive digraph satisfies the path 
partition conjecture.

We will prove the following theorem which generalizes the above mentioned results on
the path partition conjecture.

\begin{theorem} \label{main3}
Let $D$ be obtained from a semi-acyclic digraph $H$ by substituting bipartite digraphs 
$T_i$ for the 2-cycles and digraphs $Z_j$ for the vertices not on 2-cycles. If
the digraphs $Z_j$ satisfy the path partition conjecture, then $D$ satisfies 
the path partition conjecture.
\end{theorem}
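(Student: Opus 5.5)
The plan is to reduce to the known acyclic-substitution argument of Arroyo and Galeana-S\'anchez \cite{AG} by contracting each 2-cycle of $H$. Let $H'$ be obtained from $H$ by identifying $x_i$ and $y_i$ into a single vertex $w_i$ for every $i\in I$.

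\textbf{$H'$ is acyclic.} A directed cycle in $H'$ through $w_i$ would come from a path in $H$ between $\{x_i,y_i\}$ that avoids the arcs of $\langle x_i,y_i\rangle$. Closing it up with the symmetric arc gives a directed cycle of length at least $3$ in $H$. For example, $x_i\to\cdots\to y_i$ together with $(y_i,x_i)$ is such a cycle. Alternatively it gives a cycle through two distinct 2-cycles, or a cycle $x_i\to\cdots\to x_i$. All of these are excluded by semi-acyclicity. Hence $V(D)$ is partitioned into modules $M_h$, $h\in V(H')$: either $M_h=Z_j$, or $M_h=X_i\cup Y_i$ carrying $T_i$.

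\textbf{Paths meet modules consecutively.} Every directed path of $D$ meets the modules in a sequence that is a directed path of $H'$. Therefore it visits each module in one consecutive segment. The only caveat is that for $M_h=X_i\cup Y_i$, the arcs entering and leaving differ between $X_i$ and $Y_i$, so this is not a plain substitution. Inside $T_i$ a segment alternates between $X_i$ and $Y_i$.

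\textbf{Constructing the partition.} Fix $\lambda_1+\lambda_2=\lambda(D)$ and process $V(H')$ in topological order. For each vertex $u$ of $D$, let $p(u)$ be the maximum order of a path of $D$ that ends at a vertex of an earlier module and has an arc into $u$, with $p(u)=0$ if there is none. This depends only on $u$'s module, except that in a 2-cycle module it may take one value on $X_i$ and another on $Y_i$. For $M_h=Z_j$, with $p=p(Z_j)$, I would follow \cite{AG}:
\begin{itemize}
\item if $p+\lambda(Z_j)\le\lambda_1$, put all of $Z_j$ in $A$;
\item if $p\ge\lambda_1$, put all of $Z_j$ in $B$;
\item otherwise split $Z_j$ by the path partition conjecture for $Z_j$ with parameters $\lambda_1-p$ and $\lambda(Z_j)-(\lambda_1-p)$.
\end{itemize}
Then I would verify, by induction along a path of $D[A]$ (respectively $D[B]$), that its order up to the end of each module is at most $\lambda_1$ (respectively at most the budget $\lambda_2$ allowed at that module). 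This uses $p(h)+\lambda(M_h)\le\lambda(D)$ and that each module is visited once.

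\textbf{The bipartite modules (main obstacle).} We do not have the conjecture for bipartite digraphs, so $T_i$ must be split explicitly. For each vertex $u\in X_i\cup Y_i$, let $\ell(u)$ be the largest order of a path of $D$ ending at $u$. This includes the previous-module part $p(u)$ plus an alternating segment inside $T_i$. I would put $u$ in $A$ iff $\ell(u)\le\lambda_1$. Then along any path of $D[A]$ within $T_i$ the order is bounded by $\ell$ of its last vertex, so it is at most $\lambda_1$.

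For $B$, I need the key estimate: a segment of a path in $D[B]$ inside $T_i$, starting at $u$ with $\ell(u)>\lambda_1$, can be extended backwards by a path of order at least $\lambda_1$. This extension would pass through $A$-vertices or earlier modules and be vertex-disjoint from the segment, giving total order at most $\lambda(D)$. The difficulty is disjointness: the witnessing path for $\ell(u)$ may reuse vertices of $T_i$ on the $B$-segment. I would try to repair this using the alternation in $T_i$ and the fact that $X_i$, $Y_i$ each receive uniform external arcs. The goal is to reroute the witness so that its last portion inside $T_i$ lies in $A$, i.e.\ has smaller $\ell$-values. If this fails, the fallback is to choose the split of $T_i$ separately for $X_i$ and $Y_i$ using $p(X_i)$ and $p(Y_i)$. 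Since $X_i$ and $Y_i$ are independent sets, the pieces $D[A]\cap T_i$ can then be controlled through alternation.
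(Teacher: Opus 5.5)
Your contraction argument ($H'$ acyclic, so every path visits each block in one consecutive segment) is correct. Your rule for ordinary blocks coincides with the paper's, because $p^{end}_{z_j}=p^{in}_{z_j}+\lambda(Z_j)$. You can also drop the induction with budgets: for $D[A]$ only the block of the terminal vertex matters, and for $D[B]$ only the block of $r_1$ does.

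The genuine gap is in the bipartite blocks, and it is not a disjointness technicality: the rule ``$u\in A$ iff $\ell(u)\le\lambda_1$'' is false. Let $H$ be a single $2$-cycle and $T$ the directed $4$-cycle $a\to b\to c\to d\to a$ with ports $\{a,c\}$ and $\{b,d\}$. Then $\lambda(D)=4$ and $\ell\equiv 4$. For $\lambda_1=1$ everything goes to $B$, and $\lambda(D[B])=4>3$. When $D$ is itself semi-acyclic (all blocks singletons), your rule is exactly the height cut $A_0,B_0$ from the proof of Theorem~\ref{thm:semi-acyclic-ppc}, and Example~\ref{ex:rule-I-needed} shows that cut fails for $q=2$. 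So the backward-extension estimate you want for $B$ is false in general. A $B$-path can run through $T_i$ for a long time, and rerouting a witness through $A$-vertices of $T_i$ is hopeless when $T_i$ has none.

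The missing idea is to assign whole ports. Each port is an independent set that sends and receives arcs uniformly to and from vertices outside $T_i$. So if one port lies in $A$ and the other in $B$, every path of $D[A]$ or of $D[B]$ meets $T_i$ in at most one vertex. The rule must use the end-values $p^{end}_{X_i},p^{end}_{Y_i}$ as well as the in-values:
\begin{itemize}
\item if $p_i^{end}\le q$, put all of $T_i$ into $A$, so $A$-paths ending there have order at most $q$;
\item if $p_i^{in}<q<p_i^{end}$, put a port $U_i$ with $p^{end}_{U_i}>q$ into $B$ and the other port into $A$;
\item if $p_i^{in}\ge q$, put a port into $B$ exactly when its in-value is at least $q$.
\end{itemize}
In the split cases, an $A$-path ending in $T_i$ has the form $P'u$ with $P'$ outside $T_i$ and $|P'|\le q-1$. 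For $B$ in the middle case, $R$ has $r_2\notin T_i$. You replace $r_1$ by a witness $Q$ of order at least $q+1$ ending in $U_i$ and continue with $r_2\cdots r_s$. This works because all of $U_i$ sends arcs to $r_2$, and $Q$ is disjoint from $r_2\cdots r_s$ by the block-intersection property. In the last case, you prepend an outside path of order at least $q$ entering the port of $r_1$.

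Your fallback points toward a port-wise split but specifies no rule. Moreover, $p(X_i)$ and $p(Y_i)$ alone are not enough: in the middle case it is the end-value of the $B$-port, not its in-value, that gives the estimate for $B$.
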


Let $D=(V,E)$ be a digraph. Call $D$ {\em strong} if for any two vertices $u, v$,
there is a directed path from $u$ to $v$ and a directed path from $v$ to $u$.
For $X,Y\subseteq V$, if $xy\in E$ for all $x\in X$ and $y\in Y$, we denote $X\longmapsto Y$. For $X=\set{x}$, we write $x\longmapsto Y$. The notation $X\longmapsto y$ is used for $Y=\set{y}$. Let $x\in V$. Then $N^+(x)=\Set{y\in V}{xy\in E}$, $N^-(x)=\Set{y\in V}{yx\in E}$, $N^{++}(x)=\Set{y\in V}{xw,wy\in E\text{ for some }w\in V}$ and $N^{--}(x)=\Set{y\in V}{yw,wx\in E\text{ for some }w\in V}$.

\section{3-Transitive digraphs}

\begin{lemma} \cite{HC} \label{strong-3-trans}
Let $D$ be a 3-transitive digraph. If $D$ is strong then it is either complete or 
complete bipartite or a directed triangle with at most two symmetric arcs.
\end{lemma}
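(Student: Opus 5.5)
Since the statement is cited from \cite{HC}, I would prove it directly from 3-transitivity by a case analysis on the cycle structure of $D$. The approach uses the length of a longest cycle (equivalently, cycle lengths present) in the strong digraph $D$.

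\textbf{Step 1: Reduce cycle lengths.} Let $C=(v_0,v_1,\dots,v_{k-1},v_0)$ be a directed cycle with $k\ge 4$. Applying 3-transitivity to the path $v_i v_{i+1} v_{i+2} v_{i+3}$ gives $(v_i,v_{i+3})\in E$, indices mod $k$. Iterating, for any $j$ we get arcs $(v_i,v_{i+3m})$. When $\gcd(3,k)=1$ this gives all arcs $(v_i,v_j)$, $i\ne j$, so $V(C)$ induces a complete digraph. When $3\mid k$, the arcs $v_i\to v_{i+3}$ together with the cycle arcs yield paths of length $3$ such as $v_i v_{i+1} v_{i+4} v_{i+5}$ (length $3$, giving $v_i\to v_{i+5}$) and so on. This shows $(v_i,v_j)\in E$ exactly when $j-i$ is odd if $k$ is even, and for all $j\neq i$ if $k$ is odd. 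In all cases $V(C)$ induces a complete digraph or a complete bipartite digraph (the latter only for even $k$).

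\textbf{Step 2: Pass from a cycle to all of $D$.} Since $D$ is strong, every vertex lies on a closed walk through any fixed vertex. I would take a maximal vertex set $S$ inducing a complete (respectively complete bipartite) digraph containing a cycle of length $\ge 3$ (respectively $\ge 4$). I would then show that if $w\notin S$ has an arc to or from $S$, then strongness gives a path from $S$ out to $w$ and back. Combining this path with arcs inside $S$ and 3-transitivity forces $w$ to be fully joined, in the pattern of $S$, so $S\cup\{w\}$ is still complete (respectively complete bipartite), contradicting maximality. The key point is that inside a complete or complete bipartite $S$ one can choose paths of length exactly $3$ ending at any prescribed vertex of the right parity. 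For complete $S$ with $|S|\ge 3$, or complete bipartite $S$ with $|S|\ge 4$, there is enough room to route a length-3 path $s\to\cdots\to w$ or $w\to\cdots\to s$ through $S$ and conclude the arcs between $w$ and every $s\in S$. Parity (bipartiteness) is preserved because in the bipartite case $w$'s neighbours determine its side and odd closed walks would create an odd cycle, making everything complete by Step 1.

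\textbf{Step 3: Small cases.} If every cycle of $D$ has length at most $3$, Step 1 does not apply. Here I would argue directly that $D$ has at most $3$ vertices or is a 2-cycle structure. Two 2-cycles sharing a vertex, $\langle a,b\rangle,\langle b,c\rangle$, give paths $a b c b$ and similar ones, leading to a complete digraph on $\{a,b,c\}$; continuing this way gives complete or bipartite structure. A directed triangle plus any further vertex in a strong digraph creates a cycle of length $\ge 4$ (or forces merging), by the previous steps. A lone triangle $abc$ with extra symmetric arcs is either complete, if all three are symmetric, or a directed triangle with at most two symmetric arcs. A single 2-cycle is complete bipartite $K_{1,1}$ (and complete).

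The main obstacle I expect is Step 2, the absorption argument. The difficulty is controlling the connecting path from $S$ to $w$ when it is long: it must be shortened using 3-transitivity, which reduces lengths by $2$ and so preserves parity. This parity bookkeeping is exactly what separates the complete case from the bipartite one, and it needs care when the path length mod $2$ and $|S|$ are small.
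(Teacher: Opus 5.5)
The paper gives no proof of this lemma (it is quoted from \cite{HC}), so your argument has to stand on its own. As written, Steps~1 and~3 contain false claims.

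\textbf{Step 1.} 3-transitivity only ever combines \emph{three} consecutive arcs. Two chords $v_i\to v_{i+3}\to v_{i+6}$ therefore give nothing, and the arcs $(v_i,v_{i+3m})$ are not obtained. Starting from the cycle arcs (step $1$), every derived arc has step, modulo $k$, equal to a sum of three previously derived steps. Hence for even $k$ only odd steps ever arise, and the relevant dichotomy is the parity of $k$, not $\gcd(3,k)$. Your claim that $V(C)$ is complete whenever $\gcd(3,k)=1$ already fails for $k=4$: the $4$-cycle with all arcs made symmetric, i.e.\ $K_{2,2}$, is strong and 3-transitive but not complete. Likewise, ``exactly when $j-i$ is odd'' is wrong, since $K_4$ contains $4$-cycles. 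The correct statement has two parts. For odd $k\ge5$, $V(C)$ is complete. For even $k$, the two parity classes span a complete bipartite digraph, and any further chord, say $x_1\to x_2$ inside a class, forces completeness: $y\,x_1\,x_2\,y'$ gives $y\to y'$, and then $x\,y\,y'\,x'$ gives $x\to x'$.

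\textbf{Step 3.} $a\,b\,c\,b$ is not a path, and the conclusion is false. A digraph on three vertices has no path of length $3$, so $a\leftrightarrow b\leftrightarrow c$ is strong and 3-transitive, and it is complete bipartite, not complete. More generally, every symmetric star $K_{1,m}$ is strong, 3-transitive and has only $2$-cycles. What you actually need, for $n\ge4$ with no cycle of length at least $4$, is the following.
(a) There is no $3$-cycle. Suppose $w$ lies outside a triangle $(a,b,c,a)$ and $w\to a$. Then $w\,a\,b\,c$ and $w\,c\,a\,b$ give $w\to c$ and $w\to b$. Take a shortest path $t\to\cdots\to w$ from the triangle back to $w$, and let $t^+,t^{++}$ be the successors of $t$ on the triangle. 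Combining this path with $w\to t^+\to t^{++}\to t$ closes a cycle of length at least $4$. The case $a\to w$ is symmetric.
(b) Hence every cycle is a $2$-cycle, so $D$ is the symmetric digraph of a tree. A path $u\,v\,w\,z$ in that tree would force $u\to z$ and a $4$-cycle, so the tree is a star and $D=K_{1,n-1}$.

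\textbf{Step 2.} This step is only asserted. It can be done. For a complete $S$ with $|S|\ge3$, $w\to s$ gives $w\to s'$ via $w\,s\,s''\,s'$. A shortest return path $s\to u\to w$ then gives $s'\to w$ via $s'\,s\,u\,w$. For a complete bipartite $S$ with both parts of size at least $2$, a new vertex either joins one side, or, using strong connectivity, forces an arc inside a part and hence a complete triangle. In the latter case the complete case takes over. This case analysis, including the switch from a bipartite core to a complete one, is the substance of the proof and must be written out.
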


Let $D=(V,E)$ be a $3$-transitive digraph and let $\mathscr{S}$ be the set of strong 
components of $D$. By Lemma \ref{strong-3-trans}, $\mathscr{S}$ can be
partitioned into $\mathscr{S}_1$, $\mathscr{S}_2$ and $\mathscr{S}_3$ where
\begin{tabbing} 
\hspace{1cm} \= $\mathscr{S}_1 =\Set{S\in\mathscr{S}}{|S|=1}$\\
\> $\mathscr{S}_2=\Set{S\in\mathscr{S}}{S=X_S\cup Y_S\text{ is complete bipartite, }|S|>1}$ \\ 
\> $\mathscr{S}_3=\Set{S\in\mathscr{S}}{S\text{ is either complete or a directed triangle
with at most two symmetric arcs, }|S|>2}.$
\end{tabbing}

We define a digraph $D^\# =(V^\# ,E^\# )$ with the vertex set
$$V^\# =\mathscr{S}_1\cup\Set{X_S}{S\in\mathscr{S}_2}\cup \Set{Y_S}{S\in\mathscr{S}_2}\cup\mathscr{S}_3,$$
and the arc set
$$E^\# = \{(M,T): M, T \in V^\#, M \neq T,\ \mbox{and}\ x\to y\ \mbox{for\ some}\ 
        x \in M, y \in T\}.$$
We call $D^\#$ the {\em skeleton} of $D$. 

\begin{lemma}\label{lemhash}
Let $D=(V,E)$ be a $3$-transitive digraph and let $D^\#$ be the skeleton of $D$.
Suppose that $M\in V^\#$ and $x \in V\setminus M$. Then
\begin{enumerate}
\item $x\rightarrow z$ in $D$ for some $z\in M$ $\Longrightarrow$ $x\longmapsto M$.
\item $z\rightarrow x$ in $D$ for some $z\in M$ $\Longrightarrow$ $M \longmapsto x$.
\end{enumerate}
\end{lemma}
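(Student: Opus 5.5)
The plan is to verify part~1 directly by treating each of the three possible types of $M\in V^\#$ separately. In every case the idea is the same: starting from the arc $x\to z$, continue from $z$ to an arbitrary target $z'\in M$ along a directed path of length $2$ that stays inside the strong component containing $M$. Then 3-transitivity applied to the resulting path of length $3$ yields $x\to z'$. Part~2 will follow by duality. Reversing every arc of a 3-transitive digraph again gives a 3-transitive digraph, because $(u,v,w,z)$ is a path in the reverse iff $(z,w,v,u)$ is a path in $D$. Reversal preserves strong components together with their types in Lemma~\ref{strong-3-trans}, and it preserves the bipartition $X_S,Y_S$. So part~1 for the reverse digraph is exactly part~2 for $D$.

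Fix $z'\in M$ with $z'\neq z$; there is nothing to prove if $z'=z$. If $M\in\mathscr{S}_1$, then $M=\{z\}$ and the statement is trivial. If $M\in\mathscr{S}_3$ is complete, then $|M|>2$, so we pick $w\in M\setminus\{z,z'\}$. Since $x\notin M$, the sequence $(x,z,w,z')$ is a path of length $3$, and hence $x\to z'$. If $M\in\mathscr{S}_3$ is a directed triangle $a\to b\to c\to a$ (possibly with some symmetric arcs), assume by symmetry that $z=a$. The path $(x,a,b,c)$ gives $x\to c$. Then the path $(x,c,a,b)$ gives $x\to b$. All vertices on these paths are distinct because $x\notin M=S$.

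The only case needing care is $M=X_S$ for some $S\in\mathscr{S}_2$. Here $x\notin M$ does not by itself exclude $x\in Y_S$. If $x\in Y_S$, then $x\longmapsto X_S$ holds immediately, since $S$ is complete bipartite with arcs in both directions between $X_S$ and $Y_S$. Otherwise $x\notin S$. Since $|S|>1$, both parts are nonempty, so we choose any $y\in Y_S$. Then $z\to y\to z'$, and $(x,z,y,z')$ is a path of length $3$ with distinct vertices, which gives $x\to z'$. The case $M=Y_S$ is identical with the roles of $X_S$ and $Y_S$ exchanged.

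The main obstacle is ensuring that the length-$3$ sequences used are genuine paths, that is, that their vertices are distinct. This is exactly where $|M|>2$ in $\mathscr{S}_3$, the cyclic structure of the triangle, and the separate treatment of $x\in Y_S$ in the bipartite case are needed.
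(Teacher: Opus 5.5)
Your proposal is correct and follows essentially the same argument as the paper: a case analysis on the type of $M$ in which the arc $x\to z$ is extended by a directed $2$-path inside the component, and $3$-transitivity is then applied. The differences are minor. You obtain part~2 by reversing all arcs, whereas the paper repeats the case analysis for this lemma (though it uses exactly this duality for the $3$-quasi-transitive analogue). You also explicitly treat the case $x\in Y_S$ when $M=X_S$, which the paper's proof passes over.
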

\begin{proof}
Let $x\rightarrow z$ for some $z\in M$. The statement is trivial for $M\in\mathscr{S}_1$. Let $M\in \mathscr{S}_3$. Then $M$ is either complete with at least $3$ vertices or a directed triangle with at most two symmetric arcs. In the first case, for any $y\in M$, we have $x\rightarrow z\rightarrow y^\prime\rightarrow y$ for some $y^\prime\in M$ such that $y^\prime\neq\set{y,z}$. Since $|M|\geq 3$, such a $y^\prime$ always exists. Thus $x\rightarrow y$ in $D$. In the second case, let $(z,z_1,z_2,z)$ be the directed triangle. Then $x\rightarrow z\rightarrow z_1 \rightarrow z_2$ holds in $D$ which implies $x\rightarrow z_2$ in $D$. Also $x\rightarrow z_2\rightarrow z\rightarrow z_1$ in $D$ implies $x\rightarrow z_1$ in $D$. Thus $x\rightarrow y$ for all $y\in M$.

Next, let $M=X_S$ For some $S\in\mathscr{S}_2$. If $|X_S|=1$, we have nothing to prove. Let $|X_S|>1$ and $y\in X_S$. Since $Y_S\neq\emptyset$, let $z_1\in Y_S$. Then $x\rightarrow z\rightarrow z_1\rightarrow y$ in $D$ as $S$ is complete bipartite. This implies $x\rightarrow y$. The case is similar for $M=Y_S$.

Suppose $z\rightarrow x$ for some $z\in M$. As before, the result is trivial for $M\in\mathscr{S}_1$. Let $M\in\mathscr{S}_3$. If $M$ is complete with at least $3$ vertices, then for any $y,y_1\in M\smallsetminus\set{z}$, we have $y\rightarrow y_1\rightarrow z\rightarrow x$ in $D$ which implies $y\rightarrow x$ in $D$. Again let $(z,z_1,z_2,z)$ be the directed triangle in $M$. Then $z_1\rightarrow z_2\rightarrow z\rightarrow x$ in $D$ and so $z_1\rightarrow x$ in $D$. Thus $z_2\rightarrow z\rightarrow z_1\rightarrow x$ in $D$ and so $z_2\rightarrow x$ in $D$.

Finally, let $M=X_S$ and $|M|>1$. As $Y_S\neq\emptyset$, take $y^\prime\in Y_S$. Let $y\in M=X_S$. Then $y\rightarrow y^\prime\rightarrow z\rightarrow x$ in $D$ which implies $y\rightarrow x$ in $D$ as required. The proof for $M=Y_S$ is similar.
\end{proof}

\begin{lemma} \label{sa1}
Let $D$ be a $3$-transitive digraph. Then $D^\#$ is a semi-acyclic $3$-transitive 
digraph.
\end{lemma}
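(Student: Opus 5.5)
The proof splits into two parts: first that $D^\#$ is 3-transitive, then that it is semi-acyclic. Both rest on Lemma~\ref{lemhash}, which makes adjacency between distinct parts $M,T\in V^\#$ ``all or nothing''. If some arc runs from $M$ to $T$, pick $x\in M$ with $x\to y$ for some $y\in T$. Since $x\notin T$, Lemma~\ref{lemhash}(1) gives $x\longmapsto T$. Then for each $y\in T$, Lemma~\ref{lemhash}(2) applied to $M$ and $y$ gives $M\longmapsto y$, so $M\longmapsto T$. Hence every arc $(M,T)$ of $D^\#$ means $M\longmapsto T$ in $D$, and $D^\#$ is an induced-quotient of $D$.

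\emph{3-transitivity.} Let $(M_1,M_2,M_3,M_4)$ be a directed path in $D^\#$; its vertices are distinct. Choosing $u_k\in M_k$ gives a directed path $u_1u_2u_3u_4$ in $D$, because $M_k\longmapsto M_{k+1}$. By 3-transitivity, $u_1\to u_4$, and since $M_1\ne M_4$ this is the arc $(M_1,M_4)$ in $D^\#$.

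\emph{Semi-acyclicity.} Suppose $D^\#$ has a directed cycle $C=(M_1,\dots,M_k,M_1)$ with $k\ge 2$. All arcs are complete between consecutive parts, so the vertices of $M_1\cup\dots\cup M_k$ lie in one strong component $S$ of $D$. Each part in $V^\#$ is either a whole strong component or one side of some $S\in\mathscr{S}_2$. So the $M_i$ are all contained in the single component $S$, and $S$ is split into more than one part only when $S\in\mathscr{S}_2$, whose parts are exactly $X_S$ and $Y_S$. Therefore $k=2$ and $C=\langle X_S,Y_S\rangle$. Distinct such $2$-cycles come from distinct components, so they are vertex-disjoint. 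Hence the only cycles of $D^\#$ are pairwise disjoint $2$-cycles.

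The main obstacle is the well-definedness in the first step, namely that a single arc between parts forces all arcs. This needs both halves of Lemma~\ref{lemhash} used in sequence, together with checking that $x\notin T$ and $y\notin M$ hold because the parts are disjoint. A second point to watch is that $X_S$ is not a strong component. So the argument that a cycle of $D^\#$ lies inside one component must be phrased in terms of the strong components of $D$, not the parts, which is why the lifting of $C$ to a closed walk in $D$ is used there.
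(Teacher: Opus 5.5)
Your proposal is correct and follows essentially the same route as the paper. In both, 3-transitivity of $D^\#$ comes from lifting a directed path of $D^\#$ to one in $D$ via Lemma~\ref{lemhash}. Semi-acyclicity comes from the fact that a cycle of $D^\#$ lies inside a single strong component of $D$, and such a component splits into at most the two parts $X_S,Y_S$. Your preliminary observation that every arc $(M,T)$ of $D^\#$ gives $M\longmapsto T$ just packages the paper's use of Lemma~\ref{lemhash}, and your explicit lifting of the cycle spells out a step the paper states only tersely.
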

\begin{proof}
Let $M,T\in V^\#$ such that $M\neq T$. If $M=X_S$ and $T=Y_S$ (or vice-versa) for some $S\in\mathscr{S}_2$, then $M\leftrightarrow T$. In all other cases, $M$ and $T$ correspond to different strong components. Thus $x\rightarrow y$ and $y\rightarrow x$ cannot happen simultaneously in $D$ for otherwise union of these components would be a strong subdigraph of $D$ which is a contradiction. The same thing happens if we assume a directed cycle in $D^\#$ of length more than $2$. Also since strong components are vertex-disjoint, we have $D^\#$ is semi-acyclic.

Let $X\rightarrow Y\rightarrow Z\rightarrow W$ in $D^\#$. Then there are $x\in X$, $y,y^\prime\in Y$, $z,z^\prime\in Z$ and $w\in W$ such that $x\rightarrow y$, $y^\prime\rightarrow z$, $z^\prime \rightarrow w$ in $D$. By Lemma \ref{lemhash}, we have $x\rightarrow y\rightarrow z\rightarrow w$ in $D$. Thus $x\rightarrow w$ in $D$ which implies $X\rightarrow W$ in $D^\#$. Therefore $D^\#$ is $3$-transitive.
\end{proof}

We now prove Theorem~\ref{main1} which is reiterated as follows.

\begin{theorem} \label{3-trans}
Let $D$ be a $3$-transitive digraph. Then there exists a semi-acyclic 3-transitive 
digraph $H$ such that $D = H[\mathscr{X},\mathscr{Y},\mathscr{Z}]$ where
\begin{itemize}
\item $\mathscr{X} = \{X_i: i \in I\}$, $\mathscr{Y} = \{Y_i: i \in I\}$ and
      $\mathscr{Z} = \{Z_j: j \in J\}$.
\item $(X_i, Y_i)$ is a complete bipartite digraph for each $i \in I$.
\item $Z_j$ is either a complete digraph or a directed triangle with at most two 
      symmetric arcs. 
\end{itemize}
\end{theorem}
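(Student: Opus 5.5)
The natural choice is $H = D^\#$, the skeleton of $D$. By Lemma~\ref{sa1}, $H$ is a semi-acyclic 3-transitive digraph, so it remains to identify the 2-cycles of $H$ and to check that substituting into $H$ returns exactly $D$.

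\textbf{Identifying the 2-cycles.} From the proof of Lemma~\ref{sa1}, a symmetric pair $\langle M,T\rangle$ in $D^\#$ occurs only when $\{M,T\}=\{X_S,Y_S\}$ for some $S\in\mathscr{S}_2$. Conversely, every such pair is a 2-cycle, because $S$ is complete bipartite with both parts nonempty. So I will index the 2-cycles by $I=\mathscr{S}_2$, setting $x_S=X_S$ and $y_S=Y_S$, and take $T_S=D[X_S\cup Y_S]=S$, which is complete bipartite. The remaining vertices of $H$ are the elements of $\mathscr{S}_1\cup\mathscr{S}_3$; these form $J$. For each $M\in J$ I set $Z_M=D[M]$. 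This is a single vertex, which counts as a complete digraph, or by Lemma~\ref{strong-3-trans} it is complete or a directed triangle with at most two symmetric arcs.

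\textbf{Checking the arc sets agree.} The vertex sets coincide, since $V^\#$ partitions $V$. Take an arc $(u,v)$ of $D$. If $u,v$ lie in the same element of $V^\#$ or in the same $S\in\mathscr{S}_2$, the arc belongs to $E(Z_M)$ or $E(T_S)$. Otherwise $u\in M$ and $v\in N$ for distinct $M,N$ that do not form a pair $\{X_S,Y_S\}$. Then $(M,N)$ is a non-symmetric arc of $H$, so the arc is included by the substitution.

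Conversely, suppose $(M,N)$ is a non-symmetric arc of $H$. Then some $x\in M$ and $y\in N$ satisfy $x\to y$. Applying Lemma~\ref{lemhash}(1) gives $x\longmapsto N$. Applying Lemma~\ref{lemhash}(2) to each $y'\in N$ gives $M\longmapsto y'$. Hence $M\longmapsto N$ in $D$, so every arc prescribed by the substitution is present in $D$.

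One point needs care: arcs of $D$ inside $X_S$ (or inside $Y_S$). These must not exist, since $S$ is complete bipartite as an induced subdigraph, i.e.\ $S=D[V(S)]$ with $X_S,Y_S$ independent. I take this as the meaning of Lemma~\ref{strong-3-trans}, because strong components are induced subdigraphs. This bookkeeping, namely that intra-part arcs are absent and that the symmetric arcs of $H$ add nothing beyond $E(T_i)$, is the only delicate step. The rest follows directly from Lemma~\ref{lemhash} and Lemma~\ref{sa1}.
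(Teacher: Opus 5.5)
Your proposal is correct and follows the paper's proof: take $H=D^\#$, let $\mathscr{S}_2$ supply the bipartite blocks for the 2-cycles and $\mathscr{S}_1\cup\mathscr{S}_3$ the remaining blocks, use Lemma~\ref{sa1} to get that $H$ is semi-acyclic and 3-transitive, and use Lemma~\ref{lemhash} to get the substitution structure. You also spell out bookkeeping the paper leaves implicit, namely that the only 2-cycles of $D^\#$ are the pairs $\langle X_S,Y_S\rangle$ and that the arc sets agree in both directions, and that bookkeeping is sound.
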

\begin{proof}
Let $H = D^\#$, i.e., the skeleton of $D$, and let 
\begin{itemize}
\item $\{(X_i,Y_i): i\in I\} = \mathscr{S}_2$.
\item $\{Z_j: j\in J\} = \mathscr{S}_1 \cup \mathscr{S}_3$.
\end{itemize} 
Then $H$ is a semi-acyclic 3-transitive digraph by Lemma \ref{sa1}, and 
$D = H[\mathscr{X},\mathscr{Y},\mathscr{Z}]$ by Lemma \ref{lemhash}. 
\end{proof}

In the following, we show how to obtain a semi-acyclic $3$-transitive digraph from 
an acyclic $3$-transitive digraph. We begin with defining some special arcs.

\begin{definition}[Flexible arc]
Let $D=(V,E)$ be an acyclic $3$-transitive digraph. An arc $uv\in E$ is called 
{\em flexible} if replacing the arc $uv$ by $vu$ the digraph remains to be acyclic and 
$3$-transitive.
\end{definition}

\begin{proposition}\label{flprop}
Let $D=(V,E)$ be an acyclic $3$-transitive digraph. Then $uv\in E$ is flexible if and only if the following conditions hold:
\begin{enumerate}
\item $(u,v)$ is the only directed path from $u$ to $v$.
\item $N^-(v)\longmapsto N^+(u)$.
\item $N^{--}(v)\longmapsto u$.
\item $v\longmapsto N^{++}(u)$.
\end{enumerate}
\end{proposition}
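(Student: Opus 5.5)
Let $D'$ denote the digraph obtained from $D$ by replacing the arc $uv$ with $vu$. I will prove the two directions separately, treating acyclicity and $3$-transitivity of $D'$ as separate requirements. Two facts are used throughout. Since $D$ is acyclic, $vu\notin E$, so $D'$ has no symmetric pair at $\{u,v\}$. Also, in $D'$ we have $N'^+(v)=N^+(v)\cup\{u\}$, $N'^-(u)=N^-(u)\cup\{v\}$, and $v\notin N'^+(u)$, $u\notin N'^-(v)$.

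\emph{Acyclicity.} A directed cycle in $D'$ must use the new arc $vu$, since $D$ is acyclic. Such a cycle is $vu$ followed by a $u$--$v$ path in $D'$ that avoids $vu$, that is, a $u$--$v$ path in $D-uv$. Hence $D'$ is acyclic if and only if $(u,v)$ is the only directed path from $u$ to $v$ in $D$. This is condition~1.

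\emph{$3$-transitivity.} Assume condition~1. I will classify the paths $(a,b,c,d)$ of length $3$ in $D'$ according to whether they use $vu$, and whether the required arc $ad$ is the deleted arc $uv$.

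Paths not using $vu$ are paths of $D$, so $ad\in E$ by $3$-transitivity of $D$. Trouble can arise only when $(a,d)=(u,v)$, which would give a $u$--$v$ path of length $3$ in $D$ and contradict condition~1. So these paths are harmless, and condition~1 also rules out all of them in the converse direction.

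Paths using $vu$ can use it only once. Each such path contains no second occurrence of $u$ or $v$, because that would close a cycle in the acyclic digraph $D'$. There are three placements.
\begin{enumerate}
\item $(v,u,c,d)$: this needs $vd\in E'$ whenever $c\in N^+(u)$ and $d\in N^+(c)$ with $d\neq v$. This is exactly $v\longmapsto N^{++}(u)$, which is condition~4. The case $d=v$ cannot occur, since $(u,c,v)$ would violate condition~1.
\item $(a,v,u,d)$: this needs $ad$ for all $a\in N^-(v)$ and $d\in N^+(u)$. Note that $a\neq u$ because $uv$ has been removed, and $d\neq v$. This is condition~2.
\item $(a,b,v,u)$: this needs $au$ for all $a\in N^{--}(v)$. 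Here the middle vertex $b\neq u$ in $D'$. Paths $a\to b\to v$ with $b\neq u$ are the same in $D$ and $D'$, and if $b=u$ then the requirement $a\to u$ is automatic. This is condition~3.
\end{enumerate}

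One subtlety needs checking: the required arcs must lie in $E'$, so they must differ from $uv$. They are arcs of $E$ other than $uv$ precisely because none of them equals $(u,v)$. This holds because $d\neq v$ in the first case, and $a\neq u$ in the third case (otherwise $u\to b\to v$ would contradict condition~1).

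With this classification, sufficiency follows: conditions~1--4 give acyclicity and cover every path of length $3$ in $D'$. For necessity, assume $uv$ is flexible. Acyclicity gives condition~1. Each instance of conditions~2--4 is then a path of length $3$ in $D'$ as listed above, and $3$-transitivity of $D'$ produces the required arc in $E'\setminus\{vu\}\subseteq E$.

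I expect the main obstacle to be the bookkeeping at the boundary, namely the cases where $a$, $b$, $c$ or $d$ coincides with $u$ or $v$. Each of these must be checked to be either impossible under condition~1 or automatic. In particular, for condition~3 one must confirm that a middle vertex $b=u$ contributes nothing extra: it would require the path $(u,u,\ldots)$, which is impossible, or an arc $au$, which is already present.
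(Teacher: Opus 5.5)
Your proposal is correct and follows essentially the same route as the paper. In both, condition 1 is equivalent to acyclicity of $D'$, and conditions 2--4 are read off from the three positions the reversed arc $vu$ can occupy in a directed path of length $3$ in $D'$, which handles both directions at once. One small slip: in your ``subtlety'' paragraph, the arcs $vd$ and $au$ can never equal $uv$, simply because of their tail and head. What actually needs checking in the necessity direction is that they differ from $vu$, and this follows from the distinctness of the path's vertices. Likewise, the instance of condition~4 whose middle vertex is $v$ itself is automatic, just as $b=u$ is for condition~3.
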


\begin{proof}
Let $D=(V,E)$ be an acyclic $3$-transitive digraph and $uv\in E$. Let $D^\prime=(V,E^\prime)$, where $E^\prime =(E\smallsetminus\set{uv})\cup\set{vu}$. Suppose $uv\in E$ is a flexible arc in $D$. Then $D^\prime$ is acyclic and $3$-transitive.

If there is a directed path $(u=x_0,x_1,x_2,\ldots,x_n=v)$ ($n>1$) from $u$ to $v$ other than $(u,v)$, then after replacing $uv$ by $vu$, we have a directed cycle $(u=x_0,x_1,x_2,\ldots,x_n=v,u)$ and so $D^\prime$ is not acyclic. Therefore $uv$ is the only directed path from $u$ to $v$.

Let $x\in N^-(v)$, $y\in N^+(u)$. Then $xv,vu,uy\in E^\prime$, then $xy\in E^\prime$ as $D^\prime$ is $3$-transitive. Then $xy\in E$ as $\set{x,y}\neq \set{u,v}$. So we have $N^-(v)\longmapsto N^+(u)$ in $D$. Next, if $x\in N^{--}(v)$, then $xw,wv,vu\in E^\prime$ for some $w\in V$. This implies $xu\in E^\prime$ and so $xu\in E$ as $x\neq v$. Thus $N^{--}(v)\longmapsto u$. Finally, if $x\in N^{++}(u)$, then $vu,uw,wx\in E^\prime$ for some $w\in V$, which implies $vx\in E^\prime$ and hence $vx\in E$ as $x\neq u$. Thus we have $v\longmapsto N^{++}(u)$.

Conversely, let $D=(V,E)$ be an acyclic $3$-transitive digraph and $uv\in E$ satisfying the given conditions. Let $D^\prime=(V,E^\prime)$, where $E^\prime =(E\smallsetminus\set{uv})\cup\set{vu}$. Now, since $D$ had no directed cycle, $D^\prime$ would also be free from directed cycle unless the arc $vu$ creates one. But then there is a directed path from $u$ to $v$ in $D$ other than $(u,v)$ which violets the first condition. Therefore $D^\prime$ is acyclic. 

All the three other conditions ensures that $D$ remains to be $3$-transitive after replacing the arc $uv$ by $vu$ to obtain $D^\prime$. In fact, let $x,y,z,w\in V$ such that $xy,yz,zw\in E^\prime$. If $vu\not\in\set{xy,yz,zw}$, then $xy,yz,zw\in E$. So $xw\in E$. Now $xw\neq uv$ for otherwise we would have a directed path $(u,y,z,v)$ which again violets the first condition. Thus $xw\in E^\prime$. Now, if $vu=xy$, then $w\in N^{++}(u)$ and since $v\longmapsto N^{++}(u)$, we have $xw=vw\in E$ and hence in $E^\prime$ also as $vw\neq uv$. Next, if $vu=yz$, then $x\in N^-(v)$ and $w\in N^+(u)$ and so $xw\in E$ as $N^-(v)\longmapsto N^+(u)$. Again $xw=uv$ contradicts the first condition. So we have $xw\in E^\prime$. Finally, if $vu=zw$, then $x\in N^{--}(v)$ and we have $xw=xu\in E$ as $N^{--}(v)\longmapsto u$. Since $xw=xu\neq uv$, we have $xw\in E^\prime$. Therefore $D^\prime$ is $3$-transitive as required.
\end{proof}

It is important to understand that, for any arc $uv$ in an acyclic digraph, there is no directed path from $v$ to $u$ in $D$ for otherwise there would be a directed cycle. Thus, if $uv$ is a flexible arc in an acyclic $3$-transitive digraph $D$, then  neither there is a directed path from $u$ to $v$ other than $uv$ nor there is a directed path from $v$ to $u$ in $D$. Therefore end points of a flexible arc in $D$ cannot be the end points of a $3$-path $x\rightarrow y\rightarrow z\rightarrow w$ in $D$.

\begin{example}
It is not necessary that every acyclic $3$-transitive digraph contains a flexible arc. The following example shows that none of the arcs of the digraph $D=(V,E)$ in Figure \ref{nflex} is flexible, though $D$ is acyclic and $3$-transitive. Here $V=\set{a,b,c,d,e}$ and $E=\set{ba,bc,ca,ce,dc,de}$. Note that reversing $ba$ or $de$ invites directed cycles. If we reverse $bc$ or $dc$, then we will loose $3$-transitivity (as $da$ or $be$ is missing respectively). Similar is the case for $ca$ or $ce$. 
\end{example}

\begin{figure}[ht]
\begin{center}
\includegraphics[scale=0.7]{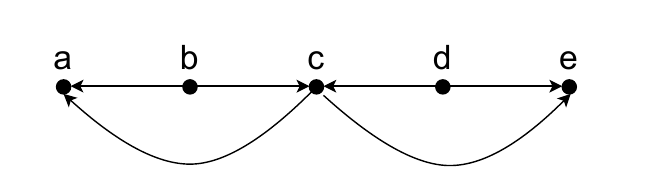}
\caption{An acyclic $3$-transitive digraph $D$ with no flexible arcs}\label{nflex}
\end{center}
\end{figure}

Unfortunately, any collection of vertex disjoint flexible arcs in an acyclic $3$-transitive digraph cannot be transformed to $2$-cycles to obtain a semi-acyclic $3$-transitive graphs. We require one more condition to complete the construction.

\begin{definition}[Faithful matching]
Let $D=(V,E)$ be an acyclic $3$-transitive digraph. A matching $M$ of $D$ consisting of flexible arcs is called {\em faithful} if $M$ satisfies the following property:
\begin{equation}\label{extram}
yx,wz\in M,\ yz\in E\Longrightarrow xw\in E.
\end{equation}
\end{definition}

\begin{example}
Consider the digraph $D=(V,E)$, where $D=\set{a,b,c,d}$ and $E=\set{ba,bc,dc}$, i.e., $D$ is the digraph
$$a\longleftarrow b\longrightarrow c\longleftarrow d$$
Then $D$ is acyclic and $3$-transitive. Also $M=\set{ba,dc}$ is a matching in $D$ consisting of flexible arcs. But $M$ is not faithful as $ad\not\in E$. Note that the digraph
$$a\longleftrightarrow b\longrightarrow c\longleftrightarrow d$$
is not $3$-transitive. 
\end{example}

\begin{lemma}
Let $D=(V,E)$ be a semi-acyclic $3$-transitive digraph. Let $\mathscr{S}$ be the collection of $2$-cycles in $D$. For each $C \in\mathscr{S}$, we replace $C = (u,v,u)$ by an arc $uv$, where $u$ and $v$ are chosen arbitrarily. Then the digraph $D^\prime=(V,E^\prime)$ obtained by this is an acyclic $3$-transitive digraph where the set $M$ of replaced arcs form a faithful matching in $D^\prime$.
\end{lemma}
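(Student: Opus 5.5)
The plan is to verify, in order, that $D^\prime$ is acyclic, that $D^\prime$ is $3$-transitive, that each replaced arc is flexible in $D^\prime$ (via Proposition~\ref{flprop}, or directly from the definition), and that $M$ is faithful. Two observations drive everything. First, $D^\prime$ is a subdigraph of $D$ on the same vertex set: we only delete the arc $vu$ of each $2$-cycle $\langle u,v\rangle$. Second, reversing every arc of $M$ in $D^\prime$ gives a digraph isomorphic to $D$ with the roles of $u,v$ swapped; equivalently, adding back the reverse of each arc of $M$ gives $D$. The matching property is immediate, since the $2$-cycles of $D$ are pairwise vertex-disjoint.

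\emph{Acyclicity.} A directed cycle in $D^\prime$ is a directed cycle in $D$, so it has length $2$. But $D^\prime$ contains no $2$-cycle, because we removed one arc from each.

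\emph{$3$-transitivity of $D^\prime$.} Let $x\to y\to z\to w$ in $D^\prime$. This is also a path in $D$, so $xw\in E$, and $x\neq w$ since a $3$-path is a path. If $xw\in E^\prime$ we are done. Otherwise $xw$ is the deleted arc of some $2$-cycle $\langle x,w\rangle$, so $wx\in E$. Then $x,y,z,w$ form a directed $4$-cycle in $D$, which is forbidden in a semi-acyclic digraph. Hence $xw\in E^\prime$.

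\emph{Flexibility.} Let $uv\in M$. The digraph obtained from $D^\prime$ by replacing $uv$ with $vu$ is the same construction applied to $D$ with a different arbitrary choice of orientation for the $2$-cycle $\langle u,v\rangle$. Since the argument above works for any choice of orientations, this digraph is again acyclic and $3$-transitive. Thus $uv$ is flexible by definition, and Proposition~\ref{flprop} is not needed.

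\emph{Faithfulness.} This is the step needing most care. Suppose $yx,wz\in M$ and $yz\in E^\prime$. Then $xy$, $yz$ and $zw$ are all arcs of $D$, since $xy$ and $zw$ are the reverses of arcs of $M$. So $x\to y\to z\to w$ in $D$.

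First I check that this is a genuine path. The vertices $x$ and $y$ are distinct. We have $z\neq y$ because $yz$ is an arc. If $yx$ and $wz$ are the same arc of $M$, then $y=w$ and $x=z$, so $yz=yx\in M$ lies on a $2$-cycle; then $xw$ is the same pair, namely $xy\in E$, but the required conclusion is $xw\in E^\prime$, and $xy\notin E^\prime$. I expect the intended reading is that $yx$ and $wz$ are distinct arcs of $M$, and I will handle this by assuming so. Then $\{x,y\}\cap\{z,w\}=\emptyset$ by the matching property, so the four vertices are distinct.

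By $3$-transitivity of $D$, $xw\in E$. If $xw\notin E^\prime$, then $\langle x,w\rangle$ is a $2$-cycle. This contradicts the vertex-disjointness of the $2$-cycles, since $x$ already lies on $\langle x,y\rangle$ and $w\neq y$. Hence $xw\in E^\prime$, and $M$ is faithful.
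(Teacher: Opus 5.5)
Your proof is correct. The acyclicity, $3$-transitivity and faithfulness steps follow the paper's argument. To exclude $xw\notin E^\prime$, the paper notes that neither $xw$ nor $wx$ can lie in $M$, which is your vertex-disjointness remark.

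The genuinely different step is flexibility. The paper verifies the four conditions of Proposition~\ref{flprop}. It gets condition~1 because a second $u$--$v$ path in $D^\prime$ together with $vu$ would be a long cycle in $D$. It simply asserts that conditions~2--4 follow from $3$-transitivity of $D$ and $uv,vu\in E$. You instead observe that reversing $uv$ in $D^\prime$ is the same construction with the opposite choice on $\langle u,v\rangle$. The two facts you already proved for arbitrary choices then give flexibility straight from the definition. Your route is shorter and more complete. Checking conditions~2--4 honestly means handling coincidences such as $u\in N^-(v)$ and $v\in N^+(u)$ in condition~2. It also means showing that the arc obtained in $E$ is not a deleted one, which is exactly your $4$-cycle argument. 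The paper's route only adds an explicit link to the characterization of flexible arcs, which this lemma does not need.

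Your treatment of the case $yx=wz$ is also right. Read literally, (\ref{extram}) fails for every nonempty $M$, so faithfulness must be understood for distinct arcs of $M$. The paper's proof tacitly assumes this too: its claim that $wx\notin M$ is false when $yx=wz$.

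One small slip to fix. Your opening claim that reversing every arc of $M$ in $D^\prime$ gives a digraph isomorphic to $D$ is false once $M\neq\emptyset$. That digraph is the construction with all choices flipped, and it has $|M|$ fewer arcs than $D$. Only your ``equivalently'' version, that adding back the reverses recovers $D$, is true, and it is the only form you actually use.
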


\begin{proof}
We first note that $D^\prime$ is acyclic by construction as we removed all $2$-cycles and since $D$ has no other cycle, removing arcs cannot create further cycles in $D^\prime$. Suppose $x,y,z,w\in V$ such that $xy,yz,zw\in E^\prime$. Since $E^\prime\subseteq E$ and $D$ is $3$-transitive, $xw\in E$. If $(x,w,x)$ is a $2$-cycle in $D$, then $(x,y,z,w,x)$ is a cycle in $D$ which is a contradiction as $D$ is semi-acyclic and so cannot have a cycle of length more than $2$. Thus $(x,w,x)$ is not a $2$-cycle in $D$ and hence $xw$ is not deleted to form $D^\prime$, i.e., $xw\in E^\prime$. Therefore $D^\prime$ is $3$-transitive. Now since $2$-cycles in $D$ are vertex disjoint, $M$ is a matching in $D^\prime$. 

Let $(u,v,u)$ be a $2$-cycle in $D$ and we replaced $(u,v,u)$ by the arc $uv$, i.e., we removed the arc $vu$ in $D^\prime$. We show that $uv$ is a flexible arc in $D^\prime$. Since $E^\prime\subseteq E$, if there is another directed path from $u$ to $v$ in $D^\prime$, then there would be the same in $D$ and this path along with $vu$ would form a directed cycle of length more than $2$. But this is not possible as $D$ is semi-acyclic. Thus the first condition of Proposition \ref{flprop} is satisfied. The other $3$ conditions follow
from the definition of the 3-transitivity and the fact $uv,vu\in E$.

Now let $x,y,z,w\in V$ such that $yx,wz\in M$ and $yz\in E^\prime$. Then we have $xy,zw\in E$. Also as $E^\prime\subseteq E$, $yz\in E$. Since $D$ is $3$-transitive, we have $xw\in E$. Further, as $yx,wz\in M$, $xw$ or $wx$ cannot belong to $M$. Thus $xw\in E^\prime$. This implies $M$ is faithful.
\end{proof}

\begin{theorem}
Let $D=(V,E)$ be an acyclic $3$-trasitive digraph. Suppose $D$ has a faithful matching $M$. Let $E^\prime=E\cup\Set{vu}{uv\in M}$. Then $D^\prime =(V,E^\prime)$ is a semi-acyclic $3$-transitive digraph.
\end{theorem}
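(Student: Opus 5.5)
We have to prove two things about $D^\prime$: that it is semi-acyclic, and that it is $3$-transitive.

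\emph{Semi-acyclicity.} Since $M$ is a matching, the new $2$-cycles $\langle u,v\rangle$ for $uv\in M$ are pairwise vertex-disjoint. $D$ itself is acyclic, so it has no $2$-cycles other than these.

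Now let $C$ be a directed cycle in $D^\prime$ of length at least $3$. It must use some reversed arc $vu$ with $uv\in M$. Replace each such reversed arc $vu$ on $C$ by the path $(v)$ back to $u$ in the original orientation. More precisely, consider the closed walk in $D$ obtained by deleting the reversed arcs. Take one reversed arc $vu$ on $C$. The remainder of $C$ gives a walk from $u$ to $v$ in $D^\prime$ of length at least $2$. I would induct on the number of reversed arcs on $C$, choosing one segment between consecutive reversed arcs.

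A cleaner route is to argue via flexibility. Condition 1 of Proposition~\ref{flprop} says there is no $u$--$v$ path in $D$ other than $(u,v)$. Acyclicity of $D$ says there is no $v$--$u$ path. Using a topological order of $D$ in which the ends of each matched arc are consecutive, no cycle of length at least $3$ can arise. I expect the cleanest argument is: if $C$ uses reversed arcs $v_1u_1,\dots,v_ku_k$, the segments from $u_i$ to $v_{i+1}$ are paths in $D$. Then $3$-transitivity together with the faithful condition \eqref{extram} can be used to shortcut and derive a $u$--$v$ path other than $(u,v)$, or a cycle in $D$.

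\emph{$3$-transitivity.} Take a $3$-path $x\to y\to z\to w$ in $D^\prime$; we must show $xw\in E^\prime$. If $x=w$, the path gives a cycle of length $3$, excluded by the first part, so assume $x\neq w$. Case on which of the three arcs are reversed arcs. Since $M$ is a matching, two consecutive arcs of the path cannot both be reversed, as that would force $x=z$ or a shared endpoint. So the reversed arcs form one of the sets $\emptyset$, $\{xy\}$, $\{yz\}$, $\{zw\}$, $\{xy,zw\}$.
\begin{itemize}
\item \emph{No reversed arc.} $xw\in E$ by $3$-transitivity of $D$.
\item \emph{One reversed arc.} Exactly the converse argument of Proposition~\ref{flprop} applies, using conditions 2--4 for the flexible arc: $v\longmapsto N^{++}(u)$, $N^-(v)\longmapsto N^+(u)$, and $N^{--}(v)\longmapsto u$ respectively. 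The only difference is that here we keep $uv$, so $xw\in E\subseteq E^\prime$ directly.
\item \emph{Two reversed arcs, $xy$ and $zw$.} Then $yx,wz\in M$ and $yz\in E$, so faithfulness \eqref{extram} gives $xw\in E$.
\end{itemize}
Degenerate cases where vertices coincide (for instance $w=y$) produce walks rather than paths and are excluded, or are handled by the $2$-cycle structure.

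The main obstacle is the acyclicity part. Ruling out long cycles that use several reversed arcs needs a careful argument: repeatedly use $3$-transitivity and condition 1 to shorten the cycle until it contains one reversed arc, which then contradicts condition 1 directly.
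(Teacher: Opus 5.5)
\textbf{The gap is the semi-acyclicity part.} Your $3$-transitivity case analysis is correct and is the same as the paper's: a single reversed arc is handled by conditions 2--4 of Proposition~\ref{flprop}, and the pair $xy,zw$ of reversed arcs by \eqref{extram}. But you never actually rule out directed cycles of length at least $3$ in $D'$, and neither route you sketch closes this.

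Your first route asserts, without proof, that $D$ has a topological order in which the ends of every matched arc are consecutive. That claim is essentially equivalent to what must be proved, and it genuinely depends on faithfulness. For example, let $D$ have arcs $u_1v_1,u_2v_2,u_1v_2,u_2v_1$ and let $M=\{u_1v_1,u_2v_2\}$. Both arcs of $M$ are flexible, yet $D'$ contains the $4$-cycle $(v_1,u_1,v_2,u_2,v_1)$.

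Your final plan is to shorten with $3$-transitivity until the cycle has one reversed arc, then use condition~1. It fails on a $4$-cycle with two opposite reversed arcs. $3$-transitivity never shortens a $4$-cycle to a cycle of length $\ge 3$, because every chord it produces closes a $2$-cycle. So this configuration never reaches a cycle with a single reversed arc and must be excluded separately.

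\textbf{The missing argument.} Prove $3$-transitivity first. Semi-acyclicity is not needed there, because the vertices of a $3$-path are distinct by definition. Your appeal to ``the first part'' to get $x\neq w$ is therefore unnecessary, and it would be circular if acyclicity is then derived from $3$-transitivity.

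Now take a shortest cycle $C=(x_0,\dots,x_{k-1},x_0)$ in $D'$ with $k\ge 3$. If $k\ge 5$, the chord $x_0x_3\in E'$ gives a cycle of length $k-2\ge 3$, so $k\in\{3,4\}$. Since $D$ is acyclic, $C$ contains a reversed arc. Since $M$ is a matching, no two reversed arcs of $C$ are consecutive.

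If $C$ has exactly one reversed arc $vu$, the rest of $C$ is a $u$--$v$ path in $D$ of length $2$ or $3$. This contradicts condition~1 of Proposition~\ref{flprop}.

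Otherwise $C=(v,u,y,x,v)$ with $uv,xy\in M$ and $uy,xv\in E$. Applying \eqref{extram} to $uv,xy\in M$ and $uy\in E$ gives $vx\in E$. Together with $xv\in E$, this contradicts acyclicity of $D$.

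You could equivalently use the chord $vx\in E'$ given by $3$-transitivity of $D'$. Since $xv\in E$, acyclicity of $D$ forces $vx\notin E$, so $xv\in M$. That contradicts $x$ being matched to $y$.

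This last case is exactly where faithfulness is indispensable for semi-acyclicity, and it is the case your sketch leaves out.
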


\begin{proof}
Let $x,y,z,w\in V$ such that $xy,yz,zw\in E^\prime$. If $xy,yz,zw\in E$, then $xw\in E\subseteq E^\prime$. Suppose $yz\in E^\prime\smallsetminus E$. Then $zy\in M$. Since $M$ is a matching, $xy,zw\not\in M$ and thus $xy,zw\in E$. Now since $zy$ is flexible, by Proposition \ref{flprop}, $N^-(y)\longmapsto N^+(z)$. As $x\in N^-(y)$ and $w\in N^+(z)$, we have $xw\in E\subseteq E^\prime$. Next, let $xy\in E^\prime\smallsetminus E$, i.e., $yx\in M$ and $zw\in E$. Then $yz\in E$ as $xy$ and $yz$ have a common endpoint. So $w\in N^{++}(y)$ which implies $xw\in E$ as $x\longmapsto N^{++}(y)$ holds for the flexible arc $yx$. Similarly, if $xy,yz\in E$ and $wz\in M$, then $xw\in E$ as $N^{--}(z)\longmapsto w$ for the flexible arc $xw$. Finally, if $xy,zw\in E^\prime\smallsetminus E$, then $yx,wz\in M$ and $yz\in E$ and so by (\ref{extram}), we have $xw\in E$. Therefore $D^\prime$ is $3$-transitive.

Suppose $D^\prime$ has a directed cycle of length more than $2$. Since $D^\prime$ is $3$-transitive, we have a directed cycle $C$ in $D^\prime$ of length $3$ or $4$. Suppose $C$ is of length $4$. Now since $D$ is acyclic, $C$ must contain some arcs $vu$ such that $uv\in M$. If $C$ contains only one such arc, then deleting the arc $vu$ from $C$ we obtain a directed path from $u$ to $v$ in $D$ which is a contradiction as $uv$ is a flexible arc in $D$. Again if $C$ contains arcs $vu, yx$ such that $uv,xy\in M$. Then $uy,xv\in E$ as $C=(v,u,y,x,v)$. On the other hand, we have $vx\in E$ by (\ref{extram}), which is a contradiction as $xv\in E$ and $D$ has no $2$-cycle. Finally, let $C=(x,v,u,x)$ is a $3$-cycle and contain an arc $vu$ such that $uv\in M$. Then $ux,xv\in E$ which yields a directed path $(u,x,v)$ in $D$, contradicting that $uv$ is a flexible arc. Therefore $D^\prime$ has only $2$-cycles and since $M$ is a matching, these $2$-cycles are vertex disjoint. Hence $D^\prime$ is semi-acyclic.
\end{proof}

\section{3-Quasi-transitive digraphs}

\begin{lemma} \cite{GGU} \label{strong-3-quasi-trans}
Let $D$ be a 3-quasi-transitive digraph. If $D$ is strong then it is either semicomplete or semicomplete bipartite or a digraph in the class $F_n=(V_n,E_n)$, where $V_n=\set{a,b,d}\cup\Set{c_i}{i\in [n]}$ such that each quadruple $(a,b,c_i,d,a)$ form a $4$-cycle with a diagonal arc $bd$ (see Figure \ref{fnn1}).
\end{lemma}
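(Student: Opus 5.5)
The plan is a case analysis on the cycle structure of a strong 3-quasi-transitive digraph $D$. The basic tool is the defining property: whenever $(u,v,w,z)$ is a directed path, $u$ and $z$ are adjacent in one direction or the other.

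\textbf{Step 1 (distances).} First I would prove a distance lemma for strong $D$. Let $P=(u_0,u_1,\dots,u_k)$ be a shortest $(u_0,u_k)$-path with $k\ge 3$. Minimality forbids $u_0\to u_3$, so $u_3\to u_0$. Applying the same argument to the shortest subpaths $(u_i,\dots,u_{i+3})$ gives $u_{i+3}\to u_i$ for every $i$. These backward arcs together with the forward arcs of $P$ produce short paths between $u_0$ and $u_k$ of the right parity. Using $3$-quasi-transitivity on them, I expect to show that $k\ge 4$ yields an arc that shortens $P$, a contradiction. The conclusion I want is that $D$ has diameter at most $3$, and that $d(u,v)=3$ forces $v\to u$.

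\textbf{Step 2 (no odd cycles).} Suppose $D$ has no directed cycle of odd length. Since $D$ is strong, every closed walk is a union of cycles, so every closed walk has even length. Fix a vertex $r$ and $2$-colour each vertex by the parity of the length of any $(r,x)$-path; strongness together with the parity of closed walks makes this well defined. Every arc joins opposite colour classes, so $D$ is bipartite. Now take $x,y$ in opposite classes. Then $d(x,y)$ is odd, and by Step 1 it is $1$ or $3$; in the second case Step 1 gives $y\to x$. Either way $x$ and $y$ are adjacent, so $D$ is semicomplete bipartite.

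\textbf{Step 3 (an odd cycle exists).} Take a shortest odd cycle $C$. Applying the $3$-path property around $C$ produces chords unless $|C|=3$, so I would show that $D$ contains a $3$-cycle. Then I would grow a maximal semicomplete subdigraph $S$ containing that triangle. For a vertex $x\notin S$ with an arc to or from $S$, I would follow $2$- and $3$-paths from $x$ through $S$ and use the $3$-path property to force $x$ to be adjacent to every vertex of $S$. This either contradicts the maximality of $S$ or pins down the exceptional configuration.

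\textbf{Main obstacle.} The exceptional configuration is where $x$ is adjacent to only part of $S$. In that case the triangle is forced to lie inside a $4$-cycle $(a,b,c,d,a)$ with the diagonal $bd$. I expect the proof to show the following.
\begin{itemize}
\item Every further vertex attaches only as another copy $c_i$ with $b\to c_i\to d$.
\item The vertices $c_i$ are pairwise nonadjacent.
\item No vertex can attach in any other way without creating a longer odd cycle or a $3$-path whose endpoints are nonadjacent.
\end{itemize}
This is exactly the family $F_n$. I expect this exceptional case to be the hardest part of the proof. My first attempt would be to examine directly, case by case, how a single vertex can attach to the $4$-cycle with its diagonal.
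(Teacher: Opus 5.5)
The paper gives no proof of this lemma (it is quoted from \cite{GGU}), so your outline has to stand on its own. Step~1 does not: its conclusion is false, so no argument of the form ``$k\ge 4$ yields an arc that shortens $P$'' can exist. Every semicomplete and every semicomplete bipartite digraph is $3$-quasi-transitive, and strong ones can have arbitrarily large diameter. For instance, $u_0\to u_1\to u_2\to u_3\to u_4$ together with $u_3\to u_0$ and $u_4\to u_1$ is a strong semicomplete bipartite digraph with $d(u_0,u_4)=4$. Likewise, the tournament $v_1\to v_2\to\cdots\to v_n$ with all other arcs pointing backwards has $d(v_1,v_n)=n-1$.

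What Step~2 actually needs is only a parity statement: if $(u_0,\dots,u_k)$ is a shortest path with $k\ge 3$ odd, then $u_k\to u_0$. This follows by induction on $k$. For $k\ge 5$, the sequence $(u_k,u_{k-3},u_{k-2},u_0)$ is a $3$-path, using $u_k\to u_{k-3}$ and, from the shortest subpath of odd length $k-2$, $u_{k-2}\to u_0$. So $u_0,u_k$ are adjacent, and minimality forces $u_k\to u_0$. With this replacement Step~2 is correct. So is your reduction from a shortest odd cycle $C$ to a triangle: forward odd chords are excluded by minimality, and the $3$-paths $(c_{i+j+2},c_{i+j-1},c_{i-1},c_i)$ push backward chords $c_{i+j}\to c_i$ from odd $j$ to $j+2$. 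The chord with $j=|C|-2$ closes a triangle with a backward $3$-chord.

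The main gap is the absorption step in Step~3. It carries the whole content of the lemma, and you only state it as an expectation; the mechanism you propose does not work as stated.

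\begin{itemize}
\item You cannot assume a priori that a maximal semicomplete subdigraph containing the triangle is strong, so no Hamiltonian cycle is available along which to push adjacencies. If you instead take $S$ maximal strong semicomplete, a vertex adjacent to all of $S$ in one direction only cannot be absorbed, and must be handled separately via a return path.
\item More importantly, $x$ lies in the same strong component as $S$, so the $3$-path property gives adjacency without a direction. In Lemma~\ref{quasilem} the direction came from Observation~\ref{notrev}, which is unavailable here. From $x\to y_0$, the $3$-path $(x,y_0,y_1,y_2)$ may yield $y_2\to x$, after which nothing propagates.
\item This stall is exactly how $F_n$ arises: $c_i\to d\to a\to b$ only forces $b\to c_i$, and nothing forces $c_i$ to be adjacent to $a$.
\end{itemize}

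So you still have to prove four things: that a stall forces $|S|=3$; that $x$ meets the triangle in the pattern $b\to x\to d$; that every other vertex of $D$ is adjacent to $S$ and attaches the same way; and that the $c_i$ are pairwise non-adjacent.

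Finally, the target configuration is not rigid. Adding $b\to a$ and/or $a\to d$ to $F_n$, or adding $d\to b$ alone when $n=1$, keeps the digraph strong and $3$-quasi-transitive, because no $3$-path can use such an arc (e.g.\ $N^-(b)=\{a\}$ and $N^+(a)=\{b\}$). Your case analysis must therefore also determine which symmetric arcs the triangle $a,b,d$ may carry. The ``class $F_n$'' in the statement has to be read as including these variants.
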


\begin{figure}[ht]
\begin{center}
\includegraphics[scale=0.7]{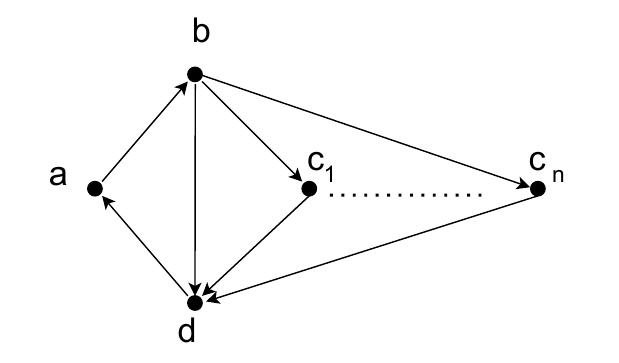}
\caption{Directed graphs $F_n$ for $n\in\Nat$.}\label{fnn1}
\end{center}
\end{figure}

In the following, we provide a description of $3$-quasi-transitive digraphs, in general.
The description is similar to that in the case of $3$-transitive graphs. One important 
fact we use here is that every strong semicomplete digraph is Hamiltonian \cite{CM}.

Let $D=(V,E)$ be a $3$-quasi-transitive digraph and let $\mathscr{S}$ be the set of all 
strong components of $D$. By Lemma \ref{strong-3-quasi-trans}, $\mathscr{S}$ can be 
partitioned into $\mathscr{S}_1$, $\mathscr{S}_2$ and $\mathscr{S}_3$ where
\begin{tabbing} 
\hspace{1cm} \= $\mathscr{S}_1 =\Set{S\in\mathscr{S}}{|S|=1}$\\ 
\> $\mathscr{S}_2=\Set{S\in\mathscr{S}}{S=X_S\cup Y_S\text{ is semicomplete bipartite, }|S|>1}$ \\ 
\> $\mathscr{S}_3=\Set{S\in\mathscr{S}}{S\text{ is either semicomplete or isomorphic to 
    $F_n$, }|S|>2}.$
\end{tabbing} 

We define a digraph $D^\# =(V^\# ,E^\# )$ with the vertex set
$$V^\# =\mathscr{S}_1\cup\Set{X_S}{S\in\mathscr{S}_2}\cup \Set{Y_S}{S\in\mathscr{S}_2}\cup\mathscr{S}_3,$$
and the arc set
$$E^\# = \{(M,T): M, T \in V^\#, M \neq T,\ \mbox{and}\ x\to y\ \mbox{for\ some}\
        x \in M, y \in T\}.$$
We call $D^\#$ the {\em skeleton} of $D$.

\begin{observation}\label{notrev}
Let $D=(V,E)$ be a $3$-quasi-transitive digraph. Let $S,T\in\mathscr{S}$ such that $x\rightarrow y$ for some $x\in S$ and $y\in T$, $S\neq T$. Then $y^\prime\not\rightarrow x^\prime$ for any $x^\prime\in S$ and $y^\prime\in T$.
\end{observation}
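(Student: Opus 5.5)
The argument is short and rests only on the definition of strong components; 3-quasi-transitivity is not even needed. I will argue by contradiction. Suppose $x\rightarrow y$ with $x\in S$, $y\in T$, $S\neq T$, and suppose also that $y^\prime\rightarrow x^\prime$ for some $x^\prime\in S$ and $y^\prime\in T$. The aim is to show that $D[S\cup T]$ is strong. That contradicts the maximality of the strong components $S$ and $T$, since distinct strong components are disjoint and their union cannot be strongly connected.

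To show strong connectivity, take any $s\in S$ and $t\in T$.

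\begin{itemize}
\item \emph{From $s$ to $t$.} Since $S$ is strong, there is a directed path in $S$ from $s$ to $x$. Follow it with the arc $xy$. Since $T$ is strong, there is then a path in $T$ from $y$ to $t$. Concatenating gives a directed walk from $s$ to $t$, which contains a directed path.
\item \emph{From $t$ to $s$.} Take a path in $T$ from $t$ to $y^\prime$, then the arc $y^\prime x^\prime$, then a path in $S$ from $x^\prime$ to $s$.
\end{itemize}

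Vertices inside the same component are already mutually reachable. Hence every pair of vertices of $S\cup T$ is mutually reachable, and $S\cup T$ is strong, which is the desired contradiction.

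The only care needed is with degenerate cases such as $s=x$ or $t=y$, where the internal paths are trivial. These cause no problem, and I expect no real obstacle here. This observation simply mirrors the first paragraph of the proof of Lemma~\ref{sa1}.
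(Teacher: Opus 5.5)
Your proof is correct and takes the same approach as the paper: a reverse arc $y^\prime\rightarrow x^\prime$ together with $x\rightarrow y$ would make $S\cup T$ strong, which contradicts the maximality of the distinct strong components $S\neq T$. You spell out the reachability details that the paper leaves implicit, and you are right that 3-quasi-transitivity plays no role.
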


\begin{proof}
Since $S$ and $T$ are strong components, both way adjacency would make $S\cup T$ a strong subdigraph of $D$, which is a contradiction as $S\neq T$.
\end{proof}

Let $D=(V,E)$ be a $3$-quasi-transitive digraph and $x,y\in V$. For convenience, we write $x\rightsquigarrow y$ if there is a directed path from $x$ to $y$.

\begin{lemma}\label{quasilem}
Let $D=(V,E)$ be a $3$-quasi-transitive digraph. Let $M\in V^\#$ and $x\in V\setminus$. 
Then
\begin{enumerate}
\item $x\rightarrow y$ for some $y\in M$ $\Longrightarrow x\longmapsto M$.
\item $y\rightarrow x$ for some $y\in M$ $\Longrightarrow M\longmapsto x$.
\end{enumerate}
\end{lemma}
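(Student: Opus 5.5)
Throughout, I read the hypothesis as: $x$ lies outside the strong component $S$ of $D$ that contains $M$. This covers $M=S$ when $M\in\mathscr{S}_1\cup\mathscr{S}_3$, and $M\in\{X_S,Y_S\}$ when $S\in\mathscr{S}_2$. This reading is needed: if $x$ lay in the other colour class of the same semicomplete bipartite component, the conclusion can fail.

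The engine is the following consequence of 3-quasi-transitivity and Observation~\ref{notrev}. Suppose $x\to a$ with $a\in S$, and $(a,w,z)$ is a directed path inside $S$ avoiding $x$. Then $(x,a,w,z)$ is a directed path of length $3$, so $x\to z$ or $z\to x$. The arc $z\to x$ is impossible, since $x\to a$ already gives an arc from $x$'s component into $S$. Hence $x\to z$. So, letting $A=\{a\in M: x\to a\}\neq\emptyset$, it suffices to show that every $z\in M\setminus A$ is the end of a directed $2$-path in $S$ that starts in $A$. Because $x$ dominates each newly found vertex, $A$ can be enlarged step by step until it is all of $M$. For $M\in\mathscr{S}_1$ there is nothing to prove.

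I would then go through the three types given by Lemma~\ref{strong-3-quasi-trans}.

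\emph{$S$ semicomplete with $|S|\ge3$.} Suppose $A\ne M$. By strongness some arc leaves $A$; say $a\to z$ with $a\in A$ and $z\notin A$. Since $S$ is strong, $z$ has a path back to $a$. I would use the standard fact that in a strong semicomplete digraph every vertex lies on a cycle of length $3$ (this also follows from Hamiltonicity \cite{CM}). Apply it to $z$, say the cycle $(z,u,v,z)$, and note that semicompleteness compares $a$ with $u$ and $v$. The aim is to exhibit a $2$-path $a\to w\to z$, or $a'\to w\to z$ for some other $a'\in A$. Failing that, one of the comparisons should give a $2$-path $a\to w\to u$ that places $u$, and then $z$, in $A$. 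An alternative route is to walk along a Hamiltonian cycle $(y_0,\dots,y_{n-1})$ starting from a dominated vertex; this gives $x\to y_{2k}$ for all $k$. Chords then fix the other parity class, using that $y_i$ and $y_{i+2}$ are always adjacent.

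\emph{$S=X_S\cup Y_S$ semicomplete bipartite and $M=X_S$.} The needed $2$-paths have the form $a\to w\to z$ with $w\in Y_S$. Strongness supplies a path from $a$ to $z$, which alternates between $X_S$ and $Y_S$ and hence has even length. Semicompleteness between the classes lets me shortcut such a path to length $2$: either a shortcut arc exists in the forward direction, or the reversed arc yields a $2$-path from $a$ to an intermediate $X_S$-vertex, which then enters $A$, and I repeat. Thus $A$ grows along the path until it reaches $z$.

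\emph{$S\cong F_n$.} Here I check directly. The $2$-paths $a\to b\to c_i$, $b\to c_i\to d$, $b\to d\to a$, $c_i\to d\to a$, $d\to a\to b$ show that dominating any one vertex propagates to all of $S$.

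Part 2 is the dual argument. Reverse all arcs: the reverse of a 3-quasi-transitive digraph is 3-quasi-transitive, and it has the same strong components. Alternatively, use paths $z\to w\to a\to x$ directly.

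I expect the main obstacle to be the semicomplete case, specifically producing a $2$-path from the current set $A$ to every remaining vertex. My first attempt there is the $3$-cycle argument through $z$ combined with the arc $a\to z$ leaving $A$.
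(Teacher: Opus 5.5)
Your proposal is correct and follows essentially the paper's proof. The shared ingredients are: a $3$-path from $x$ into the component, oriented by Observation~\ref{notrev}; the case split from Lemma~\ref{strong-3-quasi-trans}; stepping along an alternating path in the bipartite case; a Hamiltonian cycle plus a $y_1y_3$ chord in the semicomplete case; a direct check for $F_n$; and reversal for part 2. In the bipartite case your shortcutting discussion is unnecessary, because the consecutive arcs $p_{2j}\to p_{2j+1}\to p_{2j+2}$ of the path already supply the needed $2$-paths, which is exactly the paper's argument.

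There is one slip to fix. Your list of $2$-paths in $F_n$ omits $a\to b\to d$. With only the five you list, domination of $a$ or of some $c_i$ never reaches $b$ or $d$. Add it, as the paper does: $x\to a\to b\to d$ gives $x\to d$, and then $x\to d\to a\to b$ gives $x\to b$.

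Your $3$-cycle route in the semicomplete case also closes at once, with no comparisons needed. The path $(x,a,z,u)$ gives $x\to u$, and then $(x,u,v,z)$ gives $x\to z$; if $u=a$, use $(x,a,v,z)$ directly.
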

\begin{proof}
We first note that, by Observation \ref{notrev}, the reverse direction is impossible throughout the proof. Hence whenever $3$-quasi-transitivity gives adjacency between $x$ and a vertex of $M$, the arc must be directed from $x$ to that vertex in case (1) and the opposite in case (2). 

{\bf (1):}\ If $M\in\mathscr{S}_1$, then we have nothing to prove. Let $M=X_S$ for some $S\in\mathscr{S}_2$. Let $x\in V\smallsetminus S$ such that $x\rightarrow y$ for some $y\in M$. Let $z\in M$. Since $S$ is a strong component of $D$ and $S=X_S\cup Y_S$ is bipartite, we have there is a directed path $y=y_0\rightarrow y_1\rightarrow y_2\rightarrow\cdots\rightarrow y_{2k}=z$ for some $k\in\Nat$, where $y_i\in Y_S$ for odd $i$ and $y_i\in X_S$ for even $i$. Now we have $x\rightarrow y\rightarrow y_1\rightarrow y_2$. Then we have $x\rightarrow y_2$ by the comment made at the beginning. If $k=1$, we are done. Otherwise, we have $x\rightarrow y_2\rightarrow y_3\rightarrow y_4$. By the same argument, we have $x\rightarrow y_4$. We repeat the process until we reach $x\rightarrow y_{2k}=z$ as required. The case is similar for $M=Y_S$ for some $S\in \mathscr{S}_2$. 

Let $M\in \mathscr{S}_3$. Then $M$ is a strong semicomplete digraph or $M$ is isomorphic to $F_n$ for some $n\in\Nat$. First suppose $M$ is strong and semicomplete. Let $x\in V\smallsetminus M$ such that $x\rightarrow y$ for some $y\in M$. Since $M$ is strong and semicomplete, we have $M$ is Hamiltonian. Let $y=y_0\rightarrow y_1\rightarrow y_2\rightarrow\cdots\rightarrow y_k\rightarrow y$ be a Hamiltonian cycle in $M$ with $k\geq 2$ (as $|M|>2$). Now as before, we have $x\rightarrow y_i$ for all even $i$. Suppose $k>2$. Since $M$ is semicomplete, either $y_1\rightarrow y_3$ or $y_3\rightarrow y_1$. If $y_1\rightarrow y_3$, then $x\rightarrow y\rightarrow y_1\rightarrow y_3$ which implies $x\rightarrow y_3$ and hence by rotating the cycle, we have $x\rightarrow y_i$ for all odd $i$ also. Similar case happens if $y_3\rightarrow y_1$ as $x\rightarrow y_2\rightarrow y_3\rightarrow y_1$. So let $k=2$ and $M$ is a $3$-cycle $(y,y_1,y_2,y)$ with possibly some symmetric arcs. As before we have $x\rightarrow y_2$ and so we have $x\rightarrow y_2\rightarrow y\rightarrow y_1$ implying $x\rightarrow y_1$ as well. 

Finally, let $M\cong F_n$ for some $n\in \Nat$. Then every vertex of $M$ belongs to a $4$-cycle, say, $(a,b,c_i,d,a)$. Then as in the previous case, if we have $x\rightarrow a$, then $x\rightarrow a\rightarrow b\rightarrow c_i$ implies $x\rightarrow c_i$ and $x\rightarrow a\rightarrow b\rightarrow d$ implies $x\rightarrow d$. But then $x\rightarrow d\rightarrow a\rightarrow b$ implies $x\rightarrow b$. Since $c_i$ was taken arbitrarily, we have $x\rightarrow c_i$ for all $i\in [n]$. 

Similarly, $x\rightarrow b$ implies $x\rightarrow a$ as $b\rightarrow d\rightarrow a$ in $F_n$ and $x\rightarrow d$ implies $x\rightarrow b$ as $d\rightarrow a\rightarrow b$ in $F_n$. Lastly, if $x\rightarrow c_i$ for some $i\in [n]$, then we have $x\rightarrow a$ as $c_i\rightarrow d\rightarrow a$ in $F_n$. So in all cases we have $x\rightarrow a$ and hence we have $x\longmapsto M$ as shown in the above paragraph. 

{\bf (2):}\ It is interesting to note that, the converse of a 3-quasi-transitive digraph (obtained from the given digraph by reversing all the arc) is again 3-quasi-transitive and has the same strong components. Thus part (2) follows from part (1) applied to the converse digraph.
\end{proof}

\begin{theorem} \label{sa2}
Let $D$ be a $3$-quasi-transitive graph. Then $D^\#$ is a semi-acyclic $3$-transitive 
digraph.
\end{theorem}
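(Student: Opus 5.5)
The plan follows the proof of Lemma~\ref{sa1}, using Lemma~\ref{quasilem} in place of Lemma~\ref{lemhash} and Observation~\ref{notrev} to control the orientation of arcs.

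\textbf{Semi-acyclicity.} Take distinct $M,T\in V^\#$. If $M=X_S$ and $T=Y_S$ for some $S\in\mathscr{S}_2$, then $M\leftrightarrow T$ in $D^\#$, because $S$ is strong and so has arcs in both directions between its parts. In every other case $M$ and $T$ lie in different strong components, and Observation~\ref{notrev} rules out arcs in both directions. Now suppose $D^\#$ has a directed cycle of length at least $3$. Lifting each of its arcs to an arc of $D$ and joining consecutive arcs by paths inside the relevant strong components would merge at least two distinct strong components into one strong subdigraph. The only escape is a cycle that stays inside the two parts of a single $S\in\mathscr{S}_2$, and such a cycle has length $2$. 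So every cycle of $D^\#$ is a $2$-cycle $\langle X_S,Y_S\rangle$. These are vertex-disjoint because distinct strong components are disjoint, so $D^\#$ is semi-acyclic.

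\textbf{3-transitivity.} Let $X\to Y\to Z\to W$ be a path in $D^\#$, realised by arcs $x\to y$, $y'\to z$, $z'\to w$ in $D$. If $x\notin Y$, Lemma~\ref{quasilem}(1) gives $x\longmapsto Y$. The same lemma (part (2) where appropriate) lets us replace $y'$ by $y$ and $z'$ by $z$. This yields a genuine $3$-path $x\to y\to z\to w$ in $D$, and $3$-quasi-transitivity gives $x\to w$ or $w\to x$. We need $X\to W$ in $D^\#$, so the case $w\to x$ must be excluded. If $X$ and $W$ lie in different strong components, $w\to x$ together with the path from $x$ to $w$ would put them in the same strong component, a contradiction. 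Hence $x\to w$ and $X\to W$.

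The remaining cases are where some of $X,Y,Z,W$ coincide or lie in the same $S\in\mathscr{S}_2$. Two of these need care. If $X=W$, the conclusion is vacuous or excluded by semi-acyclicity. If $X=X_S$ and $W=Y_S$, we need $X\to W$, which holds because $S$ is strong, so arcs $X_S\to Y_S$ exist.

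A further technicality is that Lemma~\ref{quasilem} requires $x\notin M$. When consecutive vertices of the path are the two parts of one bipartite component, the replacement $y'\rightsquigarrow y$ must instead use $x\longmapsto Y$ directly from Lemma~\ref{quasilem}, since $x$ lies outside $Y$ (it is in the other part). I expect this case bookkeeping, together with checking that the $3$-path found in $D$ really has distinct vertices, to be the main obstacle. The underlying ideas (blow-up via Lemma~\ref{quasilem} and orientation via strong components) are straightforward.
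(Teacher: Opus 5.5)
Your plan is the paper's. Semi-acyclicity comes from strong components, and that part is fine. 3-transitivity comes from lifting $X\to Y\to Z\to W$ to a $3$-path of $D$ and orienting the arc supplied by 3-quasi-transitivity; your orientation argument via $x\rightsquigarrow w$ is fine.

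The gap is in the lifting step, exactly where you flagged a technicality. When an arc of the path joins the two parts $X_S,Y_S$ of some $S\in\mathscr{S}_2$, you propose to get $x\longmapsto Y$ from Lemma~\ref{quasilem} ``since $x$ lies outside $Y$''. The lemma does not give this. Its proof takes $x$ outside the whole strong component $S$, because it needs Observation~\ref{notrev} to orient the arcs produced by 3-quasi-transitivity. Inside $S$ the conclusion is simply false. For example, the directed $4$-cycle $a\to b\to c\to d\to a$ is a strong 3-quasi-transitive digraph with parts $\{a,c\}$ and $\{b,d\}$. Here $a\to b$ but $d\to a$, so $a\longmapsto\{b,d\}$ fails. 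Your replacement procedure therefore breaks precisely on these intra-component arcs.

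The missing observation is that two consecutive arcs of a path in $D^\#$ cannot both join the two parts of a bipartite component. Since $X_S$ has the unique partner $Y_S$, that would force $X=Z$ or $Y=W$. So each intra-component arc is flanked by arcs between different strong components. For those arcs Lemma~\ref{quasilem} applies legitimately: use (2), then (1), to get complete joins $U\longmapsto V$.

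The repair is then to keep the given realizing arc of $D$ for each intra-component arc and choose the remaining vertices freely. This produces $x\to y\to z\to w$ in $D$. Its vertices are automatically distinct, because distinct members of $V^\#$ are disjoint sets, so your other worry is moot. This is the paper's distinction between special arcs $U\dashrightarrow V$ and complete arcs $U\longmapsto V$. The paper also treats back-and-forth walks with repeated skeleton vertices, which cannot occur for genuine paths.
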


\begin{proof}
Let $M,T\in V^\#=V(D^\#)$. Suppose $M=X_S$, $T=Y_S$ (or, $M=Y_S$, $T=X_S$) for some $S\in\mathscr{S}_2$. Now since $S$ is a strong component, there are $x_1,x_2\in M$ (may not be distinct), $y_1,y_2\in T$ (may not be distinct) such that $x_1\rightarrow y_1$ and $y_2\rightarrow x_2$. Thus we have $M\leftrightarrow T$ in $D^\#$. Otherwise $M$ and $T$ are different strong components or being part of different strong components. Thus if there is $x\rightarrow y$ for some $x\in M$ and $y\in T$, then there cannot be any $x^\prime\in M$, $y^\prime \in T$ such that $y^\prime \rightarrow x^\prime$ by Observation \ref{notrev}. Again, if there is a directed cycle in $D^\#$, then union of all of them would be a subdigraph of the same strong component. Thus the only possible cycle is a $2$-cycle. Also since strong components are vertex disjoint, these $2$-cycles (if they exist) are also vertex disjoint. Therefore $D^\#$ is semi-acyclic.

Let $X\rightarrow Y\rightarrow Z\rightarrow W$ in $D^\#$ with $X\neq W$. If the three skeleton arcs can be realized by a directed path $x\rightarrow y\rightarrow z\rightarrow w$ in D, then by $3$-quasi-transitivity of $D$, we have either $x\rightarrow w$ or $w\rightarrow x$. If $X$ and $W$ are parts of different strong components, then Observation \ref{notrev} gives $x\rightarrow w$ and consequently we get $X\rightarrow W$ in $D^\#$. If $X$ and $W$ are the two parts of the same split semicomplete bipartite component, then $X\leftrightarrow W$ holds.

The only obstruction to such a realization occurs when consecutive skeleton arcs pass back and forth between $X_S$ and $Y_S$ of the same semicomplete bipartite strong component $S$. Let us call a skeleton arc $U\rightarrow V$ in $D^\#$ as {\em special} if it is one of the two arcs between $X_S$ and $Y_S$ for some $S\in\mathscr{S}_2$ and we denote it by $U\dashrightarrow V$ and if the arc is not special, then by $U\longmapsto V$ to mean that we have $u\rightarrow v$ uniformly for all $u\in U$, $v\in V$. 

Now, if there are two consecutive special arcs, then we are just moving back and forth between $X_S$ and $Y_S$. In that case the desired conclusion $X\rightarrow W$ is immediate from one of the already existing arcs $X_S\leftrightarrow Y_S$, or from the fact that the repeated skeleton vertices force the conclusion to be one of the given arcs. For example, if we have, $X\dashrightarrow Y\dashrightarrow Z\longmapsto W$, then $X=Z=X_S$, $Y=Y_S$ (or, $X=Z=Y_S$, $Y=X_S$) for some $S\in\mathscr{S}_2$. Thus $X=Z\longmapsto W$ and so we have $X\rightarrow W$ in $D^\#$. The case $X\longmapsto Y\dashrightarrow Z\dashrightarrow W$ is similar. Let $X\dashrightarrow Y\dashrightarrow Z\dashrightarrow W$, then $X=Z=X_S$, $Y=W=Y_S$ or, $X=Z=Y_S$, $Y=W=X_S$ for some $S\in\mathscr{S}_2$. In any case, $\set{X,W}=\set{X_S,Y_S}$. Hence $X\leftrightarrow W$ in $D^\#$. Therefore $D^\#$ is $3$-transitive as required.
\end{proof}

We now prove Theorem~\ref{main2} which is reiterated as follows.

\begin{theorem} \label{3-quasi-trans}
Let $D$ be a $3$-quasi-transitive digraph. Then there exists a semi-acyclic 3-transitive
digraph $H$ such that $D = H[\mathscr{X},\mathscr{Y},\mathscr{Z}]$ where
\begin{itemize}
\item $\mathscr{X} = \{X_i: i \in I\}$, $\mathscr{Y} = \{Y_i: i \in I\}$ and
      $\mathscr{Z} = \{Z_j: j \in J\}$.
\item $(X_i, Y_i)$ is a strong semicomplete bipartite digraph for each $i \in I$.
\item $Z_j$ is either a strong semicomplete digraph or isomorphic to $F_n$ for some 
      $n \geq 4$.
\end{itemize}
\end{theorem}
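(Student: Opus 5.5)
The plan is to mirror the proof of Theorem~\ref{3-trans}, using the $3$-quasi-transitive tools in place of the $3$-transitive ones. We take $H=D^\#$, the skeleton of $D$, and set
$\{(X_i,Y_i): i\in I\}=\mathscr{S}_2$ (with $X_i=X_S$, $Y_i=Y_S$) and $\{Z_j: j\in J\}=\mathscr{S}_1\cup\mathscr{S}_3$. Theorem~\ref{sa2} shows that $H$ is a semi-acyclic $3$-transitive digraph. Its $2$-cycles are exactly the pairs $\langle X_S,Y_S\rangle$ with $S\in\mathscr{S}_2$: by the proof of Theorem~\ref{sa2} these are the only pairs of skeleton vertices joined in both directions.

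Next we check the equality $D=H[\mathscr{X},\mathscr{Y},\mathscr{Z}]$ arc by arc. Arcs inside a strong component are those of $T_i$ or of $Z_j$ by construction. The arcs between $X_S$ and $Y_S$ are precisely the arcs of the bipartite digraph $T_i=S$, and no arcs lie inside $X_S$ or inside $Y_S$. Now suppose $M\ne T$ are skeleton vertices not forming a $2$-cycle, and some $x\in M$ has an arc to some $y\in T$. Applying Lemma~\ref{quasilem}(1) to $x$ and $T$ gives $x\longmapsto T$. Applying Lemma~\ref{quasilem}(2) to each $y'\in T$ and $M$ then gives $M\longmapsto y'$. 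Hence $M\longmapsto T$, which is exactly the non-symmetric arc $(M,T)$ of $H$. Conversely, every non-symmetric arc of $H$ comes from some arc of $D$, so nothing extra is added, and Observation~\ref{notrev} rules out arcs in the opposite direction.

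It remains to identify the pieces as the statement requires. Members of $\mathscr{S}_2$ are strong semicomplete bipartite digraphs, because they are strong components. A singleton in $\mathscr{S}_1$ is a (trivially strong) semicomplete digraph. By Lemma~\ref{strong-3-quasi-trans}, a member of $\mathscr{S}_3$ is either a strong semicomplete digraph or a copy of some $F_n$. For the bound $n\ge 4$ I would read the index of $F_n$ as the order of the digraph, which is the reading consistent with the requirement $|S|>2$ in the definition of $\mathscr{S}_3$. Every $F$-type digraph contains $a,b,d$ and at least one $c_i$, and it has exactly $|\{c_i\}|+3\ge 4$ vertices. So any member of $\mathscr{S}_3$ that is not semicomplete has order at least $4$. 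Such a member genuinely fails to be semicomplete, since $a$ and $c_i$ are non-adjacent, so it lands in the $F_n$ alternative with $n\ge 4$. The only possible $F$-like object on three vertices would be the triangle $(a,b,d)$, which is semicomplete and so already falls under the first alternative.

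I expect this last point to be the main obstacle: the statement's bound depends on how $F_n$ is indexed in Figure~\ref{fnn1}. The structural part follows directly from Theorem~\ref{sa2} and Lemma~\ref{quasilem}, but the bound $n\ge4$ holds only under the order reading above. If the figure instead indexes $F_n$ by the number of $c_i$'s, then $F_1,F_2,F_3$ are strong, not semicomplete, and not bipartite (each contains the triangle $a\to b\to d\to a$). They cannot then be absorbed into another alternative, and the bound would have to be read as a statement about the order of the substituted digraph.
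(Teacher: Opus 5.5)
Your proposal is correct and follows the paper's proof: you take $H=D^\#$, get semi-acyclicity and 3-transitivity from Theorem~\ref{sa2}, and get the substitution structure from Lemma~\ref{quasilem}, and your arc-by-arc check (part (1) and then part (2), applied only to skeleton vertices lying in different strong components) makes explicit what the paper leaves implicit. Your worry about the bound is justified, because Lemma~\ref{strong-3-quasi-trans} indexes $F_n$ by the number of $c_i$'s, so $F_1,F_2,F_3$ genuinely occur (e.g.\ $D=F_1$ itself is strong and 3-quasi-transitive, yet neither semicomplete nor bipartite); hence ``$n\geq 4$'' is correct only when read as order at least $4$, i.e.\ $n\geq 1$ in the paper's indexing, a point the paper's proof never addresses.
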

\begin{proof}
Let $H$ be the skeleton of $D$, and let $\{(X_i,Y_i): i\in I\} = \mathscr{S}_2$ and $\{Z_j: j\in J\} = \mathscr{S}_1 \cup \mathscr{S}_3$. Then $H$ is a semi-acyclic 3-transitive by Lemma \ref{sa2} and 
$D = H[\mathscr{X},\mathscr{Y},\mathscr{Z}]$ by Lemma \ref{quasilem}.
\end{proof}

\section{Path partition conjecture}

In this section we examine Conjecture~\ref{ppc} for digraphs $D$ of the form
$H[\mathscr{X},\mathscr{Y},\mathscr{Z}]$. We will prove that if $H$ is semi-acyclic and
the substituted digraphs in $\mathscr{Z}$ satisfy the conjecture then $D$ 
satisfies the conjecture. First, we show that the conjecture holds for semi-acyclic
digraphs.

\begin{definition}\label{def:h}
Let $D$ be a semi-acyclic digraph. For each $v\in V(D)$, we define the {\em height function} $\Map{h}{V(D)}{\Nat}$ by
$$
h(v)=\max\Set{|P|}{P\text{ is a path of }D\text{ ending at }v},
$$
where $\Nat$ is the set of natural numbers. Thus $1\leq h(v)\leq\lambda(D)$.
\end{definition}

\begin{lemma}\label{lem:ha2}
Let $D$ be a semi-acyclic digraph and let $h$ be defined as in Definition~\ref{def:h}. Then the following statements hold:
\begin{enumerate}
\item If $uv\in E(D)$ and $vu\notin E(D)$, then $h(v)\geq h(u)+1$.
\item If $uv,vu\in E(D)$, then $|h(u)-h(v)|\leq 1$.
\item If $uv,vu\in E(D)$ and $h(u)=h(v)=t$, then every path of order $t$ ending at $u$ contains $v$, and in fact contains $v\rightarrow u$ as its final arc. Similarly, every path of order $t$ ending at $v$ contains $u$, and in fact contains $u\rightarrow v$ as its final arc.
\end{enumerate}
\end{lemma}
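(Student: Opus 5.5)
The basic tool for all three parts is to take a longest path ending at a given vertex and extend it by one arc. The only difficulty is that a path cannot repeat vertices. Throughout, ``path'' means a directed path with no repeated vertices. Semi-acyclicity gives exactly one fact about when the extension can fail: if $P$ is a path ending at $u$ that contains $v$, and $uv\in E(D)$, then the segment of $P$ from $v$ to $u$ together with the arc $uv$ forms a directed cycle. Since $D$ is semi-acyclic, this cycle must have length $2$. Hence the segment is the single arc $vu$, and $v$ is the penultimate vertex of $P$; moreover $vu\in E(D)$, so $\langle u,v\rangle$ is a $2$-cycle. I will use this observation in all three parts.

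For (1), let $uv\in E(D)$ with $vu\notin E(D)$, and let $P$ be a path of order $h(u)$ ending at $u$. If $v$ lay on $P$, the observation would force $vu\in E(D)$, which is a contradiction. So $P$ followed by the arc $uv$ is a path of order $h(u)+1$ ending at $v$, and $h(v)\ge h(u)+1$.

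For (2), suppose $uv,vu\in E(D)$, and by symmetry consider a path $P$ of order $h(u)$ ending at $u$. If $v\notin P$, then appending the arc $uv$ gives $h(v)\ge h(u)+1$. If $v\in P$, the observation says $v$ is the penultimate vertex of $P$. Deleting $u$ then leaves a path of order $h(u)-1$ ending at $v$, so $h(v)\ge h(u)-1$. Either way $h(v)\ge h(u)-1$. Exchanging the roles of $u$ and $v$ gives $h(u)\ge h(v)-1$, and together these give $|h(u)-h(v)|\le 1$.

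For (3), suppose $h(u)=h(v)=t$ and let $P$ be any path of order $t$ ending at $u$. If $v\notin P$, appending $uv$ would give $h(v)\ge t+1$, a contradiction. So $v\in P$, and by the observation $v$ is the penultimate vertex, that is, the final arc of $P$ is $v\to u$. The statement for paths ending at $v$ follows by symmetry. The only point requiring care is the cycle-length argument in the observation: because $D$ is semi-acyclic, every cycle has length exactly $2$. That argument is routine, so I expect no real obstacle in this lemma.
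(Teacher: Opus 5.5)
Your proof is correct and follows essentially the same argument as the paper. In each part you take a longest path ending at $u$ and extend it by $u\to v$ when $v$ is off the path. When $v$ is on the path, you use semi-acyclicity to force the closing cycle to be the $2$-cycle, so that $v$ immediately precedes $u$. Stating this once as an observation and reusing it in all three parts is only a tidier packaging of the paper's case analysis.
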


\begin{proof}
{\bf (1).}\ Let $P$ be a longest path ending at $u$. If $v\notin V(P)$, then appending the arc $u\rightarrow v$ gives a path ending at $v$ of order $h(u)+1$, and hence $h(v)\geq h(u)+1$. If $v\in V(P)$, then the segment of $P$ from $v$ to $u$, together with the arc $u\rightarrow v$, forms a cycle. Since $D$ is semi-acyclic and $vu\notin E(D)$, this is impossible. Thus necessarily $v\notin V(P)$, and the assertion follows.

\smallskip
{\bf (2).}\ Suppose $uv,vu\in E(D)$, i.e., $u\leftrightarrow v$. Let $P$ be a longest path ending at $u$. If $v\notin V(P)$, then appending $u\rightarrow v$ gives $h(v)\geq h(u)+1$ and in particular \(h(v)\ge h(u)-1\).

 If $v\in V(P)$, then the segment of $P$ from $v$ to $u$, together with $u\rightarrow v$, is a cycle. Since $D$ is semi-acyclic, this cycle must be the $2$-cycle $v\rightarrow u\rightarrow v$. Hence $v$ occurs immediately before $u$ in $P$, and the initial segment of $P$ ending at $v$ has order $h(u)-1$. Thus $h(v)\geq h(u)-1$. 

Therefore, in all cases, $h(v)\ge h(u)-1$, or equivalently, $h(u)-h(v)\le 1$.

Since the hypothesis \(uv,vu\in E(D)\) is symmetric in \(u\) and \(v\),
the same argument with the roles of \(u\) and \(v\) interchanged gives $h(v)-h(u)\le 1$. 
Hence we have
\[
        |h(u)-h(v)|\le 1.
\]

\smallskip
{\bf (3).}\ Assume $h(u)=h(v)=t$. Let $P$ be a path of order $t$ ending at $u$. If $v\notin V(P)$, then appending $u\rightarrow v$ gives a path of order $t+1$ ending at $v$, contradicting $h(v)=t$. Therefore $v\in V(P)$. As in (2), the segment of $P$ from $v$ to $u$, together with $u\rightarrow v$, must be the $2$-cycle itself. Hence $v\rightarrow u$ is the final arc of $P$. The proof for paths of order $t$ ending at $v$ is symmetric.
\end{proof}

Now we wish to explore the case (2) of Lemma~\ref{lem:ha2} in more detail.

\begin{lemma}\label{lem:unequal}
Let $D$ be a semi-acyclic digraph, and let $u\leftrightarrow v$ be a $2$-cycle in $D$ such that $h(u)\neq h(v)$. Let $P$ be a longest path in $D$ ending at $u$ with $v\notin V(P)$, and let $Q$ be a longest path in $D$ ending at $v$ with $u\notin V(Q)$. Then the following hold:
\begin{enumerate}
\item $h(v)=h(u)+1$ if and only if $|P|>|Q|$.
\item Suppose $h(v)=h(u)+1$. Let $R$ be a longest path in $D$ starting at $u$ with $v\notin V(R)$, and let $S$ be a longest path in $D$ starting at $v$ with $u\notin V(S)$. Then the path obtained from $P$ followed by $R$ is at least as long as the corresponding path obtained from $Q$ followed by the arc $v\rightarrow u$ and then by $R$. Moreover, the path obtained from $P$ followed by the arc $u\rightarrow v$ and then by $S$ is longer than the corresponding path obtained from $Q$ followed by $S$.
\end{enumerate}
\end{lemma}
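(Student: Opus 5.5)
We work in the setting of Lemma~\ref{lem:unequal} and first record the relation between $h(u),h(v)$ and $|P|,|Q|$. By the argument of Lemma~\ref{lem:ha2}(2), a longest path ending at $u$ either avoids $v$, or it contains $v$, in which case $v\rightarrow u$ is its last arc. In the latter case its initial segment ending at $v$ avoids $u$, because the only cycle through $u$ and $v$ is the $2$-cycle and $u$ occurs only once. This gives
\[
h(u)=\max\{|P|,\,|Q|+1\},\qquad h(v)=\max\{|Q|,\,|P|+1\}.
\]
This holds under the convention that $|P|$ and $|Q|$ are the maxima over the restricted families, so these maxima are well defined; each family contains the single-vertex path, so neither is empty.

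\textbf{Part (1).} Everything follows from the display by comparing the three possible orders of $|P|$ and $|Q|$.
If $|P|>|Q|$, then $|P|\ge |Q|+1$. Hence $h(u)=|P|$ and $h(v)=|P|+1$, so $h(v)=h(u)+1$.
If $|P|<|Q|$, the symmetric computation gives $h(u)=h(v)+1$.
If $|P|=|Q|$, then $h(u)=h(v)=|P|+1$. This is excluded by the hypothesis $h(u)\neq h(v)$.
So, given $h(u)\neq h(v)$, we have $h(v)=h(u)+1$ exactly when $|P|>|Q|$.

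\textbf{Part (2).} Assume $h(v)=h(u)+1$, so $|P|\ge|Q|+1$ by part (1). The comparisons should reduce to counting orders.
The walk $P$ followed by $R$ (sharing $u$) has order $|P|+|R|-1$.
The walk $Q$, then $v\rightarrow u$, then $R$ has order $|Q|+1+|R|-1$.
Since $|P|\ge|Q|+1$, the first is at least the second.
Likewise, $P$, then $u\rightarrow v$, then $S$ has order $|P|+|S|$, while $Q$ followed by $S$ has order $|Q|+|S|-1<|P|+|S|$.

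\textbf{Main obstacle.} The delicate point is that these concatenations must be genuine paths, or the statement must be read as a comparison of orders. Disjointness of $P$ (minus $u$) and $R$ (minus $u$) follows from semi-acyclicity: a common vertex $w$ would give a closed walk $u\rightsquigarrow w\rightsquigarrow u$. This walk contains a cycle, which must be a $2$-cycle, and the $2$-cycle through $u$ uses $v$. That is excluded because $v\notin V(P)\cup V(R)$, and one checks that no other cycle can arise. The same argument handles $Q,S$ and the mixed concatenations.

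In the mixed case, e.g.\ $Q$ through $v\rightarrow u$ then $R$, a repeated vertex $w$ yields a cycle through $w$, $v$ and $u$ of length at least $3$. This is impossible, since $u$ and $v$ lie only on their own $2$-cycle.

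I would write this disjointness check once, as a small claim about concatenating paths through a $2$-cycle, and then invoke it for all four concatenations.
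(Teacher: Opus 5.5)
Your proposal is correct and follows the paper's proof essentially step for step. Part (1) comes from the same identities $h(u)=\max\{|P|,|Q|+1\}$ and $h(v)=\max\{|Q|,|P|+1\}$, and part (2) is the same order count. The concatenations are justified as genuine paths by the same appeal to semi-acyclicity and vertex-disjointness of the $2$-cycles.

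The one thing to tighten is the vague ``one checks that no other cycle can arise.'' Take $w$ to be the first vertex of $R$ (resp.\ $S$) that lies on the other piece. Then the closed walk you form is a genuine cycle through $u$ (resp.\ through $u$, $v$ and $w$ in the mixed cases), which gives the contradiction directly.
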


\begin{proof}
{\bf (1).}\ We first note that any path ending at $u$ contains $v$ if and only if it contains the arc $v\rightarrow u$ as its final arc; this follows from the proof of Lemma~\ref{lem:ha2}(2). Similarly, any path ending at $v$ contains $u$ if and only if it contains the arc $u\rightarrow v$ as its final arc. Therefore, by Definition~\ref{def:h}, we have
$$
h(u)=\max\{|P|,|Q|+1\}
\qquad\text{and}\qquad
h(v)=\max\{|Q|,|P|+1\}.
$$
If $h(v)=h(u)+1$, then the above equalities imply $h(u)=|P|$ and $h(v)=|P|+1$. Hence $|P|\geq |Q|+1$, and so $|P|>|Q|$. Conversely, if $|P|>|Q|$, then $|P|\geq |Q|+1$, whence
$$
h(u)=|P|\qquad\text{and}\qquad h(v)=|P|+1.
$$
Thus $h(v)=h(u)+1$.

\smallskip
{\bf (2).}\ We first observe that the concatenations used below are indeed paths. For example, suppose that $P$ and $R$ have a common vertex $w\neq u$. Then the segment of $P$ from $w$ to $u$, followed by the segment of $R$ from $u$ to $w$, forms a  cycle. Since $D$ is semi-acyclic, this cycle must be a $2$-cycle. But it contains $u$, which already belongs to the $2$-cycle $u\leftrightarrow v$, while both $P$ and $R$ avoid $v$; this contradicts the vertex-disjointness of the $2$-cycles. The same argument applies to the other concatenations below.

Since $h(v)=h(u)+1$, part (1) gives $|P|>|Q|$. The path obtained by concatenating $P$ and $R$ has order $|P|+|R|-1$, while the corresponding path obtained by concatenating $Q$, the arc $v\rightarrow u$, and $R$ has order $|Q|+|R|$. Since $|P|>|Q|$, we have
$$
|P|+|R|-1\geq |Q|+|R|.
$$
Similarly, the path obtained by concatenating $P$, the arc $u\rightarrow v$, and $S$ has order $|P|+|S|$, whereas the path obtained by concatenating $Q$ and $S$ has order $|Q|+|S|-1$. Since $|P|>|Q|$, we have
$$
|P|+|S|>|Q|+|S|-1.
$$
This proves the two asserted comparisons.
\end{proof}

In the following we prove the path partition conjecture holds for semi-acyclic digraphs.

\begin{theorem}\label{thm:semi-acyclic-ppc}
Let \(D=(V,E)\) be a semi-acyclic digraph. Then \(D\) satisfies the
path partition conjecture.
\end{theorem}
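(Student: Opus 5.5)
The plan is to imitate the classical proof for acyclic digraphs, using the height function $h$ of Definition~\ref{def:h}. In the acyclic case one puts $A=\{v: h(v)\le\lambda_1\}$ and $B=V\setminus A$, and uses that $h$ strictly increases along every path. Here $h$ may stall or drop on the arc of a $2$-cycle, so the argument needs two extra ingredients: an inequality around each $2$-cycle, and one adjustment of the partition at the threshold height. The central claim to prove is that for every path $P=(p_1,\dots,p_m)$ in $D$ one can assign integer "slots" $s(p_1)<s(p_2)<\dots<s(p_m)$, with each $s(p_i)$ equal to $h(p_i)$ or $h(p_i)-1$. The value $h(p_i)-1$ is used only for the first of two consecutive vertices forming a $2$-cycle of equal height.

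\textbf{Step 1 (behaviour around a $2$-cycle).} Consider a $2$-cycle $u\leftrightarrow v$ on which $P$ uses the arc $u\to v$.
\begin{itemize}
\item By Lemma~\ref{lem:ha2}(1), every non-symmetric arc $a\to b$ has $h(b)\ge h(a)+1$. Since $2$-cycles are vertex-disjoint, the arc of $P$ entering $u$ and the arc leaving $v$ are non-symmetric.
\item If $h(v)=h(u)+1$, the heights are already strictly increasing at this point.
\item If $h(u)=h(v)+1$, let $w$ be the predecessor of $u$. Then the segment $w,u,v$ has $h(w)\le h(u)-1=h(v)$, which breaks monotonicity. I will handle this case by swapping the pair: assign slot $h(v)$ to $u$ and slot $h(u)$ to $v$. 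This works provided $h(w)\le h(v)-1=h(u)-2$ and $h(x)\ge h(u)+1$ for the successor $x$ of $v$. Both follow from the same argument as the equal case below.
\item If $h(u)=h(v)=t$, I claim $h(w)\le t-2$ and $h(x)\ge t+1$. Take a longest path $L$ ending at $w$. It cannot contain $u$ or $v$: otherwise $L$ followed by $w\to u$ (and possibly $u\to v$) would close a directed cycle of length at least $3$, or a cycle through $u$ other than $u\leftrightarrow v$. Both are excluded by semi-acyclicity. Hence $L,u,v$ is a path ending at $v$ of order $h(w)+2\le t$. Symmetrically, a longest path ending at $v$ avoids $x$, so $h(x)\ge t+1$.
\end{itemize}
With these inequalities, giving $u$ slot $t-1$ and $v$ slot $t$ keeps the slot sequence strictly increasing.

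\textbf{Step 2 (the partition).} Let $A$ consist of all vertices with $h\le\lambda_1$, together with one chosen vertex from each equal-height $2$-cycle at height exactly $\lambda_1+1$. Let $B=V\setminus A$.
\begin{itemize}
\item \emph{A path in $D[A]$.} Every vertex of $A$ except the chosen ones has slot in $[1,\lambda_1]$. A chosen vertex $u$ of height $\lambda_1+1$ has no out-neighbour in $A$: its partner is in $B$, and its other out-neighbours have height at least $\lambda_1+2$. So $u$ can only end an $A$-path. By Step 1 its in-neighbours in $A$ have height at most $\lambda_1-1$, so $u$ can be given slot $\lambda_1$. The slots still increase and lie in $[1,\lambda_1]$, which gives order at most $\lambda_1$.
\item \emph{A path in $D[B]$.} Every vertex of $B$ has height in $[\lambda_1+1,\lambda]$. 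The unchosen vertex $v$ of a threshold pair has no in-neighbour in $B$, since the others have height at most $\lambda_1-1$. So $v$ only starts $B$-paths and can keep slot $\lambda_1+1$. Any other equal-height pair inside $B$ has height $t\ge\lambda_1+2$, so its lowered slot $t-1$ stays at least $\lambda_1+1$. The slots therefore lie in $[\lambda_1+1,\lambda]$, which gives order at most $\lambda_2$.
\end{itemize}

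\textbf{Main obstacle.} The delicate part is the slot bookkeeping in Step 1, namely verifying the inequalities $h(w)\le t-2$ and $h(x)\ge t+1$ in every configuration. This includes the decreasing case, where the path uses the arc from the higher vertex to the lower one. That case relies on Lemma~\ref{lem:unequal}(1), which identifies which path realises each height. It must also be checked that these local swaps never collide with each other. This holds because consecutive $2$-cycle arcs on a path are separated by non-symmetric arcs, and $2$-cycles are vertex-disjoint.
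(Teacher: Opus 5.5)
There is a genuine gap in the decreasing case of Step~1, which you deferred as ``the same argument as the equal case''. Suppose \(P\) uses \(u\to v\) with \(h(u)=h(v)+1\). A longest path ending at \(v\) has order \(h(v)\), so appending the successor \(x\) of \(v\) only gives \(h(x)\ge h(v)+1=h(u)\). Nothing forces \(h(x)\ge h(u)+1\).

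Here is a concrete counterexample. Take vertices \(a,u,v,x\) with arcs \(a\to v\), \(u\to v\), \(v\to u\), \(v\to x\). This digraph is semi-acyclic, with \(h(a)=1\), \(h(v)=2\), \(h(u)=h(x)=3\) and \(\lambda(D)=3\).
\begin{itemize}
\item Along the path \((u,v,x)\) the heights are \(3,2,3\). No choice of slots from \(\{h,h-1\}\) is strictly increasing, so your central claim is false.
\item The partition of Step~2 also fails. For \(\lambda_1=1\), \(\lambda_2=2\) there is no equal-height \(2\)-cycle at height \(2\), so \(A=\{a\}\) and \(B=\{u,v,x\}\). Then \(D[B]\) contains \((u,v,x)\), of order \(3>\lambda_2\).
\end{itemize}
The paper's second example (\(q=3\), path \(6\to3\to7\to4\)) shows the same failure.

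The missing ingredient is the paper's Rule~II. In the decreasing case the slot of \(u\) must drop to \(h(v)-1=h(u)-2\). This is compatible with the predecessor \(w\) of \(u\): your own argument (append \(u,v\) to a longest path ending at \(w\)) gives \(h(w)\le h(v)-2\).

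However, that slot leaves the range \([\lambda_1+1,\lambda]\) exactly when \(h(v)=\lambda_1+1\) and \(h(u)=\lambda_1+2\). So for every \(2\)-cycle with heights \(\{\lambda_1+1,\lambda_1+2\}\), the endpoint of height \(\lambda_1+2\) must be moved into \(A\). In the example this gives \(A=\{a,u\}\), \(B=\{v,x\}\), which works.

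On the \(A\)-side you then need two facts about such a vertex:
\begin{itemize}
\item it only ends \(A\)-paths;
\item every path ending at it and avoiding its partner has order at most \(\lambda_1\), which is Lemma~\ref{lem:unequal}(1).
\end{itemize}
With Rule~II added and the slot rule corrected, your monotone-slot argument goes through. It then amounts to a reformulation of the paper's proof, which handles \(B\) by a concatenation claim rather than slots. Without Rule~II, the partition you define is not valid.
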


\begin{proof}
Let
\[
        m=\lambda(D),
\]
and let \(\lambda_1,\lambda_2\) be positive integers such that
\[
        \lambda_1+\lambda_2=m.
\]
Put \(q=\lambda_1\), so that \(\lambda_2=m-q\). Let \(h\) be the height
function defined in Definition~\ref{def:h}.

We first define a preliminary partition. Let
\[
        A_0=\{x\in V: h(x)\le q\}
\]
and
\[
        B_0=V\setminus A_0=\{x\in V: h(x)\ge q+1\}.
\]

We now modify this partition only at \(2\)-cycles near the boundary.

\textbf{Rule I.}

Let \(\mathcal C_1\) be the set of all \(2\)-cycles \(x\leftrightarrow y\)
such that
\[
        h(x)=h(y)=q+1.
\]
For each \(2\)-cycle \(C=\{x,y\}\in\mathcal C_1\), choose one endpoint (arbitrarily),
denoted \(c_A\), to be placed in \(A\), and let the other endpoint,
denoted \(c_B\), remain in \(B\).

\textbf{Rule II.}

Let \(\mathcal C_2\) be the set of all \(2\)-cycles
\(x\leftrightarrow y\) such that
\[
        \{h(x),h(y)\}=\{q+1,q+2\}.
\]
For each such \(2\)-cycle, we put the endpoint of height \(q+2\) into
\(A\), and the endpoint of height \(q+1\) into \(B\).

Thus applying Rule I and Rule II, we define
\[
        A=A_0
          \cup \{c_A: C\in\mathcal C_1\}
          \cup \{z: z\in C\in\mathcal C_2,\ h(z)=q+2\},
\]
and put
\[
        B=V\setminus A.
\]
We shall prove that
\[
        \lambda(D[A])\le q
        \qquad\text{and}\qquad
        \lambda(D[B])\le m-q.
\]

First we prove that \(\lambda(D[A])\le q\). Suppose, to the contrary,
that \(D[A]\) contains a path \(P\) of order \(q+1\). Let \(u\) be the
terminal vertex of \(P\). Since \(P\) is also a path of \(D\), we have
\[
        h(u)\ge q+1.
\]
On the other hand, by the construction of \(A\), every vertex of \(A\)
has height at most \(q+2\), and the only vertices of \(A\) having height
larger than \(q\) arise from the two boundary cases Rule I and Rule II above.

If \(h(u)=q+1\), then \(u\) was placed in \(A\) as one endpoint of a
\(2\)-cycle \(u\leftrightarrow v\) with
\[
        h(u)=h(v)=q+1.
\]
Then, by Lemma~\ref{lem:ha2}(3), every path of order \(q+1\) ending at \(u\)
contains \(v\), and in fact contains the arc \(v\to u\) as its final
arc. But \(v\in B\), contradicting \(P\subseteq D[A]\).

It remains to consider the case \(h(u)=q+2\). Then \(u\) was placed in
\(A\) because it lies in a \(2\)-cycle \(u\leftrightarrow v\) with
\[
        h(v)=q+1,\qquad h(u)=q+2.
\]
Then, by Lemma~\ref{lem:unequal}(1), applied with the roles of \(u\) and
\(v\) interchanged, every path ending at \(u\) and avoiding \(v\) has
order strictly smaller than \(q+1\).

Since \(v\in B\), the
path \(P\subseteq D[A]\) avoids \(v\), which contradicts \(|P|=q+1\).
Therefore
\[
        \lambda(D[A])\le q.
\]

It remains to prove that \(\lambda(D[B])\le m-q\).

\begin{claim}
Let \(R=(r_1,r_2,\ldots,r_s)\) be a path in \(D[B]\). Then \(D\)
contains a path of order at least \(q+s\).
\end{claim}

\begin{proof}[Proof of the claim]
Since \(r_1\in B\), we have \(h(r_1)\ge q+1\). Let \(P\) be a path of
order \(q+1\) ending at \(r_1\).

If \(P\) and \(R\) meet only at \(r_1\), then their concatenation is a
path of order
\[
        (q+1)+s-1=q+s,
\]
as required.

If the arc by which \(P\) enters \(r_1\) is not the reverse of the first
arc of \(R\), then no such obstruction occurs, and the preceding
concatenation works. In particular, this covers the case where \(P\)
enters \(r_1\) through an ordinary one-directional arc crossing from
\(A_0\) to \(B_0\).

Hence we need only consider the
case where the first arc of \(R\) is part of a \(2\)-cycle.

Suppose that \(P\) meets \(R\) at a vertex other than \(r_1\). Taking
the first such common vertex along \(R\), the corresponding segment of
\(P\) together with the initial segment of \(R\) forms a cycle. Since
\(D\) is semi-acyclic, this cycle must be a \(2\)-cycle. Thus the
obstruction can occur only when
\[
        r_1\leftrightarrow r_2
\]
and \(P\) uses the arc \(r_2\to r_1\) as its final arc.

Write
\[
        u=r_1,\qquad v=r_2.
\]
Thus \(R\) begins with \(u\to v\), while \(P\) ends with \(v\to u\).

First suppose that \(h(u)=h(v)=q+1\). Then \(u\leftrightarrow v\)
belongs to \(\mathcal C_1\), so Rule I splits \(u\) and \(v\)
between \(A\) and \(B\). This contradicts \(u,v\in B\).

Next suppose that
\[
        \{h(u),h(v)\}=\{q+1,q+2\}.
\]
Then \(u\leftrightarrow v\) belongs to \(\mathcal C_2\), so Rule II puts the endpoint of height \(q+2\) into \(A\). Again this
contradicts \(u,v\in B\).

It remains to consider the case
\[
        h(u)>q+1,\qquad h(v)>q+1.
\]
In this case the obstruction can be bypassed.

We shall use, in each of the following cases, the same observation as in
the proof of Lemma~\ref{lem:unequal}. The concatenations considered
below are indeed paths. For if the chosen initial path and the tail
\(r_3,r_4,\ldots,r_s\) had a common vertex, then the appropriate segment
of the initial path together with the corresponding segment of \(R\)
would form a cycle. Since \(D\) is semi-acyclic, this cycle would have to
be a \(2\)-cycle; but it would contain one of \(u\) or \(v\), already
belonging to the \(2\)-cycle \(u\leftrightarrow v\), a contradiction to
the vertex-disjointness of the \(2\)-cycles.

If \(h(u)=h(v)=t\), then by Lemma~\ref{lem:ha2}(3), there is a path
\(P_1\) of order \(t\) ending at \(v\), and \(P_1\) contains the arc
\(u\to v\) as its final arc. Since \(t>q+1\), we have \(t\ge q+2\).
Now concatenate \(P_1\) with the tail
\[
        r_3,r_4,\ldots,r_s
\]
of \(R\) using the arc $v\rightarrow r_3$ when $s\geq 3$. This gives a path of order
\[
        t+(s-2)\ge (q+2)+(s-2)=q+s.
\]

Now suppose \(h(v)>h(u)\). Since \(u\leftrightarrow v\), Lemma~\ref{lem:ha2}(2)
gives 
\(
        h(v)=h(u)+1.
\) 
By Lemma~\ref{lem:unequal}(1), there is a longest path \(P_1\) ending
at \(u\) and avoiding \(v\), with 
\(
        |P_1|=h(u)=h(v)-1.
\) 
We concatenate \(P_1\), the arc \(u\to v\), and the tail
\[
        r_3,r_4,\ldots,r_s.
\]
The resulting path has order
\[
        |P_1|+1+(s-2)=h(v)+s-2.
\]
Since \(h(u)>q+1\), we have \(h(v)>q+2\), and hence this order is at
least \(q+s\).

Finally suppose \(h(u)>h(v)\). Then 
\(
        h(u)=h(v)+1.
\) 
Again by Lemma~\ref{lem:unequal}(1), there is a longest path \(P_1\)
ending at \(v\) and avoiding \(u\), with 
\(
        |P_1|=h(v).
\) 
Concatenating \(P_1\) with the tail
\[
        r_3,r_4,\ldots,r_s
\]
using the arc $v\rightarrow r_3$ when $s\geq 3$ gives a path of order
\[
        h(v)+(s-2).
\]
Since \(h(v)>q+1\), this is at least
\[
        (q+2)+(s-2)=q+s.
\]

In each case, \(D\) contains a path of order at least \(q+s\). This
proves the claim.
\end{proof}

Now let \(R\) be a longest path in \(D[B]\). By the claim, \(D\) contains
a path of order at least
\[
        q+|R|.
\]
Since \(m=\lambda(D)\), we get 
\(
        q+|R|\le m.
\) 
Hence 
\(
        |R|\le m-q.
\) 
Therefore 
\(
        \lambda(D[B])\le m-q.
\) 

Combining the two inequalities, we have obtained a partition
\(V=A\cup B\) such that
\[
        \lambda(D[A])\le q=\lambda_1
\qquad 
and
\qquad 
        \lambda(D[B])\le m-q=\lambda_2.
\]
Thus \(D\) satisfies the path partition conjecture.
\end{proof}

\begin{example}\label{ex:rule-I-needed}
Let \(D\) be the digraph with vertex set
\[
        V(D)=\{1,2,\ldots,6\}
\]
and arc set
\[
E(D)=\{
1\to2,\ 1\to3,\ 2\to3,\ 3\to2,\ 2\to4,\ 3\to4,\ 
4\to5,\ 4\to6,\ 5\to6,\ 6\to5
\}.
\]
The only cycles of \(D\) are the two \(2\)-cycles
\[
        2\leftrightarrow 3,\qquad 5\leftrightarrow 6.
\]
which are vertex-disjoint. Thus \(D\) is semi-acyclic. The height values are
\[
\begin{array}{c|cccccc}
v&1&2&3&4&5&6\\ \hline
h(v)&1&3&3&4&6&6 .
\end{array}
\]
Hence \(\lambda(D)=6\).

The construction in the proof of Theorem~\ref{thm:semi-acyclic-ppc}
gives the following partitions.
\[
\begin{array}{c|c|c|c|c}
q & A & B & \lambda(D[A]) & \lambda(D[B])\\ \hline
1 &
\{1\} &
\{2,3,4,5,6\} &
1 & 5\\
2 &
\{1,2\} &
\{3,4,5,6\} &
2 & 4\\
3 &
\{1,2,3\} &
\{4,5,6\} &
3 & 3\\
4 &
\{1,2,3,4\} &
\{5,6\} &
4 & 2\\
5 &
\{1,2,3,4,5\} &
\{6\} &
5 & 1
\end{array}
\]
In each row,
\[
        \lambda(D[A])\le q,\qquad \lambda(D[B])\le 6-q.
\]

The case \(q=2\) illustrates Rule I. The preliminary height cut gives
\[
        A_0=\{1\},\qquad B_0=\{2,3,4,5,6\}.
\]
But \(D[B_0]\) contains the path
\[
        2\to3\to4\to5\to6,
\]
which has order \(5>6-2\). Thus the preliminary height cut fails.

Now the \(2\)-cycle
\[
        2\leftrightarrow3
\]
satisfies
\[
        h(2)=h(3)=3=q+1.
\]
By Rule I, the two endpoints are split between \(A\) and \(B\). Taking
\(2\in A\) and \(3\in B\), we get
\[
        A=\{1,2\},\qquad B=\{3,4,5,6\}.
\]
Then
\[
        \lambda(D[A])=2,\qquad \lambda(D[B])=4.
\]

We could also make the opposite choice in Rule I. Namely, instead of
putting \(2\) into \(A\), we may put \(3\) into \(A\) and leave \(2\) in
\(B\). Then
\[
        A=\{1,3\},\qquad B=\{2,4,5,6\}.
\]
In this case also,
\[
        \lambda(D[A])=2,\qquad \lambda(D[B])=4.
\]
Thus this example shows why Rule I is needed, and also shows that the
choice of $c_A$ in Rule I is arbitrary.
\end{example}

\begin{example}
Let \(D\) be the digraph with vertex set
\[
        V(D)=\{1,2,\ldots,8\}
\]
and arc set
{\small \[
E(D)=\{
1\to3,\ 1\to5,\ 2\to5,\ 2\to7,\ 3\to6,\ 3\to7,\ 3\to8,\ 
4\to7,\ 5\to7,\ 5\to8,\ 6\to3,\ 5\to1,\ 7\to4
\}.
\]}
The only cycles of \(D\) are the three \(2\)-cycles
\[
        1\leftrightarrow5,\qquad
        3\leftrightarrow6,\qquad
        4\leftrightarrow7,
\]
which are vertex-disjoint. Thus \(D\) is semi-acyclic. The height values are
\[
\begin{array}{c|cccccccc}
v&1&2&3&4&5&6&7&8\\ \hline
h(v)&3&1&4&6&2&5&5&5 .
\end{array}
\]
Hence \(\lambda(D)=6\).

The construction in the proof of Theorem~8.4 gives the following partitions.
\[
\begin{array}{c|c|c|c|c}
q & A & B & \lambda(D[A]) & \lambda(D[B])\\ \hline
1 &
\{1,2\} &
\{3,4,5,6,7,8\} &
1 & 4\\
2 &
\{2,5\} &
\{1,3,4,6,7,8\} &
2 & 4\\
3 &
\{1,2,5,6\} &
\{3,4,7,8\} &
3 & 3\\
4 &
\{1,2,3,4,5\} &
\{6,7,8\} &
4 & 1\\
5 &
\{1,2,3,5,6,7,8\} &
\{4\} &
5 & 1
\end{array}
\]
In each row,
\[
        \lambda(D[A])\le q,\qquad \lambda(D[B])\le 6-q.
\]

The case \(q=3\) is especially important. The preliminary height cut gives
\[
        A_0=\{1,2,5\},\qquad B_0=\{3,4,6,7,8\}.
\]
However, \(D[B_0]\) contains the path
\[
        6\to3\to7\to4,
\]
which has order \(4>6-3\). Thus the preliminary height cut fails.

Rule II detects the \(2\)-cycle
\[
        3\leftrightarrow6
\]
with
\[
        h(3)=4=q+1,\qquad h(6)=5=q+2.
\]
Therefore the endpoint of height \(q+2\), namely \(6\), is moved into
\(A\). This gives
\[
        A=\{1,2,5,6\},\qquad B=\{3,4,7,8\},
\]
and now
\[
        \lambda(D[A])=3,\qquad \lambda(D[B])=3.
\]
Thus this example shows why Rule II is necessary.
\end{example}

% Continuation of Section 8 after the definitions of semi-acyclic digraphs
% and the path partition conjecture have already been stated.

We now proceed for proving the path partition conjecture for the extension digraph with a semi-acyclic base.

\begin{definition}\label{def:extn}
Let $D = H[\mathscr{X},\mathscr{Y},\mathscr{Z}]$ be a digraph obtained from 
a semi-acyclic digraph $H$ by substituting bipartite digraphs $T_i$ with
bipartition $(X_i,Y_i)$, $i \in I$ for the 2-cycles $\langle x_i,y_i\rangle$ in $H$ and 
digraphs $Z_j, j \in J$ for the vertices $z_j$ not on 2-cycles.
We call each $Z_j$ an {\em ordinary block}, each $T_i$ a {\em bipartite block} and 
$X_i$ or $Y_i$ as a {\em port} of $T_i$.

For an ordinary block \(Z_j\), define
\[
        p_{z_j}^{in}
        =\max\{|P|: P\subseteq D-Z_j,
        \ \operatorname{ter}(P)\text{ has an arc into }Z_j\},
\]
where the maximum is taken to be \(0\) if no such path exists and $\operatorname{ter}(P)$ denotes the end or terminal vertex of a path $P$. Also define
\[
        p_{z_j}^{end}
        =\max\{|P|: P\text{ is a path in }D\text{ ending in }Z_j\}.
\]

For a bipartite block \(T_i=X_i\cup Y_i\), define
\[
        p_{X_i}^{in}
        =\max\{|P|: P\subseteq D-T_i,
        \ \operatorname{ter}(P)\text{ has an arc into }X_i\},
\]
where the maximum is taken to be \(0\) if no such path exists, and define
\(p_{Y_i}^{in}\) similarly. Also define
\[
        p_{X_i}^{end}
        =\max\{|P|: P\text{ is a path in }D\text{ ending in }X_i\},
\]
\[
        p_{Y_i}^{end}
        =\max\{|P|: P\text{ is a path in }D\text{ ending in }Y_i\}.
\]
Finally put
\[
        p_i^{in}=\max\{p_{X_i}^{in},p_{Y_i}^{in}\},
        \qquad
        p_i^{end}=\max\{p_{X_i}^{end},p_{Y_i}^{end}\}.
\]
\end{definition}

\begin{lemma}[Block-intersection lemma]\label{lem:block-intersection}
Let \(W\) be one of the substituted blocks of \(D\), that is, either
\(W=V(Z_j)\) for a vertex not on a 2-cycle, or \(W=V(T_i)\) for a
\(2\)-cycle block \(T_i\). If \(P\) is a path in \(D\), then the vertices
of \(P\) lying in \(W\) occur consecutively along \(P\). Consequently,
once \(P\) leaves \(W\), it cannot enter \(W\) again.
\end{lemma}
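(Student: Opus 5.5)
We argue by contradiction through the quotient map to $H$. Let $\pi\colon V(D)\to V(H)$ send each vertex of $Z_j$ to $z_j$, each vertex of $X_i$ to $x_i$, and each vertex of $Y_i$ to $y_i$. By the definition of $H[\mathscr{X},\mathscr{Y},\mathscr{Z}]$, every arc $ab$ of $D$ with $a,b$ in different blocks comes from a non-symmetric arc $\pi(a)\pi(b)$ of $H$. Arcs with both ends in the same block are either internal arcs of $Z_j$ or arcs of $T_i$ between $X_i$ and $Y_i$, and the latter project onto the $2$-cycle $\langle x_i,y_i\rangle$.

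Let $\sigma(W)$ denote the set of vertices of $H$ corresponding to the block $W$. Thus $\sigma(W)=\{z_j\}$ if $W=V(Z_j)$, and $\sigma(W)=\{x_i,y_i\}$ if $W=V(T_i)$. Suppose the vertices of $P$ in $W$ are not consecutive. Then $P$ contains a subpath $a=p_0,p_1,\dots,p_k=b$ with $a,b\in W$, $k\ge 2$, and $p_1,\dots,p_{k-1}\notin W$; take such a subpath with $k$ minimal.

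The arc $p_0p_1$ leaves $W$, so $\pi(p_0)\pi(p_1)$ is a non-symmetric arc of $H$. Likewise $\pi(p_{k-1})\pi(p_k)$ is a non-symmetric arc entering $\sigma(W)$. For each $1\le \ell\le k-2$, the arc $\pi(p_\ell)\pi(p_{\ell+1})$ is either an arc of $H$ or a degenerate step where the projection stays put: this happens inside one block, that is, inside a $Z_{j'}$ or between the two ports of a $T_{i'}$ with $W'\ne W$. Deleting the repeated consecutive images produces a closed directed walk in $H$ from $\pi(a)$ to $\pi(b)$ that lies outside $\sigma(W)$ except at its ends. If $\pi(a)\ne\pi(b)$, which happens only when $W=T_i$ with $a,b$ in different ports, we close it using the symmetric arc $\langle x_i,y_i\rangle$ (in the appropriate direction). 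In every case we obtain a closed directed walk in $H$ that passes through $\sigma(W)$, contains at least one vertex outside $\sigma(W)$, and uses at least one non-symmetric arc, namely $\pi(p_0)\pi(p_1)$.

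It remains to show that such a walk is impossible in a semi-acyclic digraph, and this is the main obstacle. A closed walk containing a non-symmetric arc $uv$ contains a directed cycle through $uv$: take the shortest portion of the walk from $v$ back to $u$ and extract a path, which yields a cycle using the arc $uv$. Since $vu\notin E(H)$, this cycle has length at least $3$, contradicting semi-acyclicity. The one point needing care is that extracting a path from a walk may delete the arc $uv$ itself. To avoid this, first shortcut the walk from $v$ to $u$ (going around the closed walk starting after $uv$) down to a path $v\rightsquigarrow u$. If that path contained $u$ earlier, we would simply stop there. Appending $uv$ then gives a cycle of length at least $2$, and it cannot be the $2$-cycle $\langle u,v\rangle$ because $uv$ is non-symmetric.

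Hence the vertices of $P$ in $W$ are consecutive. The final assertion follows immediately: if $P$ left $W$ and later re-entered it, the vertices of $P$ in $W$ would not be consecutive.
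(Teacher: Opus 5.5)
Your proof is correct and follows the same route as the paper: project the path to the skeleton $H$, and turn a departure from $W$ followed by a return to $W$ into a forbidden directed cycle. Using the non-symmetric exit arc $\pi(p_0)\pi(p_1)$ to force a cycle of length at least $3$ (after closing with the symmetric arc when $\pi(a)\neq\pi(b)$) gives the contradiction more cleanly than the paper's case analysis on $2$-cycles. There is one harmless slip: an arc of $T_{i'}$ projects to the genuine arc $x_{i'}y_{i'}$ or $y_{i'}x_{i'}$ of $H$, not to a degenerate step.
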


\begin{proof}
Suppose, to the contrary, that \(P\) leaves \(W\) and later enters \(W\)
again. Project the corresponding portion of \(P\) to the skeleton \(H\):
vertices of ordinary blocks \(Z_j\) are projected to \(z_j\), and vertices
of a bipartite block \(T_i=X_i\cup Y_i\) are projected to the
corresponding vertex \(x_i\) or \(y_i\). The projected walk starts in the
vertex, or in one of the two vertices, corresponding to \(W\), leaves it,
and later returns to it. Hence this directed walk contains a directed
cycle in \(H\).

Since \(H\) is semi-acyclic, this cycle must be a directed \(2\)-cycle.
However, the walk has left \(W\) and returned to it through vertices
outside \(W\). Thus the resulting cycle either has length greater than
\(2\), or is a \(2\)-cycle. In the latter case, if \(W=V(Z_j)\), then
\(z_j\) lies in a \(2\)-cycle of \(H\), contrary to the choice of \(z_j\). If
\(W=V(T_i)\), then this \(2\)-cycle shares a vertex with the original
\(2\)-cycle \(x_i\leftrightarrow y_i\), contradicting the
vertex-disjointness of the \(2\)-cycles.

Therefore \(P\) cannot leave \(W\) and later enter \(W\) again. Hence the
vertices of \(P\) lying in \(W\) occur consecutively along \(P\).
\end{proof}

\begin{lemma}\label{lem:pin-pend}
For every ordinary block \(Z_j\), 
\(
        0\le p_{z_j}^{in}<p_{z_j}^{end}.
\) 
For every bipartite block \(T_i=X_i\cup Y_i\), 
\(
        0\le p_{X_i}^{in}<p_{X_i}^{end},
        \ 
        0\le p_{Y_i}^{in}<p_{Y_i}^{end}.
\) 
Consequently,
\(
        0\le p_i^{in}<p_i^{end}.
\)
\end{lemma}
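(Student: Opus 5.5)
The argument will be a direct extension construction: take a path that witnesses $p^{in}$ and extend it by one vertex inside the block to produce a path ending in the block. Nonnegativity is immediate because the maxima are taken to be $0$ when empty. For the strict inequality, note first that every block and every port is nonempty, so $p^{end}\ge 1$ (a single vertex is a path). Hence if $p^{in}=0$ there is nothing more to show, and we may assume $p^{in}\ge 1$.

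\textbf{Ordinary blocks.} Let $P\subseteq D-Z_j$ be a path with $|P|=p_{z_j}^{in}$ whose terminal vertex $t$ has an arc into $Z_j$, say $t\to z$ with $z\in Z_j$. Since $P$ avoids $Z_j$ entirely, $z\notin V(P)$. So $P$ followed by the arc $t\to z$ is a path of order $p_{z_j}^{in}+1$ ending in $Z_j$. This gives $p_{z_j}^{end}\ge p_{z_j}^{in}+1$. (By the substitution structure, $t\longmapsto Z_j$ actually holds, but we only need one such arc.)

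\textbf{Bipartite blocks.} Let $P\subseteq D-T_i$ be a path with $|P|=p_{X_i}^{in}$ whose terminal vertex $t$ has an arc into $X_i$, say $t\to x$ with $x\in X_i$. Since $P$ avoids all of $T_i$, the vertex $x$ is not on $P$. Appending $x$ gives a path of order $p_{X_i}^{in}+1$ ending in $X_i$, so $p_{X_i}^{end}>p_{X_i}^{in}$. The same argument applies to $Y_i$.

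\textbf{Consequence for $p_i$.} Suppose $p_i^{in}=p_{X_i}^{in}$. Then $p_i^{in}<p_{X_i}^{end}\le p_i^{end}$, and the case $p_i^{in}=p_{Y_i}^{in}$ is symmetric.

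There is no real obstacle in this argument. The one point that must be checked is that the appended vertex is new to the path, and this is guaranteed because the definition of $p^{in}$ requires $P\subseteq D-W$ for the whole block $W$. Lemma~\ref{lem:block-intersection} is not needed here.
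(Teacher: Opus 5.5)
Your proposal is correct and follows the paper's proof essentially step for step. Both handle $p^{in}=0$ via nonemptiness of the block or port, extend a witness path by one new vertex of the block (new because the witness lies in $D-W$), and then deduce $p_i^{in}<p_i^{end}$ from the port inequalities. Your choice to append the specific head of the entering arc, rather than an arbitrary vertex of the block, is a harmless and slightly more careful version of the paper's step.
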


\begin{proof}
The non-negativity is clear from the definitions.

Let \(Z_j\) be an ordinary block. If \(p_{z_j}^{in}=0\), then
\(p_{z_j}^{end}\ge1\), since \(Z_j\ne\emptyset\). If \(p_{z_j}^{in}>0\), let
\(P\subseteq D-Z_j\) be a path of order \(p_{z_j}^{in}\) whose terminal vertex
has an arc into \(Z_j\). Since \(P\) is disjoint from \(Z_j\), we may
append one vertex of \(Z_j\) to \(P\). Thus 
\(
        p_{z_j}^{end}\ge p_{z_j}^{in}+1,
\) 
and hence \(p_{z_j}^{in}<p_{z_j}^{end}\).

The same argument applies to each port, namely, 
\(
        p_{X_i}^{in}<p_{X_i}^{end},
        \ 
        p_{Y_i}^{in}<p_{Y_i}^{end}.
\) 

Finally, if \(p_i^{in}=p_{X_i}^{in}\), then 
\[
        p_i^{end}
        =\max\{p_{X_i}^{end},p_{Y_i}^{end}\}
        \ge p_{X_i}^{end}
        >p_{X_i}^{in}
        =p_i^{in}.
\]
The case \(p_i^{in}=p_{Y_i}^{in}\) is symmetric. Hence
\(p_i^{in}<p_i^{end}\).
\end{proof}

\begin{definition}[The partition]\label{def:partition-substitution}
Let $D = H[\mathscr{X},\mathscr{Y},\mathscr{Z}]$ be a digraph as defined in Definition \ref{def:extn}. Fix an integer \(q\) with
\[
        1\le q<\lambda(D).
\]
For each ordinary block \(Z_j\), exactly one of the following three cases
holds:
\[
        p_{z_j}^{end}\le q,
        \qquad
        p_{z_j}^{in}\ge q,
        \qquad
        p_{z_j}^{in}<q<p_{z_j}^{end}.
\]
Define
\[
        S_0=\{{z_j}:p_{z_j}^{end}\le q\},
        \qquad
        R_0=\{{z_j}:p_{z_j}^{in}\ge q\},
        \qquad
        J_0=\{{z_j}:p_{z_j}^{in}<q<p_{z_j}^{end}\}.
\]
If \({z_j}\in S_0\), put \(V(Z_j)\subseteq A\). If \({z_j}\in R_0\), put
\(V(Z_j)\subseteq B\). If \({z_j}\in J_0\), set
\[
        q_{z_j}=q-p_{z_j}^{in}.
\]
Then \(1\le q_{z_j}<\lambda(Z_j)\). Indeed, \(p_{z_j}^{in}<q\) gives
\(q_{z_j}\ge1\). Also, by Lemma~\ref{lem:block-intersection}, every path
ending in \(Z_j\) has a final segment inside \(Z_j\) and an initial
segment outside \(Z_j\) of order at most \(p_{z_j}^{in}\). Hence
\[
        p_{z_j}^{end}\le p_{z_j}^{in}+\lambda(Z_j),
\]
and since \(q<p_{z_j}^{end}\), we get \(q_{z_j}<\lambda(Z_j)\). Since \(Z_j\)
satisfies the path partition conjecture, choose a partition
\[
        V(Z_j)=A_{z_j}\cup B_{z_j}
\]
such that
\[
        \lambda(Z_j[A_{z_j}])\le q_{z_j},
        \qquad
        \lambda(Z_j[B_{z_j}])\le \lambda(Z_j)-q_{z_j}.
\]
Put \(A_{z_j}\subseteq A\) and \(B_{z_j}\subseteq B\).\\[0.5em]

Now let \(T_i=X_i\cup Y_i\) be a bipartite block. If 
\(
        p_i^{end}\le q,
\) 
put \(V(T_i)\subseteq A\).\\[1em]
If
\(
        p_i^{in}<q<p_i^{end},
\) 
choose a port \(U_i\in\{X_i,Y_i\}\) such that 
\(
        p_{U_i}^{end}=p_i^{end}>q.
\) 
Let \(W_i\) be the other port, and put 
\(
        W_i\subseteq A,
        \ 
        U_i\subseteq B.
\)\\[1em]
Finally suppose 
\(
        p_i^{in}\ge q.
\)  

If 
\(
        p_{X_i}^{in}<q\le p_{Y_i}^{in},
\) 
put 
\(
        X_i\subseteq A,
        \ 
        Y_i\subseteq B.
\) 
If 
\(
        p_{Y_i}^{in}<q\le p_{X_i}^{in},
\) 
put 
\(
        Y_i\subseteq A,
        \ 
        X_i\subseteq B.
\)\\[1em]
If 
\(
        p_{X_i}^{in}\ge q
        \ \text{and}\ 
        p_{Y_i}^{in}\ge q,
\) 
put \(V(T_i)\subseteq B\).  
This completes the definition of \(A\) and \(B\).
\end{definition}

\begin{remark}\label{rem:genuine-concat}
In the proofs below, whenever we concatenate paths through a substituted
block \(W\), the outside pieces are vertex-disjoint. Indeed, if an
outside vertex occurred in two pieces, then, by taking the first such
repetition, the projected walk in \(H\) would leave \(W\) and later return
to it. Lemma~\ref{lem:block-intersection} then gives a forbidden directed
cycle in \(H\). Thus the displayed concatenations are genuine paths.
\end{remark}

Finally we prove Theorem \ref{main3} which is reiterated in the following form:

\begin{theorem}\label{thm:semi-acyclic-substitution-ppc}
Let $D = H[\mathscr{X},\mathscr{Y},\mathscr{Z}]$ be a digraph defined as in Definition \ref{def:extn}. 
Suppose that every ordinary block \(Z_j\) satisfies the path partition conjecture. 
Then \(D\) satisfies the path partition conjecture.
\end{theorem}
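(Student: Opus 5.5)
Fix $\lambda_1+\lambda_2=\lambda(D)$ and put $q=\lambda_1$, so $1\le q<\lambda(D)$. Take the partition $V=A\cup B$ from Definition~\ref{def:partition-substitution}. We show $\lambda(D[A])\le q$ and $\lambda(D[B])\le \lambda(D)-q$, mirroring the proof of Theorem~\ref{thm:semi-acyclic-ppc}. The tools are the block-intersection Lemma~\ref{lem:block-intersection}, which says every path meets each block in one consecutive segment, and Remark~\ref{rem:genuine-concat}, which says concatenations through a block are genuine paths. With these, a path decomposes into an ordered sequence of block-segments, and its order is the sum of the segment orders.

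\textbf{Bound on $A$.} Let $P$ be a path in $D[A]$ and let $W$ be the last block it meets. We bound $|P|$ by looking at this final segment.
\begin{itemize}
\item If $W=Z_j$ with $z_j\in S_0$, then $|P|\le p^{end}_{z_j}\le q$.
\item If $z_j\in J_0$, the prefix outside $Z_j$ has order at most $p^{in}_{z_j}$. The suffix lies in $Z_j[A_{z_j}]$, so it has order at most $q_{z_j}=q-p^{in}_{z_j}$. The total is at most $q$.
\item If $W=T_i$ with $p_i^{end}\le q$, the bound is immediate.
\item If $p^{in}_i<q<p^{end}_i$, only the port $W_i$ lies in $A$. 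Since $T_i$ is bipartite and $W_i$ is independent, $P$ meets $T_i$ in one vertex. So $|P|\le p^{in}_{W_i}+1\le p^{in}_i+1\le q$.
\item If $p^{in}_i\ge q$ and, say, $X_i\subseteq A$ with $p^{in}_{X_i}<q$, then again $|P|\le p^{in}_{X_i}+1\le q$.
\end{itemize}
One point needs care. The bound on the prefix uses that the prefix is a path in $D-T_i$ whose terminal vertex has an arc into the port. This holds because the prefix avoids $T_i$ by Lemma~\ref{lem:block-intersection}.

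\textbf{Bound on $B$.} As in the Claim of Theorem~\ref{thm:semi-acyclic-ppc}, we show that for any path $R$ in $D[B]$ of order $s$, $D$ contains a path of order at least $q+s$. Let $W$ be the first block $R$ meets, and let $R'$ be the initial segment of $R$ inside $W$. We will find a path $P$ in $D-W$, of suitable order, whose terminal vertex has an arc to the first vertex of $R$; then $P$ followed by $R$ is a path by Remark~\ref{rem:genuine-concat}.
\begin{itemize}
\item If $W=Z_j$ with $z_j\in R_0$, take $P$ of order $p^{in}_{z_j}\ge q$. Entry to any vertex of $Z_j$ is available because arcs into a substituted block go to all of it.
\item If $z_j\in J_0$, $R'$ lies in $Z_j[B_{z_j}]$, and this is the delicate case. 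A plain count gives only $p^{in}_{z_j}+|R'|$ where we need $q+|R'|$. So we must instead compare against a longest path inside $Z_j$ that passes through the $A_{z_j}/B_{z_j}$ split.

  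First try the following reduction. Any path ending in $Z_j$ splits as an outside part of order at most $p^{in}$ plus an inside part. Hence the relevant global quantity is $p^{in}_{z_j}+\lambda(Z_j)$, and we need $\lambda(D)\ge p^{in}_{z_j}+\lambda(Z_j)+(\text{longest continuation after }Z_j)$. Then $|R'|\le\lambda(Z_j)-q_{z_j}$ gives $p^{in}_{z_j}+\lambda(Z_j)+|R\setminus R'|\ge q+s$. This requires the maximal-in path, a longest path of $Z_j$, and the continuation to concatenate, which again follows from Remark~\ref{rem:genuine-concat} and the uniform arcs of substitution.
\item For a bipartite block with only $U_i\subseteq B$, $R'$ is a single vertex of $U_i$. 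Use a path of order $p^{end}_{U_i}-1\ge q$ entering $U_i$ from outside, or from $W_i$ through a vertex not on $R$.
\item When both ports lie in $B$, we have $p^{in}_{X_i},p^{in}_{Y_i}\ge q$. Enter through the port where $R'$ begins, with a prefix of order at least $q$.
\end{itemize}

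\textbf{Main obstacle.} I expect the hardest step to be making the $J_0$ case and the bipartite ``$U_i$'' case rigorous. There $R'$ must be handled through the longest path $\lambda(Z_j)$ or through $T_i$ internally, not by naive concatenation. In the bipartite case one must also check that a prefix reaching $p^{end}_{U_i}$ can end inside $T_i$ without meeting $R$. Finally, taking $R$ longest in $D[B]$ gives $q+\lambda(D[B])\le\lambda(D)$.
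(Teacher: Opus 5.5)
Your partition and your $A$-side argument are the paper's. On the $B$ side, your $R_0$, $J_0$ and two-ports-in-$B$ cases also match; the $J_0$ estimate via $\lambda(D)\ge p^{in}_{z_j}+\lambda(Z_j)+|R\setminus R'|$ is complete as written. The gap is in the middle bipartite case $p_i^{in}<q<p_i^{end}$, where you try to prepend to $r_1\in U_i$ a path of order $p^{end}_{U_i}-1\ge q$ that ends with an arc to $r_1$. This route fails in general, for two reasons.

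\begin{itemize}
\item The ``from outside'' branch never applies. Since $p^{in}_{U_i}\le p^{in}_i<q$, no path of order $\ge q$ in $D-T_i$ sends an arc into $U_i$. So a longest path ending in $U_i$ always arrives through $W_i$.
\item Arriving through $W_i$ does not give an arc to $r_1$. The predecessor in $W_i$ must be adjacent to the specific vertex $r_1$, and $T_i$ is an arbitrary bipartite digraph: its internal arcs are not uniform, unlike arcs between blocks.
\end{itemize}

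Here is a concrete example. Let $H$ consist of the $2$-cycle $\langle x_1,y_1\rangle$ together with arcs $z\to x_1$ and $y_1\to z'$. Take $X_1=\{x\}$, $Y_1=\{a,b\}$ and $E(T_1)=\{xa\}$. Then $\lambda(D)=4$, witnessed by the path $z,x,a,z'$. For $q=2$ the construction gives $A=\{z,x\}$ and $B=\{a,b,z'\}$. For $R=(b,z')$, the vertex $b$ has in-degree $0$, so nothing can be prepended at all.

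The missing idea is to replace $r_1$ rather than extend backwards from it. Because $W_i\subseteq A$ and $U_i$ is independent, $R$ meets $T_i$ only in $r_1$. Take a path $Q$ ending in $U_i$ of order $p^{end}_{U_i}=p^{end}_i\ge q+1$.
\begin{itemize}
\item If $s=1$, then $|Q|\ge q+s$ and you are done.
\item Otherwise $r_2\notin V(T_i)$ and $r_1\to r_2$. By the substitution rule, every vertex of $U_i$ dominates $r_2$, in particular the terminal vertex of $Q$.
\item Hence $Q r_2\cdots r_s$ is a path, with disjointness from Lemma~\ref{lem:block-intersection} and Remark~\ref{rem:genuine-concat}. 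Its order is $|Q|+s-1\ge q+s$.
\end{itemize}
In the example this yields $z,x,a,z'$. This is exactly how the paper handles the case.

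You should also separate this case from the one where $p_i^{in}\ge q$ and exactly one port, say $Y_i$, lies in $B$. There $p^{in}_{Y_i}\ge q$, so prepending an outside path into $Y_i$ does work. Your bullet ``only $U_i\subseteq B$'' currently conflates the two situations.
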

\begin{proof}
Fix an integer \(q\) with 
\(
        1\le q<\lambda(D).
\)
Define the partition \(V(D)=A\cup B\) as in Definition \ref{def:partition-substitution}.\\[0.5em]

We first show that 
\[
        \lambda(D[A])\le q.
\]

Let \(P\) be a path in \(D[A]\), and let \(u\) be its terminal vertex. We
consider the substituted block containing \(u\). 
First suppose \(u\in V(Z_j)\) for some ordinary block \(Z_j\). If
\({z_j}\in S_0\), then by definition
\[
        |P|\le p_{z_j}^{end}\le q.
\]
If \({z_j}\in J_0\), then Lemma~\ref{lem:block-intersection} implies that the
vertices of \(P\) lying in \(Z_j\) form a final segment of \(P\). Thus 
\(
        P=P'P_{z_j},
\) 
where \(P_{z_j}\subseteq Z_j[A_{z_j}]\). Hence
\[
        |P'|\le p_{z_j}^{in},
        \qquad
        |P_{z_j}|\le \lambda(Z_j[A_{z_j}])\le q_{z_j}.
\]
Therefore
\[
        |P|\le p_{z_j}^{in}+q_{z_j}=q.
\]
The case \({z_j}\in R_0\) cannot occur, because \(V(Z_j)\subseteq B\).\\[0.5em]

Now suppose \(u\in V(T_i)\), where \(T_i=X_i\cup Y_i\). If
\(p_i^{end}\le q\), then \(V(T_i)\subseteq A\), and hence
\[
        |P|\le p_i^{end}\le q.
\]
Suppose next that 
\(
        p_i^{in}<q<p_i^{end}.
\) 
By construction, 
\(
        W_i\subseteq A,
        \ 
        U_i\subseteq B.
\) 

Thus \(u\in W_i\). Since \(T_i\) is bipartite, there are no arcs inside
the port \(W_i\). Hence the final segment of \(P\) inside \(T_i\) consists
only of the vertex \(u\). Therefore
\[
        P=P'u,
\]
where \(P'\subseteq D-T_i\), and the terminal vertex of \(P'\), if
\(P'\ne\emptyset\), has an arc into \(W_i\). Hence
\[
        |P'|\le p_{W_i}^{in}\le p_i^{in}<q.
\]
Since \(|P'|\) is an integer, \(|P'|\le q-1\), and so
\[
        |P|=|P'|+1\le q.
\]

Finally suppose \(p_i^{in}\ge q\). If the construction puts one port in
\(A\), say \(X_i\subseteq A\) and \(Y_i\subseteq B\), then necessarily
\(p_{X_i}^{in}<q\). The same argument as above shows that any path in
\(D[A]\) ending in \(X_i\) has order at most \(q\). The case
\(Y_i\subseteq A\) is symmetric. If both port-in values are at least
\(q\), then \(V(T_i)\subseteq B\), so \(u\notin V(T_i)\).

Therefore every path in \(D[A]\) has order at most \(q\), and the claim
follows.\\[0.5em]

Next we prove that 
\[
        \lambda(D[B])\le \lambda(D)-q.
\]

Let
\[
        R=(r_1,r_2,\ldots,r_s)
\]
be a longest path in \(D[B]\). It is enough to show that \(D\) contains a
path of order at least \(q+s\). Let \(W\) be the substituted block
containing \(r_1\).

First suppose \(W=V(Z_j)\) for some ordinary block \(Z_j\). The case
\({z_j}\in S_0\) cannot occur, since \(V(Z_j)\subseteq A\). If \({z_j}\in R_0\),
then \(p_{z_j}^{in}\ge q\). Choose a path \(P\subseteq D-Z_j\) of order
\(p_{z_j}^{in}\) whose terminal vertex has an arc into \(Z_j\). Since arcs
into \(Z_j\) are complete by the definition of the substitution, \(P\)
can be followed by \(R\). By Remark~\ref{rem:genuine-concat}, this
concatenation is a path. Hence
\[
        \lambda(D)\ge |P|+|R|\ge q+s.
\]

Now suppose \({z_j}\in J_0\). By Lemma~\ref{lem:block-intersection}, the
vertices of \(R\) lying in \(Z_j\) form an initial segment of \(R\). Write
\[
        R=R_{z_j}R',
\]
where \(R_{z_j}\subseteq Z_j[B_{z_j}]\), and \(R'\) is the remaining tail,
possibly empty. Then
\[
        |R_{z_j}|\le \lambda(Z_j[B_{z_j}])\le \lambda(Z_j)-q_{z_j}.
\]
Choose a path \(P\subseteq D-Z_j\) of order \(p_{z_j}^{in}\) whose terminal
vertex has an arc into \(Z_j\), and let \(L_{z_j}\) be a longest path in
\(Z_j\). By the substitution rule, \(P\), \(L_{z_j}\), and \(R'\) can be
concatenated to form a path in \(D\); if \(R'\) is empty, the last piece
is omitted. Thus
\[
        \lambda(D)\ge p_{z_j}^{in}+\lambda(Z_j)+|R'|.
\]
Consequently,
\[
\begin{aligned}
q+s
&=q+|R_{z_j}|+|R'|\\
&\le q+\lambda(Z_j)-q_{z_j}+|R'|\\
&=q+\lambda(Z_j)-(q-p_{z_j}^{in})+|R'|\\
&=p_{z_j}^{in}+\lambda(Z_j)+|R'|\\
&\le \lambda(D).
\end{aligned}
\]

Now suppose \(W=V(T_i)\), where \(T_i=X_i\cup Y_i\). If
\(p_i^{end}\le q\), then \(V(T_i)\subseteq A\), so this case cannot
occur. 
Suppose first that 
\[
        p_i^{in}<q<p_i^{end}.
\]
By construction,
\[
        W_i\subseteq A,
        \qquad
        U_i\subseteq B,
        \qquad
        p_{U_i}^{end}=p_i^{end}>q.
\]
Since \(R\subseteq D[B]\) starts in \(T_i\), we have \(r_1\in U_i\).
Moreover, \(U_i\) is an independent set, because it is one part of the
bipartition of \(T_i\). Hence the initial segment of \(R\) inside \(T_i\)
consists only of \(r_1\).

Let \(Q\) be a path in \(D\) ending in \(U_i\) of order
\[
        p_{U_i}^{end}=p_i^{end}\ge q+1.
\]
If \(s=1\), then \(|Q|\ge q+1=q+s\), and we are done. If \(s\ge2\), then
\(r_2\notin V(T_i)\). Since \(r_1\to r_2\), the substitution rule implies
that every vertex of \(U_i\) has an arc to \(r_2\). Thus the terminal
vertex of \(Q\) has an arc to \(r_2\), and by
Remark~\ref{rem:genuine-concat}, 
\(
        Qr_2r_3\cdots r_s
\) 
is a path in \(D\). Hence
\[
        \lambda(D)
        \ge |Q|+(s-1)
        \ge (q+1)+(s-1)=q+s.
\]

Finally suppose
\[
        p_i^{in}\ge q.
\]
If
\[
        p_{X_i}^{in}<q\le p_{Y_i}^{in},
\]
then by construction \(X_i\subseteq A\) and \(Y_i\subseteq B\). Thus
\(r_1\in Y_i\). Choose a path \(P\subseteq D-T_i\) of order at least \(q\)
whose terminal vertex has an arc into \(Y_i\). By the substitution rule,
the terminal vertex of \(P\) has an arc to \(r_1\), so \(P\) can be
followed by \(R\). Hence \(D\) contains a path of order at least
\(q+s\). The case
\[
        p_{Y_i}^{in}<q\le p_{X_i}^{in}
\]
is symmetric.

It remains to consider the case
\[
        p_{X_i}^{in}\ge q
        \qquad\text{and}\qquad
        p_{Y_i}^{in}\ge q.
\]
Then \(V(T_i)\subseteq B\). If \(r_1\in X_i\), choose a path
\(P\subseteq D-T_i\) of order at least \(q\) whose terminal vertex has an
arc into \(X_i\). If \(r_1\in Y_i\), choose such a path entering \(Y_i\).
In either case, \(P\) can be followed by \(R\), giving a path in \(D\) of
order at least \(q+s\).

In every case, \(D\) contains a path of order at least \(q+s\). Hence
\(q+s\le\lambda(D)\). Since \(s=|R|=\lambda(D[B])\), we get
\[
        \lambda(D[B])\le \lambda(D)-q.
\]

Thus, we have the partition
\(V(D)=A\cup B\) satisfying
\[
        \lambda(D[A])\le q,
        \qquad
        \lambda(D[B])\le \lambda(D)-q.
\]
Since \(q\) was arbitrary, \(D\) satisfies the path partition conjecture.
\end{proof}

\begin{remark}[Recovery of Rules I and II]\label{rem:rules-recovered}
Let us check that the port assignment for a bipartite block really
contains the earlier Rules I and II as special cases. Suppose that
\(T_i=X_i\cup Y_i\) is a singleton \(2\)-cycle, say
\[
        X_i=\{x\},\qquad Y_i=\{y\},
\]
with both arcs \(x\to y\) and \(y\to x\). Then
\[
        p_{X_i}^{end}=h(x),\qquad p_{Y_i}^{end}=h(y).
\]
Moreover, \(p_{X_i}^{in}+1\) is the maximum order of a path ending at
\(x\) and avoiding \(y\), and \(p_{Y_i}^{in}+1\) is the maximum order of
a path ending at \(y\) and avoiding \(x\).

First suppose
\[
        h(x)=h(y)=q+1.
\]
By Lemma~\ref{lem:ha2}(3), every path of order \(q+1\) ending at
\(x\) contains \(y\), in fact with final arc \(y\to x\). Hence every
path ending at \(x\) and avoiding \(y\) has order at most \(q\). On the
other hand, taking a path of order \(q+1\) ending at \(y\) and deleting
its final vertex \(y\), we obtain a path of order \(q\) ending at \(x\)
and avoiding \(y\). Therefore 
\(
        p_{X_i}^{in}=q-1.
\) 
Similarly, 
\(
        p_{Y_i}^{in}=q-1.
\) 
Thus 
\(
        p_i^{in}=q-1<q<p_i^{end}=q+1.
\)
So this situation lies in the middle case of the port assignment. Since
both ports have end-value \(q+1>q\), either port may be chosen as the
\(B\)-port. Consequently the construction separates \(X_i\) and \(Y_i\)
arbitrarily between \(A\) and \(B\), exactly as in Rule I.

Next suppose
\[
        h(x)=q+1,\qquad h(y)=q+2.
\]
Then 
\(
        p_{X_i}^{end}=q+1,\qquad p_{Y_i}^{end}=q+2.
\) 
By Lemma~\ref{lem:unequal}, the longest path ending at \(x\) and
avoiding \(y\) has order \(q+1\). Hence 
\(
        p_{X_i}^{in}=q.
\) 
On the other hand, every path ending at \(y\) and avoiding \(x\) has
order at most \(q\), and therefore 
\(
        p_{Y_i}^{in}\le q-1.
\) 
Thus 
\(
        p_i^{in}=q\ge q,
\) 
and this case lies in the final port-in part of the construction. Since 
\(
        p_{Y_i}^{in}<q\le p_{X_i}^{in},
\) 
the construction puts 
\(
        Y_i\subseteq A,\qquad X_i\subseteq B.
\) 
That is, the endpoint with larger height \(q+2\) is placed in \(A\), and
the endpoint with smaller height \(q+1\) is placed in \(B\), exactly as
in Rule II. The case 
\(
        h(y)=q+1,\ h(x)=q+2
\)
is symmetric.

Thus the earlier Rules I and II are not extra assumptions in the
substitution theorem. They appear naturally as the singleton \(2\)-cycle
cases of the present port assignment. Notice also that there is no
contradiction with Lemma~\ref{lem:pin-pend}: in the unequal case above,
for instance,
\[
        p_{X_i}^{in}=q<q+1=p_{X_i}^{end},
        \qquad
        p_{Y_i}^{in}\le q-1<q+2=p_{Y_i}^{end}.
\]
\end{remark}

\begin{corollary} \cite{BGLL}
The path partition conjecture holds for 3-quasi-transitive digraph.
\end{corollary}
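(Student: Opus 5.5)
The plan is to obtain the corollary by combining the structural description of Theorem~\ref{3-quasi-trans} with the substitution result of Theorem~\ref{thm:semi-acyclic-substitution-ppc}.

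First, let $D$ be a 3-quasi-transitive digraph. By Theorem~\ref{3-quasi-trans}, there is a semi-acyclic (indeed 3-transitive) digraph $H$, namely the skeleton $D^\#$, such that $D=H[\mathscr{X},\mathscr{Y},\mathscr{Z}]$. Here each pair $(X_i,Y_i)$ spans a strong semicomplete bipartite digraph, substituted for a 2-cycle of $H$. Each $Z_j$ is a single vertex, a strong semicomplete digraph, or a copy of some $F_n$, substituted for a vertex of $H$ not on a 2-cycle. Thus $D$ has exactly the form of Definition~\ref{def:extn}, with bipartite blocks $T_i=X_i\cup Y_i$. The semi-acyclicity of $H$ is all that Theorem~\ref{thm:semi-acyclic-substitution-ppc} requires of the base; its 3-transitivity is not needed.

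Second, by Theorem~\ref{thm:semi-acyclic-substitution-ppc}, it remains only to check that each ordinary block $Z_j$ satisfies the path partition conjecture. Each $Z_j$ is a strong component of $D$, and an induced subdigraph of a 3-quasi-transitive digraph is again 3-quasi-transitive, since any directed path of length 3 inside $Z_j$ is a path in $D$ and the forced arc joins two vertices of $Z_j$. Hence each $Z_j$ is a strong 3-quasi-transitive digraph, and the theorem of Arroyo and Galeana-S\'anchez \cite{AG} says that such digraphs satisfy the conjecture. As a self-contained alternative, one can check the three types directly:
\begin{itemize}
\item a single vertex is trivial;
\item a strong semicomplete digraph is Hamiltonian \cite{CM}, so $\lambda(Z_j)=|Z_j|$, and any split of a Hamiltonian path into initial and final segments of orders $\lambda_1$ and $\lambda_2$ works, because every path in a part has order at most the size of that part;
\item for $F_n$, an explicit case check on the few values of $\lambda_1$ suffices.
\end{itemize}

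The main obstacle is the $F_n$ case if one does not simply quote \cite{AG}. There one must compute $\lambda(F_n)$, which I expect to be $5$, for example via the path $c_i\,d\,a\,b\,c_k$. One then exhibits suitable partitions for $\lambda_1=1,\dots,4$. My first attempt would take $A$ to be an initial set of vertices $\{a\}$, $\{a,b\}$, and so on, and check the longest paths in $B$ by hand, using that the $c_i$ are pairwise nonadjacent. Quoting \cite{AG} avoids this check entirely, and I would do that. Combining the two steps then gives the corollary.
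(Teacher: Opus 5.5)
Your proposal is correct and follows the paper's proof: Theorem~\ref{3-quasi-trans} writes $D=H[\mathscr{X},\mathscr{Y},\mathscr{Z}]$ over a semi-acyclic skeleton $H$, and Theorem~\ref{thm:semi-acyclic-substitution-ppc} then applies once the ordinary blocks $Z_j$ are known to satisfy the conjecture. Your argument for that last point is that each $Z_j$ is a strong component, hence a strong 3-quasi-transitive induced subdigraph, so \cite{AG} applies; this makes explicit what the paper states without justification.
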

\begin{proof}
Let $D$ be a 3-quasi-transitive digraph. Then by Theorem \ref{3-quasi-trans} 
$D = H[\mathscr{X},\mathscr{Y},\mathscr{Z}]$, that is, $D$ is obtained from a 
semi-acyclic 3-transitive digraph by substituting strong semi-complete bipartite 
digraphs, strong semicomplete digraphs or digraphs $F_n$. As the substituted digraphs 
satisfy the path partition conjecture, $D$ satisfies the conjecture by 
Theorem \ref{thm:semi-acyclic-substitution-ppc}.
\end{proof}

\begin{corollary}\label{cor:acyclic-composition-ppc} \cite{AG}
Let $D = H[S_1,S_2, \dots, S_r]$. Suppose that $H$ is acyclic and each $S_i$ satisfies 
the path partition conjecture. Then $D$ satisfies the path partition conjecture.
\end{corollary}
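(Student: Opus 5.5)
This follows directly from Theorem~\ref{thm:semi-acyclic-substitution-ppc}, once we observe that an acyclic digraph is a special case of a semi-acyclic one. Since $H$ is acyclic, it has no directed cycles at all, in particular no $2$-cycles. The defining conditions of semi-acyclicity are therefore satisfied trivially: there is no cycle of length other than $2$, and the (empty) family of $2$-cycles is pairwise vertex-disjoint. So $H$ is semi-acyclic with index set $I=\emptyset$, and every vertex $v_1,\dots,v_r$ of $H$ is a vertex not on a $2$-cycle.

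Next we match the two substitution operations. Put $J=\{1,\dots,r\}$, $z_j=v_j$, $Z_j=S_j$, and $\mathscr{X}=\mathscr{Y}=\emptyset$. We check that $H[\mathscr{X},\mathscr{Y},\mathscr{Z}]$ coincides with $H[S_1,\dots,S_r]$ as defined in the introduction. Both have vertex set $\bigcup_j V(S_j)$. Both contain the arcs $\bigcup_j E(S_j)$ inside the blocks. Between blocks, every arc $(v_i,v_j)$ of $H$ is non-symmetric because $H$ has no $2$-cycles. Hence in both constructions all arcs from $S_i$ to $S_j$ are added exactly when $(v_i,v_j)\in E(H)$. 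The only point needing care is this check that the ``non-symmetric arc'' clause covers every arc of $H$, and acyclicity guarantees it.

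Finally, each ordinary block $Z_j=S_j$ satisfies the path partition conjecture by hypothesis. Theorem~\ref{thm:semi-acyclic-substitution-ppc} therefore applies, and $D$ satisfies the path partition conjecture. The bipartite-block cases of that proof are vacuous here, since $I=\emptyset$. The argument involves no real obstacle: it is purely a specialization, and the only thing to verify is the identification of the two substitution constructions above.
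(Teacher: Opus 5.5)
Your proposal is correct and follows the paper's proof: both note that an acyclic $H$ is semi-acyclic with no $2$-cycles, so every block is ordinary and Theorem~\ref{thm:semi-acyclic-substitution-ppc} applies directly. Your explicit check that $H[\mathscr{X},\mathscr{Y},\mathscr{Z}]$ coincides with $H[S_1,\dots,S_r]$ is a detail the paper leaves implicit; it holds because every arc of an acyclic $H$ is non-symmetric.
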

\begin{proof}
Since $H$ is acyclic, it is semi-acyclic and contains no directed
\(2\)-cycle. Therefore, in the substitution of
Theorem~\ref{thm:semi-acyclic-substitution-ppc}, there are no bipartite
blocks. Every substituted block is an ordinary block.

Thus the hypotheses of Theorem~\ref{thm:semi-acyclic-substitution-ppc}
are satisfied, because each ordinary block \(S_i\) satisfies the
path partition conjecture. Hence $D$ satisfies the path partition conjecture.
\end{proof}

\section{Conclusion}

In this paper, we characterize $3$-transitive digraphs and $3$-quasi-transitive digraphs as certain extensions over semi-acyclic base graphs. As an application, we proved the path partition conjecture for an extension of semi-acyclic digraphs where strong components obey the conjecture. This generalizes some known results on the conjecture. Following these constructions and methodology one may try to verify the conjecture, in general, for digraphs whose strong components satisfy path partition conjecture.

{\bf Acknowlegement}: We would like to thank Cesar~Hern$\acute{\text{a}}$ndez-Cruz for 
his interest and suggestions which led to the study of the path partition conjecture.

\bibliographystyle{amsplain}

\begin{thebibliography}{1}

\renewcommand{\baselinestretch}{1}

\bibitem{AG} A.~Arroyo and H. Galeana-S\'anchez,
\emph{The path partition conjecture is true for some generalizations of tournaments},
Discrete Math., \textbf{313} (2013) 293--300.

\bibitem{JBJ}
J.~Bang-Jensen, \emph{The structure of strong arc-locally semicomplete digraphs}, Discrete Mathematics, \textbf{283} (2004) 1--6.

\bibitem{BH} 
J.~Bang-Jensen and J.~Huang, \emph{Quasi-transitive digraphs}, 
J. Graph Theory \textbf{20} (1995) 141–-161.

\bibitem{BGLL} 
J.~Bo, Z.~Gao, X.~Lian, and J.~Liu, 
\emph{Some digraph classes that meet the directed path partition conjecture},
Discrete Applied Math., \textbf{372} (2025) 260--267.

\bibitem{CM}
P.~Camion, \emph{Chemins et circuits hamiltoniens des graphes complets}, C. R. Acad. Sci. Paris, \textbf{249} (1959), 2151--2152.

\bibitem{DGGS}
S.~Das, P.~Ghosh, S.~Ghosh and S.~Sen, \emph{Oriented bipartite graphs and the Goldbach graph}, Discrete Mathematics, \textbf{344} (2021), 112497.

%\bibitem{Frick} M.~Frick, \emph{A survey on the path partition conjecture},
%Discussiones Math., \textbf{33} (2-13) 117--131.

\bibitem{gallai} T. Gallai, \emph{Transitiv orientierbare graphen},
{Acta Math. Acad. Sci. Hungar} \textbf{18} (1967) 25--66.

\bibitem{ghouila} A. Ghouil\` a-Houri, \emph{Caract\' erisation des 
graphes non orient\` es dont on peut orienter les arr\^ etes 
de maniere \` a obtenir le graphe d'un relation d'ordre}, 
{C. R. Acad. Sci. Paris} \textbf{254} (1962) 1370--1371. 

\bibitem{HH-lex} P. Hell and J. Huang, \emph{Lexicographic orientation and representation
algorithms for comparability graphs, proper circular arc graphs, and proper interval 
graphs}, {J. Graph Theory}, \textbf{20} (1995) 361--374.
 
\bibitem{GGU}
H.~Galeana-S$\acute{\text{a}}$nchez, I.A.~Goldfeder and I.~Urrutia, \emph{On the structure of strong 3-quasi-transitive digraphs}, Discrete Mathematics, \textbf{310} (2010), 2495--2498.

\bibitem{HC}
C.~Hern$\acute{\text{a}}$ndez-Cruz, \emph{$3$-transitive digraphs}, Discussiones Mathematicae Graph Theory \textbf{32} (2012) 205--219.

\bibitem{H} J.~Huang, \emph{Lexicographic orientation algorithms for orientation 
completion problems}, Chapter in Classes of Digraphs (J.~Bang-Jensen and G.~Gutin eds),
Springer (2018).


\end{thebibliography}

\end{document}